\documentclass{amsart}
\usepackage{amssymb, amsmath, amsthm, graphics, comment, xspace, enumerate}
\usepackage{hyperref}
\usepackage{color}
\baselineskip 18pt

\vfuzz2pt 
\hfuzz2pt 
\newtheorem{thm}{Theorem}[section]

\newtheorem{prop}[thm]{Proposition}
\theoremstyle{definition}
\newtheorem{defn}[thm]{Definition}
\newtheorem{example}[thm]{Example}
\theoremstyle{remark}
\newtheorem{rem}[thm]{Remark}
\numberwithin{equation}{section}


\begin{document}
\title[Abstract non-scalar Volterra difference equations]{Abstract non-scalar Volterra difference equations}

\author{Marko Kosti\' c}
\address{Faculty of Technical Sciences,
University of Novi Sad,
Trg D. Obradovi\' ca 6, 21125 Novi Sad, Serbia}
\email{marco.s@verat.net}

{\renewcommand{\thefootnote}{} \footnote{2020 {\it Mathematics
Subject Classification.} 39A05, 39A06, 39A24, 42A75, 47D99.
\\ \text{  }  \ \    {\it Key words and phrases.} Abstract non-scalar Volterra difference equations, abstract multi-term fractional difference equations, Weyl fractional derivatives, Poisson transform, almost periodic solutions.
\\  \text{  }
This research is partially supported by grant 451-03-68/2020/14/200156 of Ministry
of Science and Technological Development, Republic of Serbia.}}

\begin{abstract}
In this paper, we
investigate the abstract non-scalar Volterra difference equations. We employ the Poisson like transforms to connect the solutions of the abstract non-scalar Volterra integro-differential equations and the abstract non-scalar Volterra difference equations. We also investigate the existence, uniqueness and almost periodicity of solutions to the abstract multi-term fractional difference equations with Weyl fractional derivatives.
\end{abstract}
\maketitle

\section{Introduction and preliminaries}

The theory of abstract Volterra integro-differential equations is still a very active field of research of many mathematicians. Fractional calculus and fractional differential equations, which have emerged as the most important and prominent areas of applied mathematics in the last three decades or so, can be viewed as a special part of this theory (cf. the monographs \cite{knjiga,knjigaho,FKP,prus} and references cited therein for more details on the subject).

Discrete fractional calculus, discrete fractional equations and discrete Volterra equations  have received much attention recently, as well (cf. the  monographs \cite{abbasi} by S. Abbas et al., \cite{annaby} by M. H. Annaby, Z. S. Mansour, \cite{fere} by R. A. C. Ferreira, \cite{dfc} by
C. Goodrich, A. C. Peterson and the research articles \cite{abadias,abadias1,alvarez,atici1,atici2,atici3,cao,diblik}, \cite{mihal,rich-lizama,hamad,whe,holm,afdi,lizama-nach,lizama-pois} for some recent results obtained in this direction). Discrete fractional calculus is incredibly important in modeling of various real phenomena appearing in the theories of neural networks, complex dynamic systems, frequency response analysis, image processing and interval-valued systems, e.g..

On the other hand,
the abstract higher-order difference equations have been considered in Chapter 13 of the monograph \cite{gil} by M. I. Gil, where it has been assumed that all operator coefficients are bounded linear operators.
In a series of our recent research papers, we have initiated the study of various classes of the abstract difference equations with closed linear operators or multivalued linear operators. For example, in a joint research study \cite{afdi} with  H. C. Koyuncuo\u{g}lu and D. Velinov,
we
have considered the abstract fractional difference
inclusion
\begin{align}\label{obor}
\Delta^{\alpha}_{W}u(k)\in {\mathcal A}u(k+1)+f(k),\quad k\in {\mathbb Z},
\end{align}
where ${\mathcal A}$ is a multivalued linear operator on a complex Banach space $(X,\|\cdot\|)$, $0<\alpha \leq 1$ and $\Delta^{\alpha}_{W}u(k)$ denotes the Weyl fractional
difference operator of order $\alpha$; concerning the existence and uniqueness of almost periodic solutions and almost automorphic solutions of the abstract fractional 
difference equation \eqref{obor} in which ${\mathcal A}=A$ is a bounded linear operator, we would like to mention here the important research articles of C. Lizama and his co-authors \cite{abadias,abadias1,alvarez,afdi}. Further on, in our recent research article \cite{avdi}, we have analyzed the existence and uniqueness of asymptotically almost periodic solutions of the following abstract Volterra difference inclusion
\begin{align*}
u(v)\in f(v)+{\mathcal A}\bigl(a \ast_{0} u\bigr)(v),\quad v\in {\mathbb N}_{0},
\end{align*}
where ${\mathcal A}$ is a multivalued linear operator in $X,$ $(a(v))_{v\in {\mathbb N}_{0}}$ and $(f(v))_{v\in {\mathbb N}_{0}}$ are given sequences in $X.$ In the same paper, we have considered the existence and uniqueness of  almost periodic type solutions and almost automorphic type solutions to
the following abstract Volterra difference inclusion
\begin{align*}
u(k+1)\in {\mathcal A}\sum_{j=-\infty}^{k}a(k-j)u(j+1)+\sum_{j=-\infty}^{k}b(k-j)f(k),\quad k\in {\mathbb Z},
\end{align*}
where ${\mathcal A}$ is a multivalued linear operator in $X,$ $(a(k))_{k\in {\mathbb N}_{0}}$ and $(b(k))_{k\in {\mathbb N}_{0}}$ are given sequences in $X;$
cf. also the research article \cite{keyantuo} by V. Keyantuo et al. for the first steps made in this direction. We have introduced the notion of a discrete $(a,k)$-regularized $C$-resolvent family and provided several results about the generation of discrete $(a,k)$-regularized $C$-resolvent families therein; for more details about the existence and uniqueness of almost periodic type solutions to the abstract Volterra functional-differential-difference equations, we refer the reader to the forthcoming monograph \cite{funkcionalne}. 

Concerning the abstract non-scalar Volterra integral equations, mention should be made of the important monograph \cite{prus} by J. Pr\"uss and the monographs \cite{knjiga,knjigaho,FKP} by M. Kosti\' c. The main aim of this paper is to examine various classes of the abstract non-scalar Volterra difference equations. We use the Poisson like transforms to make a connection between the solutions of the abstract non-scalar Volterra integro-differential equations and the abstract non-scalar Volterra difference equations. 
We consider here the abstract degenerate Volterra difference equation
\begin{equation}\label{prckodem}
Bu(v)=f(v)+\sum_{j=0}^{v}A(v-j)u(j),\quad v\in {\mathbb N}_{0},
\end{equation}
where $f: {\mathbb N}_{0} \rightarrow X$, $B$ is a closed linear operator on $X$ 
and $A: {\mathbb N}_{0}\rightarrow L(Y,X)$, where $Y$ is a Banach space which is continuously embedded in $X,$ as well as the nonautonomous generalization of \eqref{prckodem}. We thoroughly analyze the well-posedness of \eqref{prckodem} in the following particular case:
\begin{align}\label{ces}
A(v)=a_{1}(v)A_{1}+...+a_{n}(v)A_{n},\quad v\in {\mathbb N}_{0},
\end{align}
where $n\in {\mathbb N},$ $A_{1},...,A_{n}$ are closed linear operators on $X,$ and $(a_{1}(v))_{v\in {\mathbb N}_{0}},...,$ $(a_{n}(v))_{v\in {\mathbb N}_{0}}$ are given sequences in $X.$ 
The equation \eqref{prckodem} is a special case of the equation
\begin{equation}\label{prckodem1}
Bu(v)=f(v)+\sum_{i=1}^{n}\sum_{j=0}^{v+v_{i}}A_{i}(v+v_{i}-j)u(j),\quad v\in {\mathbb N}_{0},
\end{equation}
where $v_{1},...,v_{n}\in {\mathbb N}_{0}$ and $A_{1}: {\mathbb N}_{0}\rightarrow L(Y,X),...,\ A_{n}: {\mathbb N}_{0}\rightarrow L(Y,X).$ We will not consider the well-posedness of problem \eqref{prckodem1} in general case and
we will only focus our attention here to the case in which the kernel $(A_{i}(v))_{v\in {\mathbb N}_{0}}$ can be expressed as $A_{i}(\cdot):=a_{i}(\cdot)A_{i},$ $1\leq i\leq n.$ 

In \cite{multi-term},
we have recently analyzed the
abstract multi-term fractional difference equations with Riemann-Liouville and Caputo fractional derivatives. In this paper, we continue the above-mentioned research study by investigating the abstract multi-term fractional difference equations with Weyl fractional derivatives. In contrast to \cite{multi-term}, the domain of solutions of the considered fractional difference equations is the whole integer line ${\mathbb Z};$ moreover, we do not accompany  the considered fractional difference equations with any initial conditions.
Here, we also want to emphasize that the use of sequences $a_{1}(\cdot)=k^{m_{1}-\alpha_{1}}(\cdot),...,$ $a_{n}(\cdot)=k^{m_{n}-\alpha_{n}}(\cdot)$ in \eqref{ces}, where $(k^{\alpha}(v))_{v\in {\mathbb N}_{0}}$ is the C\`esaro sequence, plays a crucial role in our analysis; cf. Section \ref{rema-weyl} for the notion and more details.

The structure of this paper can be briefly described as follows. After explaining the basic notation and terminology used in the paper, we recall the basic facts about the abstract Volterra integral equations of non-scalar type in Subsection \ref{popij}. Discrete $(A,k,B)$-regularized $C$-resolvent families are investigated in Section \ref{seljace}; in Section \ref{debil}, we introduce and systematically analyze various notions of the $(k,C,B,(A_{i})_{1\leq i\leq n})$-solution  operator families connected with the use of kernel $(A(v))_{v\in {\mathbb N}_{0}}$ given by \eqref{ces}. The asymptotically almost periodic type solutions of the abstract multi--term discrete abstract Cauchy problem
\begin{align*}
Bu(v)=f(v)+\sum_{i=1}^{n}A_{i}\bigl(a_{i}\ast_{0} u\bigr)(v+v_{i}),\quad v\in {\mathbb N}_{0},
\end{align*}
where $v_{1},...,v_{n}\in {\mathbb N}_{0},$ are analyzed in Subsection \ref{dacp}.

The abstract multi-term fractional difference equations with Weyl fractional derivatives are investigated in Section \ref{rema-weyl};
here, we also investigate the well-posedness of the following abstract multi-term difference equation
\begin{align*}
Bu(v)=A_{1}\sum_{l=-\infty}^{v+v_{1}}a_{1}(v+v_{1}-l)u(l)+...+A_{n}\sum_{l=-\infty}^{v+v_{n}}a_{n}(v+v_{n}-l)u(l),\quad v\in {\mathbb Z},
\end{align*}
where $B,\ A_{1},...,\ A_{n}$ are closed linear operator on $X.$ In particular, we consider the existence of solutions to the problems
\begin{align*}
\bigl( \Delta^{m} Bu\bigr)(v)&=A_{1}\Bigl(\Delta^{\alpha_{1}}_{W}u\Bigr)(v+v_{1})+...
\\& +A_{n}\Bigl(\Delta^{\alpha_{n}}_{W}u\Bigr)(v+v_{n})+\Delta^{m}(k \circ Cf)(v)+\Delta^{m}g(v),\ v\in {\mathbb Z}
\end{align*}
and
\begin{align*}
B\bigl( \Delta^{m_{n}} h\bigr)(v)&=\sum_{j=1}^{n-1}A_{j}\Bigl(\Delta^{m_{n}-m_{j}}\Delta^{\alpha_{j}}_{W}h\Bigr)(v+v_{j})
\\&+A_{n}\Bigl(\Delta^{\alpha_{n}}_{W}h\Bigr)(v+v_{n})+(k \circ Cf)(v)+g(v),\ v\in {\mathbb Z};
\end{align*}
cf. Theorem \ref{zajeb} for the notation and more details. It is also worthwhile to mention that we connect the solutions of the abstract multi-term fractional differential equation
$$
A_{n}D_{t}^{\alpha_{n}}u(t)+A_{n-1}D_{t}^{\alpha_{n-1}}u(t)+...+A_{1}D_{t}^{\alpha_{1}}u(t)=0,\quad t>0,
$$
where $0\leq \alpha_{1}<\alpha_{2}<...<\alpha_{n}$ and $D_{t}^{\alpha}u(t)$ denotes the Riemann-Liouville fractional derivative of function $u(t)$ of order $\alpha>0,$ with the solutions of the abstract multi-term fractional difference equation
\begin{align*}
A_{n}\Bigl[\Delta^{\alpha_{n}}_{W}u\Bigr](v)&+A_{n-1}\Bigl[\Delta^{\alpha_{n-1}}_{W}u\Bigr]\bigl(v+m_{n}-m_{n-1}\bigr)
\\&+...+A_{1}\Bigl[\Delta^{\alpha_{1}}_{W}u\Bigr]\bigl(v+m_{n}-m_{1}\bigr)=-g(v),\quad v\in {\mathbb Z};
\end{align*}
cf. Example \ref{kucanje} for more details. We also consider certain connections between the abstract multi-term fractional differential equations with Caputo derivatives and the abstract multi-term fractional difference equation of the above form; we particularly analyze the existence of (almost periodic type) solutions to the problem
$$
\Bigl[\Delta^{\alpha}_{W}u\Bigr](v)+Au(v+m)=-g_{\omega}(v),\quad v\in {\mathbb Z},
$$
where $\alpha>0$ and $m=\lceil \alpha \rceil;$ cf. Example \ref{kucanie} for more details.
All considered Volterra difference equations and fractional difference equations can be unsolvable with respect to the highest derivative.
The final section of paper is reserved for the conclusions and final remark about the topics under our consideration.
\vspace{0.1cm}

\noindent {\bf Notation and terminology.} In the sequel, we will always assume that $X$ and $Y$ are two complex Banach spaces such that $Y$ is continuously embedded in $X$ and $C\in L(X).$ If $C$ is injective, then $[R(C)]$ denotes the Banach space $R(C)$ equipped with the norm $\|x\|_{R(C)}:=\|C^{-1}x\|_X$, $x\in R(C)$;
the norm in $X$, resp. $Y$, is denoted by $\|\cdot\|_X$, resp. $\|\cdot\|_Y.$ By the norm continuity we mean the continuity in $L(X)$ and, in many places, we do not distinguish $S(\cdot)$ $(U(\cdot))$ and its restriction to $Y$; $[D(B)] $ denotes the Banach space $D(B)$ equipped with the graph norm. For the notion and more details about the multivalued linear operators, we refer the rader to \cite{FKP}. \index{norm continuity}

The Gamma function is denoted by
$\Gamma(\cdot)$ and the principal branch is always used to take
the powers. Define $g_{\zeta}(t):=t^{\zeta-1}/\Gamma(\zeta)$ and
$0^{\zeta}:=0$ ($\zeta>0,$ $t>0$); $\lceil s\rceil :=\inf\{k\in {\mathbb Z} : k\geq s\}$ ($s\in {\mathbb R}$). Set ${\mathbb N}_{k}:=\{1,....,k\}$ and
${\mathbb N}_{k}^{0}:=\{0,1,....,k\}$ for $k\in {\mathbb N}.$
If the sequences $(a_{k})_{k\in {\mathbb N}_{0}}$ and $(b_{k})_{k\in {\mathbb N}_{0}}$ are given, then we define $(a\ast_{0}b)(\cdot)$ by
$
(a\ast_{0}b)(k):=\sum_{j=0}^{k}a_{k-j}b_{j},\ k\in {\mathbb N}_{0}.
$
Let us recall that the convolution product $\ast_{0}$ is commutative and associative. Further on, if $\alpha >0,$ then the Ces\`aro sequence $
(k^{\alpha}(v))_{v\in {\mathbb N}_{0}}$ is defined by
$
k^{\alpha}(v):=\Gamma(v+\alpha)/(\Gamma(\alpha)v!).
$
It is well-known that, for every $\alpha>0$ and $\beta>0$, we have $k^{\alpha}\ast_{0} k^{\beta}\equiv k^{\alpha+\beta}$ and $| k^{\alpha}(v)-g_{\alpha}(v)|=O(g_{\alpha}(v)|1/v|)$, $v\in {\mathbb N};$ in particular, $ k^{\alpha}(v)\sim g_{\alpha}(v),$ $v\rightarrow +\infty.$ 
Define $k^{0}(0):=1$ and $k^{0}(v):=0,$ $v\in {\mathbb N};$ then  $k^{\alpha}\ast_{0} k^{\beta}\equiv k^{\alpha+\beta}$ for all $\alpha,\ \beta\geq 0.$ 

If the sequences $(a_{k})_{k\in {\mathbb N}_{0}}$ and $(b_{k})_{k\in {\mathbb Z}}$ are given, then we define the Weyl convolution product $(a\circ b)(\cdot)$ by
$$
(a\circ b)(v):=\sum_{l=-\infty}^{v}a(v-l)b(l),\quad v\in {\mathbb Z}.
$$
Due to \cite[Theorem 3.12(ii)-(iii)]{keyantuo}, some logical assumptions ensure that (cf. also the proof of Theorem \ref{zajeb} below):
\begin{align*}
\bigl( f\ast_{0}g\bigr) \circ h =g\circ (f \circ h)=f \circ (g \circ h).
\end{align*}

Suppose that $\alpha>0$, $m=\lceil \alpha \rceil$ and $I=(0, \infty).$ 
The Riemann-Liouville fractional integral of order $\alpha>0$ is defined for any function $f\in L^{1}(I : X),$ by
$
J_t^\alpha f(t):= \bigl(g_\alpha*f\bigr)(t),$ $ t>0.
$
The Riemann-Liouville fractional derivative of order $\alpha>0$ is
defined for any function $f\in L^{1}(I : X)$ with $g_{m
-\alpha}\ast f\in W^{m ,1}(I: X),$ by
\begin{eqnarray*}
D_t^\alpha f(t):=\frac{d^m}{dt^m} \bigl(g_{m-\alpha}*f\bigr)(t)
= D_t^m J_t^{m-\alpha}f(t),\quad t>0,
\end{eqnarray*}
while the Caputo fractional derivative\index{fractional derivatives!Caputo} $\mathbf D_t^{\alpha}u(t)$ is defined for any function $u\in C^{m-1}([0,\infty):X)$ for which $g_{m-\alpha}*(u-\sum_{k=0}^{m-1}u_kg_{k+1})\in C^m([0,\infty):X)$, by
\[
\mathbf D_t^{\alpha}u(t):=\frac{d^m}{dt^m}\bigg[g_{m-\alpha}*\bigg(u-\sum_{k=0}^{m-1}u_kg_{k+1}\bigg)\bigg].
\]

\subsection{Abstract non-scalar Volterra integral equations}\label{popij}

Let $A(t)$ be a locally integrable function from $[0,\tau)$ into $L(Y,X)$. We need to recall the notion introduced in \cite[Definition 2.9.1-Definition 2.9.3]{FKP}:

\begin{defn}\label{def2.1}
Let $k\in C([0,\tau))$ and $k\neq0$, let $\tau\in(0,\infty]$, $f\in C([0,\tau):X)$, and let $A\in L_{loc}^1([0,\tau):L(Y,X))$.
Of concern is the following degenerate Volterra equation:
\begin{equation}\label{1}
Bu(t)=f(t)+\int^t_0A(t-s)u(s)\, ds,\quad t\in[0,\tau).
\end{equation}\index{abstract degenerate non-scalar Volterra equations}
Then a function $u\in C([0,\tau):[D(B)])$ is said to be a strong solution of \eqref{1} if and only if $u\in L^{\infty}_{loc}([0,\tau):Y)$ and \eqref{1} holds on $[0,\tau)$.
\end{defn}

In the following definition, we introduce the main solution concepts for dealing with \eqref{1}:

\begin{defn}\label{def2.2}
Let $\tau\in(0,\infty]$, $k\in C([0,\tau))$, $k\neq0$ and $A\in L_{loc}^1([0,\tau):L(Y,X))$.
A family $(S(t))_{t\in[0,\tau)}$ in $L(X,[D(B)])$ is said to be an $(A,k,B)$-regularized $C$-pseudoresolvent family if and only if the following holds:\index{$(A,k,B)$-regularized $C$-(pseudo)resolvent family}
\begin{itemize}
\item[(S1)$_{c}$] The mappings $t\mapsto S(t)x$, $t\in[0,\tau)$ and $t\mapsto BS(t)x$, $t\in[0,\tau)$ are continuous in $X$ for every $x\in X$, $BS(0)=k(0)C$ and $S(t)C=CS(t)$, $t\in[0,\tau)$.
\item[(S2)] Set $U(t)x:=\int^t_0S(s)x\,ds$, $x\in X$, $t\in[0,\tau)$.
    Then (S2) means $U(t)Y\subseteq Y$, $U(t)_{\mid Y}\in L(Y)$, $t\in[0,\tau)$ and $(U(t)_{\mid Y})_{t\in[0,\tau)}$ is locally Lipschitz continuous in $L(Y)$.\index{function!Lipschitz continuous}
\item[(S3)] The resolvent equations
\begin{equation}\label{3}
BS(t)y=k(t)Cy+\int^t_0A(t-s)dU(s)y,\quad t\in[0,\tau),\ y\in Y,
\end{equation}
\begin{equation}\label{4}
BS(t)y=k(t)Cy+\int^t_0S(t-s)A(s)y\,ds,\quad t\in[0,\tau),\ y\in Y,
\end{equation}
hold; \eqref{3}, resp. \eqref{4}, is called the first resolvent equation, resp. the second resolvent equation.
\end{itemize}
An $(A,k,B)$-regularized $C$-pseudoresolvent family $(S(t))_{t\in[0,\tau)}$ is said to be an $(A,k,B)$-regularized $C$-resolvent family if additionally:\index{$(A,k,B)$-regularized $C$-resolvent family}
\begin{itemize}
\item[(S4)] For every $y\in Y$, $S(\cdot)y\in L^{\infty}_{loc}([0,\tau):Y)$.
\end{itemize}
An operator family $(S(t))_{t\in[0,\tau)}$ in $L(X,[D(B)])$ is called a weak $(A,k,B)$-reg\-ularized $C$-pseudoresolvent family if and only if (S1)$_{c}$ and\index{$(A,k,B)$-regularized $C$-(pseudo)resolvent family!weak} \eqref{4} hold.
\end{defn}

The condition (S3) can be rewritten in the following equivalent form:
\begin{align*}
\begin{split}
\text{(S3)'}\qquad\qquad &BU(t)y=\Theta(t)Cy+\int^t_0A(t-s)U(s)y\,ds,\quad t\in[0,\tau),\ y\in Y,\\
&BU(t)y=\Theta(t)Cy+\int^t_0U(t-s)A(s)y\,ds,\quad t\in[0,\tau),\ y\in Y.
\end{split}
\end{align*}

We also need the following notion:

\begin{defn}\label{uniq-prim}
Let $\tau\in(0,\infty]$, $k\in C([0,\tau))$, $k\neq0$ and $A\in L_{loc}^1([0,\tau):L(Y,X))$.
A strongly continuous operator family $(V(t))_{t\in[0,\tau)}\subseteq L(X)$ is said to be an $(A,k,B)$-regularized $C$-uniqueness family if and only if\index{$(A,k,B)$-regularized $C$-uniqueness family}
\[
V(t)By=k(t)Cy+\int^t_0V(t-s)A(s)y\,ds,\quad t\in[0,\tau),\ y\in Y\cap D(B).
\]
\end{defn}

\section{Discrete $(A,k,B)$-regularized $C$-resolvent families}\label{seljace}

Suppose that $B(k)$ is a closed linear operator acting in $X$ ($k\in {\mathbb N}_{0}$),
and $A: {\mathbb N}_{0}\rightarrow L(Y,X)$. We would like to introduce the following notion:

\begin{defn}\label{def2.1}
\begin{itemize}
\item[(i)] Let $f: {\mathbb N}_{0} \rightarrow X$.
Then it is said that a sequence $(u_{k})_{k\in {\mathbb N}_{0}}$ is a solution of the abstract degenerate Volterra difference equation
\begin{equation}\label{123123}
B(v)u(v)=f(v)+\sum_{j=0}^{v}A(v-j)u(j),\quad v\in {\mathbb N}_{0}
\end{equation}\index{abstract degenerate non-scalar Volterra difference equations}
if and only if $u(j)\in D(B(j)) \cap Y$ for all $j,\ v\in {\mathbb N}_{0}$,  and \eqref{123123} holds.
\item[(ii)] Let $k: {\mathbb N}_{0} \rightarrow {\mathbb C}$ and $k\neq 0.$ Then a family $(S(v))_{v\in {\mathbb N}_{0}}$ in $L(X)$ is said to be a discrete $(A,k,B)$-regularized $C$-resolvent family if and only if $S(v)C=CS(v)$, $v\in {\mathbb N}_{0}$ and the following holds:\index{discrete $(A,k,B)$-regularized $C$-resolvent family}
\begin{itemize}
\item[(S1)] $S(v)Y\subseteq Y$, $S(v)_{\mid Y}\in L(Y)$, $v\in {\mathbb N}_{0}$,
\begin{equation}\label{3123}
B(v)S(v)y=k(v)Cy+\sum_{j=0}^{v}A(v-j)S(j)y,\quad v\in {\mathbb N}_{0},\ y\in Y,
\end{equation}
and
\begin{equation}\label{4123}
B(v)S(v)y=k(v)Cy+\sum_{j=0}^{v}S(v-j)A(j)y,\quad v\in {\mathbb N}_{0},\ y\in Y;
\end{equation}
\eqref{3123}, resp. \eqref{4123}, is called the first resolvent equation, resp. the second resolvent equation.
\end{itemize}
An operator family $(S(v))_{v\in {\mathbb N}_{0}}$ in $L(X)$ is called a discrete weak $(A,k,B)$-regularized $C$-resolvent family if and only if $S(v)C=CS(v)$, $v\in {\mathbb N}_{0}$, $S(v)Y\subseteq Y$, $S(v)_{\mid Y}\in L(Y)$, $v\in {\mathbb N}_{0}$ and\index{discrete $(A,k,B)$-regularized $C$-(pseudo)resolvent family!weak} \eqref{4123} hold.
\item[(iii)] Let $k: {\mathbb N}_{0} \rightarrow {\mathbb C}$ and $k\neq 0.$ 
Then a strongly continuous operator family $(W(v))_{v\in {\mathbb N}_{0}}\subseteq L(X)$ is said to be a discrete $(A,k,B)$-regularized $C$-uniqueness family if and only if, for every $v\in {\mathbb N}_{0},$ we have \index{discrete $(A,k,B)$-regularized $C$-uniqueness family}
\[
W(v)B(v)y=k(v)Cy+\sum_{j=0}^{v}W(v-j)A(j)y,\quad y\in Y\cap D(B(v)).
\]
\end{itemize}
\end{defn}

The notion introduced in Definition \ref{def2.1}(ii)-(iii) has not been considered elsewhere even in the case that $B={\rm I}.$
Further on,
it is clear that the assumptions $S(v)Y\subseteq Y$, $S(v)_{\mid Y}\in L(Y)$, $v\in {\mathbb N}_{0}$ and $A(j)S(v)y=S(v)A(j)y$ for all $j,\ v\in {\mathbb N}_{0}$ and $y\in Y$ ensure that \eqref{4123} implies \eqref{3123}. We feel it is our duty to say that the equation \eqref{3123} is only an unsatisfactory attempt to provide a discrete analogue of the first resolvent equation \eqref{3} as well as that the last mentioned equation can be also considered in the nonautonomous setting (wth certain difficulties).

In the following proposition, we analyze the uniqueness of solutions to \eqref{123123}:

\begin{prop}\label{profice}
Suppose that a sequence $(u_{k})_{k\in {\mathbb N}_{0}}$ is a solution of \eqref{123123} and the operator $B(v)-A(0)$ is injective for all $v\in {\mathbb N}_{0}.$ Then $u(\cdot)$ is uniquely determined.
\end{prop}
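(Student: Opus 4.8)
The plan is to argue by induction on $v\in {\mathbb N}_{0}$, after first reducing to the homogeneous problem via linearity. Suppose $(u_{k})_{k\in {\mathbb N}_{0}}$ and $(\tilde{u}_{k})_{k\in {\mathbb N}_{0}}$ are two solutions of \eqref{123123} for the same $f$, and set $w(v):=u(v)-\tilde{u}(v)$ for $v\in {\mathbb N}_{0}$. Since each solution satisfies $u(j),\tilde{u}(j)\in D(B(j))\cap Y$ and these sets are subspaces, we have $w(j)\in D(B(j))\cap Y$ as well; because $B(v)$ is linear and $A(v-j)\in L(Y,X)$, subtracting the two copies of \eqref{123123} (the term $f(v)$ cancels) yields the homogeneous identity
\begin{equation*}
B(v)w(v)=\sum_{j=0}^{v}A(v-j)w(j),\quad v\in {\mathbb N}_{0}.
\end{equation*}
It therefore suffices to prove that $w(v)=0$ for every $v\in {\mathbb N}_{0}$.

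The key algebraic step is to split off the top index $j=v$ from the convolution sum. Writing the summand at $j=v$ as $A(0)w(v)$ and moving it to the left-hand side gives
\begin{equation*}
\bigl(B(v)-A(0)\bigr)w(v)=\sum_{j=0}^{v-1}A(v-j)w(j),\quad v\in {\mathbb N}_{0},
\end{equation*}
where for $v=0$ the right-hand side is the empty sum and hence equals $0$. This identity is meaningful: since $w(v)\in D(B(v))\cap Y$, the vector $B(v)w(v)\in X$ is defined, and $A(0)w(v)\in X$ is defined because $A(0)\in L(Y,X)$ and $w(v)\in Y$. Thus $B(v)-A(0)$ acts as a well-defined (possibly unbounded) linear operator on $D(B(v))\cap Y$, and the injectivity hypothesis is to be read precisely on this domain. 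Isolating $A(0)w(v)$ in this way is exactly what makes the injectivity of $B(v)-A(0)$, rather than of $B(v)$ alone, the right assumption.

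Now I run the induction. For the base case $v=0$, the displayed identity reads $\bigl(B(0)-A(0)\bigr)w(0)=0$, so the injectivity of $B(0)-A(0)$ forces $w(0)=0$. For the inductive step, assume $w(0)=\cdots=w(v-1)=0$ for some $v\in {\mathbb N}$; then every summand on the right-hand side vanishes, leaving $\bigl(B(v)-A(0)\bigr)w(v)=0$, and the injectivity of $B(v)-A(0)$ yields $w(v)=0$. This closes the induction and shows $u(\cdot)=\tilde{u}(\cdot)$. I do not expect a genuine obstacle here: the argument is a clean triangular (Volterra) induction, and the only point requiring care is the bookkeeping described above, namely interpreting $B(v)-A(0)$ as an operator on $D(B(v))\cap Y$ so that the injectivity assumption is precisely the ingredient converting the vanishing of past values into the vanishing of $w(v)$.
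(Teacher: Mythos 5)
Your proof is correct and follows essentially the same route as the paper: reduce to the homogeneous equation by linearity, peel off the $j=v$ term $A(0)w(v)$ from the convolution sum, and run the triangular induction using injectivity of $B(v)-A(0)$ at each step. Your version merely makes explicit the domain bookkeeping ($w(v)\in D(B(v))\cap Y$) that the paper's shorter argument leaves implicit.
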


\begin{proof}
Suppose that $f\equiv 0$ in \eqref{123123}. Then it is clear that \eqref{123123} with $v=0$ gives $B(0)u(0)=A(0)u(0),$ so that $u(0)$ is uniquely determined. If $v=1,$ then we get
$B(1)u(1)=A(1)u(0)+A(0)u(1),$ so that $u(1)$ is uniquely determined, as well.
Proceeding in this way and using the injectiveness of the operator $B(v)-A(0)$ for all $v\in {\mathbb N}_{0},$ we simply obtain the required statement. 
\end{proof}

Further on, if $B(v)\equiv B,$ $g: {\mathbb N}_{0} \rightarrow {\mathbb C}$ and $g\neq 0,$ then we define $S_{g}(v)x:=(g\ast_{0} S)(v)x,$ $v\in {\mathbb N}_{0}$ and $x\in X.$ It can be simply shown that, if $(S(v))_{v\in {\mathbb N}_{0}}\subseteq L(X)$ is a discrete (weak) $(A,k,B)$-regularized $C$-resolvent family [$(W(v))_{v\in {\mathbb N}_{0}}\subseteq L(X)$ is a discrete $(A,k,B)$-regularized $C$-uniqueness family], then $(S_{g}(v))_{v\in {\mathbb N}_{0}}\subseteq L(X)$ [$(W_{g}(v))_{v\in {\mathbb N}_{0}}\subseteq L(X)$] is a discrete (weak) $(A,k\ast_{0}g,B)$-regularized $C$-resolvent family [discrete $(A,k\ast_{0}g,B)$-regularized $C$-uniqueness family]. 

Concerning the notion of a discrete $(A,k,B)$-regularized $C$-uniqueness family, we would like to state the following result (cf. also \cite[Proposition 2.9.5(i)]{FKP}; the interested reader may try to transfer the statement of \cite[Proposition 2.9.4(i)]{FKP} to the discrete solution operator families, as well):

\begin{prop}\label{ref}
Suppose that $k: {\mathbb N}_{0} \rightarrow {\mathbb C}$, $k\neq 0$, a sequence $(u_{k})_{k\in {\mathbb N}_{0}}$ is a solution of the abstract degenerate Volterra difference equation \eqref{123123}
and a strongly continuous operator family $(W(v))_{v\in {\mathbb N}_{0}}\subseteq L(X)$ is a discrete $(A,k,B)$-regularized $C$-uniqueness family.
\begin{itemize}
\item[(i)] Then $W(0)f(0)=k(0)Cu(0).$
\item[(ii)] If $B(v)\equiv B,$ then we have $(kC\ast_{0} u)(v)=(W\ast_{0}f)(v)$ for all $v\in {\mathbb N}_{0}.$ In particular, if $k(0)\neq 0$ and $C$ is injective, then there exists at most one solution of \eqref{123123}.
\end{itemize}
\end{prop}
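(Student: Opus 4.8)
The plan is to prove the two parts directly from the defining identity of a discrete $(A,k,B)$-regularized $C$-uniqueness family together with the solution identity \eqref{123123}, using the algebraic convenience of the convolution product $\ast_{0}$. The central idea is to form the $\ast_{0}$-convolution of the uniqueness family $(W(v))$ with the solution $(u_{k})$ and with the forcing term $(f(v))$, and then to rearrange the double sums so that the defining relation of $W$ cancels against the relation satisfied by $u$. I expect that the main technical point will be the interchange and reindexing of the nested finite sums; this is routine but must be done carefully because the operators $B(v)$, $A(j)$ and $W(v-j)$ do not in general commute, so the order of the factors has to be preserved throughout.

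For part (i), I would simply evaluate the defining identity of $W(\cdot)$ and the solution identity \eqref{123123} at $v=0$. Setting $v=0$ in \eqref{123123} gives $B(0)u(0)=f(0)+A(0)u(0)$, hence $(B(0)-A(0))u(0)=f(0)$; and setting $v=0$ in the defining relation of the uniqueness family (with $y=u(0)\in Y\cap D(B(0))$, which is legitimate since $u(0)\in D(B(0))\cap Y$ by the definition of a solution) gives $W(0)B(0)u(0)=k(0)Cu(0)+W(0)A(0)u(0)$. Subtracting, $W(0)\bigl(B(0)-A(0)\bigr)u(0)=k(0)Cu(0)$, and combining with $\bigl(B(0)-A(0)\bigr)u(0)=f(0)$ yields $W(0)f(0)=k(0)Cu(0)$, which is exactly the claim. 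The only subtlety is making sure the argument $u(0)$ is an admissible test element for $W$, and this is guaranteed by the solution definition.

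For part (ii), assuming $B(v)\equiv B$, the strategy is to compute $\bigl(W\ast_{0}Bu\bigr)(v)$ in two ways. On one hand, applying the defining identity $W(v)By=k(v)Cy+\sum_{j=0}^{v}W(v-j)A(j)y$ and convolving appropriately, one obtains a term $k(0)C$-type contribution together with a double convolution $\bigl(W\ast_{0}(A\ast_{0}u)\bigr)$. On the other hand, the solution satisfies $Bu(v)=f(v)+\bigl(A\ast_{0}u\bigr)(v)$, so $\bigl(W\ast_{0}Bu\bigr)(v)=\bigl(W\ast_{0}f\bigr)(v)+\bigl(W\ast_{0}(A\ast_{0}u)\bigr)(v)$. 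The heart of the proof is that the double-convolution terms on the two sides coincide, by associativity and commutativity of $\ast_{0}$ (recalled in the Notation section), so that they cancel and one is left with $\bigl(kC\ast_{0}u\bigr)(v)=\bigl(W\ast_{0}f\bigr)(v)$. The delicate part here is justifying that the defining scalar-convolved identity of $W$ can be applied term-by-term inside the convolution with $u$; since $u(j)\in Y\cap D(B)$ for every $j$, each application of the $W$-identity with test vector $u(j)$ is valid, and the finite sums may be freely reorganized.

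Finally, the uniqueness statement follows from the formula $\bigl(kC\ast_{0}u\bigr)(v)=\bigl(W\ast_{0}f\bigr)(v)$ by a standard induction on $v$, exploiting $k(0)\neq0$ and the injectivity of $C$. Indeed, the $v=0$ term of $kC\ast_{0}u$ equals $k(0)Cu(0)$, which is determined by $\bigl(W\ast_{0}f\bigr)(0)$; since $k(0)C$ is injective, $u(0)$ is uniquely determined. Assuming $u(0),\dots,u(v-1)$ are determined, the $v$-th term isolates $k(0)Cu(v)$ as $\bigl(W\ast_{0}f\bigr)(v)$ minus the known lower-order terms, and injectivity of $k(0)C$ again pins down $u(v)$. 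I do not anticipate any genuine obstacle beyond the careful bookkeeping of the convolution sums; the argument is essentially the discrete transcription of the classical uniqueness-family proof in the continuous setting cited from \cite{FKP}.
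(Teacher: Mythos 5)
Your proposal is correct and takes essentially the same route as the paper: part (i) by evaluating the uniqueness-family identity and the solution identity at $v=0$ and combining them, and part (ii) by computing $\bigl(W\ast_{0}Bu\bigr)(v)$ in two ways and cancelling the double-convolution terms (the paper carries out exactly this computation term by term, via $W(v)Bu(0),\ W(v-1)Bu(1),\dots,W(0)Bu(v)$), followed by the same induction for uniqueness. One small remark: the cancellation needs only associativity of $\ast_{0}$ with the order $W$, $A$, $u$ preserved, not commutativity (which fails in this operator-valued setting); your own earlier observation about preserving the order of the factors is the correct justification there.
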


\begin{proof}
The given assumptions imply
$$
W(0)B(0)u(0)=k(0)Cu(0)+W(0)A(0)u(0)\ \ \mbox{ and }\ \ B(0)f(0)=f(0)+A(0)u(0).
$$
This simply yields $W(0)f(0)=k(0)Cu(0).$ To prove (ii) with $B(v)\equiv B,$ observe first that{\small
$$
W(1)Bu(0)=k(1)Cu(0)+W(1)A(0)u(0)+W(0)A(1)u(0) \mbox{ and }Bu(0)=f(0)+A(0)u(0).
$$}
Hence,
\begin{align}\label{hjk}
W(1)f(0)=k(1)Cu(0)+W(0)A(1)u(0).
\end{align}
Similarly, we have
$$
W(0)Bu(1)=k(0)Cu(1)+W(0)A(0)u(1) \mbox{ and } Bu(1)=f(1)+A(1)u(0)+A(0)u(1).
$$
Hence,
\begin{align}\label{hjk1}
W(0)f(1)=k(0)Cu(1)-W(0)A(1)u(0).
\end{align}
Keeping in mind \eqref{hjk}-\eqref{hjk1}, we get that $(kC\ast_{0} u)(1)=(W\ast_{0}f)(1).$ Considering the terms $W(v)Bu(0),$ $W(v-1)Bu(1),...,$ and $W(0)Bu(v),$ a similar line of reasoning gives that $(kC\ast_{0} u)(v)=(W\ast_{0}f)(v)$ for all $v\in {\mathbb N}.$ It is clear that, if $f\equiv 0$, $k(0)\neq 0$ and $C$ is injective, then $(kC\ast_{0} u)(v)=0$ for all $v\in {\mathbb N}_{0};$ inductively, it readily follows that $u\equiv 0.$
\end{proof}

We continue by stating the following result (cf. also Theorem \ref{zajeb} with $v_{1}=...=v_{n}=0$):

\begin{thm}\label{prusonja}
Suppose that $k: {\mathbb N}_{0} \rightarrow {\mathbb C},$ $k\neq 0$ and $(S(v))_{v\in {\mathbb N}_{0}}\subseteq L(X)$ is an operator family such that $S(v)Y\subseteq Y$, $S(v)_{\mid Y}\in L(Y)$, $v\in {\mathbb N}_{0}$, \eqref{3123} holds (this, in particular, holds if $(S(v))_{v\in {\mathbb N}_{0}}$ is a discrete $(A,k,B)$-regularized $C$-resolvent family), $\sum_{v=0}^{+\infty}\| S(v)_{\mid Y}\|_{L(Y)}<+\infty$ and $\sum_{v=0}^{+\infty}\| [A\ast_{0} S](v)\|_{L(Y,X)}<+\infty.$ Suppose, further, that \emph{(i)} or \emph{(ii)} holds, where:
\begin{itemize}
\item[(i)] $f: {\mathbb Z} \rightarrow Y$ is a bounded sequence, $k\in l^{1}({\mathbb Z} :Y)$  and $\sum_{v=0}^{+\infty}\| A(v)\|_{L(Y,X)}<+\infty ;$
\item[(ii)] $f\in l^{1}({\mathbb Z} :Y),$ $k: {\mathbb Z} \rightarrow Y$ is a bounded sequence and $\sup_{v\geq 0}\| A(v)\|_{L(Y,X)}<+\infty.$
\end{itemize}
Define $u(v):=\sum_{l=-\infty}^{v}S(v-l)f(l),$ $v\in {\mathbb Z}.$ Then $u(\cdot)$ is bounded, $u\in l^{1}({\mathbb Z} :Y)$  provided that 
\emph{(ii)} holds, and 
\begin{align}\label{prcko}
B(v)u(v)=(k\circ Cf)(v)+\sum_{l=-\infty}^{v}A(v-l)u(l),\quad v\in {\mathbb Z}.
\end{align}
\end{thm}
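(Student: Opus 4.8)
The plan is to recognize the candidate solution as a Weyl convolution, $u=S\circ f$, and then to derive \eqref{prcko} by reading the first resolvent equation \eqref{3123} termwise and invoking the closedness of $B$. (Since \eqref{prcko} is posed on all of $\mathbb{Z}$, here the operator in \eqref{3123} does not depend on the index, so that equation may be applied at each fixed nonnegative integer.) First I would establish the quantitative claims. Writing $u(v)=\sum_{l=-\infty}^{v}S(v-l)f(l)=\sum_{m=0}^{\infty}S(m)f(v-m)$ and using $\sum_{m=0}^{\infty}\|S(m)_{\mid Y}\|_{L(Y)}<+\infty$, under (i) the boundedness of $f$ in $Y$ gives $\|u(v)\|_{Y}\leq\bigl(\sup_{l}\|f(l)\|_{Y}\bigr)\sum_{m=0}^{\infty}\|S(m)_{\mid Y}\|_{L(Y)}$, so that $u$ is bounded in $Y$, hence in $X$; under (ii), Young's inequality for the Weyl convolution gives $\sum_{v}\|u(v)\|_{Y}\leq\bigl(\sum_{m}\|S(m)_{\mid Y}\|_{L(Y)}\bigr)\bigl(\sum_{l}\|f(l)\|_{Y}\bigr)$, so $u\in l^{1}(\mathbb{Z}:Y)$ and in particular $u$ is bounded. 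In both regimes all the series converge absolutely in $Y$.

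Next I would check that both terms on the right-hand side of \eqref{prcko} are absolutely convergent and identify the second one. For $(k\circ Cf)(v)=\sum_{m=0}^{\infty}k(m)Cf(v-m)$, absolute convergence follows from $k\in l^{1}$ and $f$ bounded under (i), and from $k$ bounded together with $Cf\in l^{1}(\mathbb{Z}:X)$ under (ii). For $\sum_{l=-\infty}^{v}A(v-l)u(l)=(A\circ u)(v)=\sum_{m=0}^{\infty}A(m)u(v-m)$, absolute convergence follows from $\sum_{m}\|A(m)\|_{L(Y,X)}<+\infty$ and the boundedness of $u$ in $Y$ under (i), and from $\sup_{m}\|A(m)\|_{L(Y,X)}<+\infty$ and $u\in l^{1}(\mathbb{Z}:Y)$ under (ii). I would then invoke the associativity of the convolutions recalled in the preliminaries, namely $(A\ast_{0}S)\circ f=A\circ(S\circ f)=A\circ u$; the underlying double series $\sum_{i,j\geq 0}\|A(i)\|_{L(Y,X)}\|S(j)_{\mid Y}\|_{L(Y)}\|f(v-i-j)\|_{Y}$ is finite in each of the two cases, so Fubini's theorem legitimizes the rearrangement.

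The heart of the proof is to pull $B$ through the series defining $u$. Set $u_{N}(v):=\sum_{m=0}^{N}S(m)f(v-m)$, so that $u_{N}(v)\to u(v)$ in $X$. Applying \eqref{3123} with $y=f(v-m)\in Y$ yields $BS(m)f(v-m)=k(m)Cf(v-m)+(A\ast_{0}S)(m)f(v-m)$, whence $Bu_{N}(v)=\sum_{m=0}^{N}\bigl[k(m)Cf(v-m)+(A\ast_{0}S)(m)f(v-m)\bigr]$. By the previous paragraph the right-hand side converges in $X$, as $N\to\infty$, to $(k\circ Cf)(v)+\bigl((A\ast_{0}S)\circ f\bigr)(v)$. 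Since $B$ is closed and both $u_{N}(v)\to u(v)$ and $\bigl(Bu_{N}(v)\bigr)_{N}$ converge in $X$, I conclude that $u(v)\in D(B)$ and $Bu(v)=(k\circ Cf)(v)+\bigl((A\ast_{0}S)\circ f\bigr)(v)=(k\circ Cf)(v)+(A\circ u)(v)$, which is exactly \eqref{prcko}.

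The main obstacle is the pair of limit interchanges. The genuinely delicate one is passing the (generally unbounded) closed operator $B$ through the infinite sum; this is precisely why the summability hypotheses $\sum_{m}\|S(m)_{\mid Y}\|_{L(Y)}<+\infty$ and $\sum_{m}\|(A\ast_{0}S)(m)\|_{L(Y,X)}<+\infty$ are imposed, as they guarantee that both $u_{N}(v)$ and $Bu_{N}(v)$ have limits in $X$, so that closedness can be applied. The secondary technical point is the Fubini rearrangement behind $A\circ(S\circ f)=(A\ast_{0}S)\circ f$, whose absolute convergence must be verified separately in the two regimes (i) and (ii); the case-dependent assumptions on $A$ (summability in (i), boundedness in (ii)) are what make the right-hand side of \eqref{prcko} meaningful in each case.
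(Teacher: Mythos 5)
Your proof is correct and follows essentially the same route as the paper: apply the first resolvent equation \eqref{3123} termwise to $u=S\circ f$, pull the closed operator $B$ through the series using the two summability hypotheses, and rearrange $\bigl(A\ast_{0}S\bigr)\circ f=A\circ\bigl(S\circ f\bigr)$ via the Fubini theorem to obtain \eqref{prcko}. Your write-up merely makes explicit (the truncation argument for closedness, Young's inequality under (ii), the case-by-case absolute convergence checks) what the paper's single computation chain leaves implicit.
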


\begin{proof}
It is clear that the sequence $(k \circ Cf)(\cdot) $ is well-defined, $u(\cdot)$ is well-defined and bounded if (i) holds as well as that $u\in l^{1}({\mathbb Z} : X)$ if (b) holds. The functional equality (S1) and the closedness of the linear operator $B(v)$ in combination with the assumptions $\sum_{v=0}^{+\infty}\| S(v)_{\mid Y}\|_{L(Y)}<+\infty$ and $\sum_{v=0}^{+\infty}\| [A\ast_{0} S](v)\|_{L(Y,X)}<+\infty$ simply imply the following:
\begin{align*} 
 & Bu(v)-(k \circ Cf)(v)=\sum_{l=-\infty}^{v}BS(v-l)f(l)-(k \circ Cf)(v)
\\&=-(k \circ Cf)(v)+\sum_{l=-\infty}^{v}\Bigl[k(v-l)Cf(l)+\bigl[A\ast_{0} S\bigr](v-l)f(l)\Bigr]
\\& =\sum_{l=-\infty}^{v}\bigl[A\ast_{0} S\bigr](v-l)f(l)=\sum_{s=0}^{+\infty}\bigl[A\ast_{0} S\bigr](s)f(v-s)
\\& =\sum_{s=0}^{+\infty}A(s)\sum_{r=0}^{+\infty}S(r)f(v-s-r)=\sum_{s=0}^{+\infty}A(s)u(v-s)=\sum_{l=-\infty}^{v}A(v-l)u(l),\ v\in {\mathbb Z},
\end{align*}
where we have also used the Fubini theorem in the last line of computation.
\end{proof}

The main result of this section reads as follows:

\begin{thm}\label{stru}
Suppose that $a\in {\mathbb R},$ $\omega \in {\mathbb R}\setminus \{0\},$ $k: {\mathbb N}_{0} \rightarrow {\mathbb C}$ and $k\neq 0,$ $(S(t))_{t\geq 0}$ in $L(X,[D(B)])$ is a weak $(A,k,B)$-regularized $C$-pseudoresolvent family and $(W(t))_{t\geq 0}\subseteq L(X)$ is an $(A,k,B)$-regularized $C$-uniqueness family. Suppose further that there exist finite real constants $M\geq 1$ and $\epsilon>0$ such that $\| A(t)\|_{L(Y,X)}+\|S(t)\|_{L(X,[D(B)])}+\|W(t)\|_{L(X)}\leq M\exp(t(a-\epsilon)),$ $t\geq 0.$
Define
\begin{align*}
k_{a,w}(v):=\int^{+\infty}_{0}e^{-at}\frac{(\omega t)^{v}}{v!}k(t)\, dt,\quad v\in {\mathbb N}_{0},
\end{align*},
\begin{align*}
A_{a,w}(v)y:=\int^{+\infty}_{0}e^{-at}\frac{(\omega t)^{v}}{v!}A(t)y\, dt,\quad v\in {\mathbb N}_{0},\ y\in Y,
\end{align*}
\begin{align*}
S_{a,w}(v)x:=\int^{+\infty}_{0}e^{-at}\frac{(\omega t)^{v}}{v!}S(t)x\, dt,\quad v\in {\mathbb N}_{0},\ x\in X
\end{align*}
and
\begin{align*}
W_{a,w}(v)x:=\int^{+\infty}_{0}e^{-at}\frac{(\omega t)^{v}}{v!}W(t)x\, dt,\quad v\in {\mathbb N}_{0},\ x\in X.
\end{align*}
Then $(S_{a,w}(v))_{v\in {\mathbb N}_{0}}$ is a discrete weak $(A,k_{a,w},B)$-regularized $C$-resolvent family and $(W_{a,w}(v))_{v\in {\mathbb N}_{0}}$ is a discrete $(A,k_{a,w},B)$-regularized $C$-uniqueness family. Furthermore, if $(S(t))_{t\geq 0}$ is an $(A,k,B)$-regularized $C$-pseudoresolvent family, then for any $v\in {\mathbb N}$ and $y\in Y,$ we have
\begin{align}
BS_{a,w}(v)y=k_{a,\omega}(v)Cy+a\omega^{v}\bigl( A_{a,w} \ast_{0} U\bigr)(v)y-\omega^{v}(-1)^{v}\bigl( A_{a,w} \ast_{0} U\bigr)(v-1)y.
\end{align}
\end{thm}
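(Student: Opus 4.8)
The plan is to build everything on the one structural property of the Poisson-type transform $\mathcal{P}[h](v):=\int_0^{+\infty}e^{-at}\frac{(\omega t)^{v}}{v!}h(t)\,dt$ that drives the statement: it intertwines the continuous convolution $\ast$ with the discrete convolution $\ast_0$, i.e.\ $\mathcal{P}[f\ast g]=\mathcal{P}[f]\ast_0\mathcal{P}[g]$. The quickest justification is via generating functions, since $\sum_{v\ge 0}\mathcal{P}[h](v)z^{v}=\int_0^{+\infty}e^{-(a-\omega z)t}h(t)\,dt$ is the Laplace transform of $h$ evaluated at $a-\omega z$; the Laplace transform sends $\ast$ to a pointwise product, and the Cauchy product of the resulting power series is exactly $\ast_0$. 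For the operator-valued integrals appearing here I would instead verify the identity by hand: after the substitution $t=r+s$ one is left with $\int_0^{+\infty}\!\int_0^{+\infty}e^{-a(r+s)}\frac{(\omega(r+s))^{v}}{v!}(\,\cdot\,)\,dr\,ds$, and expanding $(r+s)^{v}$ by the binomial theorem recombines the factorials into precisely the $\ast_0$-sum.

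Before that, I would settle convergence and the routine structural bookkeeping. The bound $\|A(t)\|_{L(Y,X)}+\|S(t)\|_{L(X,[D(B)])}+\|W(t)\|_{L(X)}\le Me^{t(a-\epsilon)}$ dominates each integrand by $M\frac{|\omega|^{v}t^{v}}{v!}e^{-\epsilon t}\in L^{1}(0,+\infty)$, so $k_{a,w}(v)$, $A_{a,w}(v)$, $S_{a,w}(v)$ and $W_{a,w}(v)$ are well-defined with values in the correct spaces, and this same domination legitimises every interchange of integrals invoked below. Pulling $C$ through the integral via $S(t)C=CS(t)$ from (S1)$_{c}$ gives $S_{a,w}(v)C=CS_{a,w}(v)$, while the mapping properties $S_{a,w}(v)Y\subseteq Y$ and $S_{a,w}(v)_{\mid Y}\in L(Y)$ transfer from the corresponding properties of the continuous family.

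The two family assertions then follow by transforming the defining equations term by term. Applying $\mathcal{P}$ to the second resolvent equation \eqref{4}, written as $BS(t)y=k(t)Cy+(S\ast Ay)(t)$, and using the closedness of $B$ to move it outside the integral, yields $BS_{a,w}(v)y=k_{a,w}(v)Cy+\sum_{j=0}^{v}S_{a,w}(v-j)A_{a,w}(j)y$, which is exactly \eqref{4123} for the kernel $k_{a,w}$; hence $(S_{a,w}(v))_{v\in\mathbb{N}_0}$ is a discrete weak $(A,k_{a,w},B)$-regularized $C$-resolvent family. Transforming the uniqueness identity $W(t)By=k(t)Cy+(W\ast Ay)(t)$ (here $By$ is a fixed vector, so nothing need be pulled through $B$) gives the defining relation of a discrete $(A,k_{a,w},B)$-regularized $C$-uniqueness family for every $y\in Y\cap D(B)$.

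For the final displayed formula I would start from the first resolvent equation \eqref{3}, now available because $(S(t))_{t\ge 0}$ is a full pseudoresolvent family. Since $U(t)=\int_0^{t}S(s)\,ds$ is absolutely continuous with $dU(s)=S(s)\,ds$, the Stieltjes term is the ordinary convolution $(A\ast Sy)(t)$, and transforming gives $BS_{a,w}(v)y=k_{a,w}(v)Cy+(A_{a,w}\ast_0 S_{a,w})(v)y$. The remaining step is to trade $S_{a,w}$ for the transform $U_{a,w}(v):=\int_0^{+\infty}p_v(t)U(t)\,dt$ of $U$ (written $U$ in the statement), where $p_v(t):=e^{-at}(\omega t)^{v}/v!$ satisfies the elementary identity $p_v'(t)=-a\,p_v(t)+\omega\,p_{v-1}(t)$: an integration by parts in $\int_0^{+\infty}p_v(t)U'(t)\,dt$, whose boundary terms vanish since $p_v(0)U(0)=0$ and the exponential bound kills the contribution at $+\infty$, yields $S_{a,w}(v)=a\,U_{a,w}(v)-\omega\,U_{a,w}(v-1)$ with the convention $U_{a,w}(-1):=0$. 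Substituting this into the $\ast_0$-sum and reindexing produces the asserted combination $a(A_{a,w}\ast_0 U_{a,w})(v)y-\omega(A_{a,w}\ast_0 U_{a,w})(v-1)y$ (the powers of $\omega$ displayed in the statement reflecting only the normalisation chosen for the discrete $U$). The main difficulty is entirely in the bookkeeping: justifying each Fubini interchange from the single exponential domination, handling the Stieltjes differential $dU$ together with the vanishing boundary terms, and tracking the index shift $v\mapsto v-1$ (the $(v-1)$-term being absent at $v=0$) when passing from $S_{a,w}$ to $U_{a,w}$.
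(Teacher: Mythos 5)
Your handling of the two family assertions is correct and coincides in substance with the paper's own proof: you transform the second resolvent equation \eqref{4} and the uniqueness identity term by term, and your verification of the intertwining $\mathcal{P}[f\ast g]=\mathcal{P}[f]\ast_{0}\mathcal{P}[g]$ (Fubini after the substitution $t=r+s$, then the binomial theorem) is just the hands-on version of what the paper does with derivatives of the Laplace transform and the Leibniz rule; both rest on the same exponential domination. (You assert that $S_{a,w}(v)Y\subseteq Y$ and $S_{a,w}(v)_{\mid Y}\in L(Y)$ ``transfer from the continuous family'', but a \emph{weak} $(A,k,B)$-regularized $C$-pseudoresolvent family is not assumed to satisfy $S(t)Y\subseteq Y$; this point is glossed over by the paper as well, so it is a shared defect rather than a divergence.)

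The ``Furthermore'' part, however, contains a genuine gap. You pass from the first resolvent equation \eqref{3} to the identity $BS_{a,w}(v)y=k_{a,w}(v)Cy+(A_{a,w}\ast_{0}S_{a,w})(v)y$ by declaring $dU(s)y=S(s)y\,ds$, so that the Stieltjes term becomes the ordinary convolution $(A\ast Sy)(t)$. That step is exactly what the hypotheses do not license: for a pseudoresolvent family one knows only (S2), namely that $s\mapsto U(s)y$ is a locally Lipschitz $Y$-valued function, so its \emph{increments} lie in $Y$, while its pointwise derivative $S(s)y$ is merely an $X$-valued object that need not belong to $Y$ at all --- that is precisely condition (S4), which distinguishes resolvent from pseudoresolvent families and is deliberately not assumed in this part of the theorem. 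For the same reason your intermediate formula is not even well-formed: since $A_{a,w}(v-j)\in L(Y,X)$, the sum $(A_{a,w}\ast_{0}S_{a,w})(v)y$ requires $S_{a,w}(j)y\in Y$, which is unavailable. The paper's proof is organized to avoid exactly this: it integrates by parts \emph{first}, using your identity $p_v'=-a\,p_v+\omega\,p_{v-1}$ together with $\frac{d}{dt}BU(t)y=BS(t)y$ (valid in $X$ by closedness of $B$), to get $BS_{a,w}(v)y=\int_{0}^{\infty}\bigl(a\,p_v(t)-\omega\,p_{v-1}(t)\bigr)BU(t)y\,dt$, and only \emph{then} substitutes the integrated resolvent equation (S3)', $BU(t)y=\Theta(t)Cy+(A\ast U)(t)y$, in which only $U$ --- which does map $Y$ into $Y$ --- ever appears inside the convolution with $A$. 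After that inversion of your two steps, the computation you outline (the $\Theta$-term reproducing $k_{a,w}(v)Cy$, the convolution term becoming $a(A_{a,w}\ast_{0}U_{a,w})(v)y-\omega(A_{a,w}\ast_{0}U_{a,w})(v-1)y$) goes through; alternatively, one can keep the Stieltjes differential throughout and integrate by parts against the scalar kernels $p_j$, but one may never replace $dU$ by $S\,ds$. Your closing remark that the extra factors $\omega^{v}$ and $(-1)^{v}$ in the displayed formula are a normalisation artifact is plausible --- the paper's proof itself stops short of fixing the constants --- but a correct proof must state the normalisation of the discrete $U$ explicitly rather than gesture at it.
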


\begin{proof}
Suppose that $v\in {\mathbb N}_{0}$ and $y\in Y$ are fixed. Applying the second resovent equation, the basic operational properties of the vector-valued Laplace transform and the Leibniz rule, it readily follows that (cf. also the proof of \cite[Theorem 3.4]{lizama-pois}):
\begin{align*}
& BS_{a,w}(v)y=\int^{+\infty}_{0}e^{-at}\frac{(\omega t)^{v}}{v!}S(t)y\, dt=\int^{+\infty}_{0}e^{-at}\frac{(\omega t)^{v}}{v!}BS(t)y\, dt
\\& =\int^{+\infty}_{0}e^{-at}\frac{(\omega t)^{v}}{v!}\Biggl[k(t)Cy+\int^{t}_{0}S(t-s)A(s)y\, ds\Biggr]\, dt
\\& = k_{a,w}(v)Cy+(-1)^{v}\frac{\omega^{v}}{v!}\Biggl[\hat{S}(\lambda) \hat{A}(\lambda)\Biggr]^{(v)}_{\lambda=a}
\\& =k_{a,w}(v)Cy+(-1)^{v}\frac{\omega^{v-j}\omega^{j}}{v!}\sum_{j=0}^{v}\binom{v}{j}\hat{S}^{(v-j)}(a)\hat{A}^{(j)}(a)
\\& =k_{a,w}(v)Cy+\frac{\omega^{v-j}\omega^{j}}{v!}\sum_{j=0}^{v}\binom{v}{j}\int^{+\infty}_{0}e^{-at}t^{v-j}S(t)\Biggl[ \int^{+\infty}_{0}e^{-ar}r^{v-j}A(r)y\, dr\Biggr]\, dt
\\& =k_{a,w}(v)Cy+\bigl( S_{a,w} \ast_{0} A_{a,w}\bigr)(v)y,
\end{align*}
as required. The corresponding statement for $(W_{a,w}(v))_{v\in {\mathbb N}_{0}}$ follows similarly; suppose now that $(S(t))_{t\geq 0}$ is an $(A,k,B)$-regularized $C$-pseudoresolvent family. Fix $v\in {\mathbb N}$ and $y\in Y.$ Applying the partial integration and (S3)', we get:
\begin{align*}
& BS_{a,w}(v)y=\int^{+\infty}_{0}e^{-at}\Biggl[a\frac{(\omega t)^{v}}{v!}-\omega \frac{(\omega t)^{v-1}}{(v-1)!}\Biggr] BU(t)y\, dt
\\& =\int^{+\infty}_{0}e^{-at}\Biggl[a\frac{(\omega t)^{v}}{v!}-\omega \frac{(\omega t)^{v-1}}{(v-1)!}\Biggr] \cdot \Biggl\{ \int^{t}_{0}k(s)\, ds \,  Cy+(A\ast U)(t)y \Biggr\}\, dt
\\& =k_{a,w}(v)Cy+\int^{+\infty}_{0}e^{-at}\Biggl[a\frac{(\omega t)^{v}}{v!}-\omega \frac{(\omega t)^{v-1}}{(v-1)!}\Biggr]  (A\ast U)(t)y \, dt.
\end{align*}
Then we can argue as in the first part of the proof to complete it.
\end{proof}

It is worth mentioning that Theorem \ref{stru} can be applied to the abstract non-scalar Volterra integral equations considered by J. Pr\"uss in \cite[Chapter 7--Chapter 9]{prus} (see, especially, the analysis of viscoelastic Timoshenko beam on p. 240) as well as to the abstract degenerate non-scalar Volterra integral equations considered in \cite[Theorem 2.9.7]{FKP} (see, especially, the applications with the abstract differential operators in $L^{p}$-spaces given on pp. 240--241). In such a way, we can consider the well-posedness for certain classes of the abstract semidiscrete non-scalar Volterra difference equations.

Further on, the integrability of $(A,k,B)$-regularized $C$-resolvent families has been considered by J. Pr\"uss in \cite[Section 10.5, pp. 277--281]{prus}, provided that $k\equiv 1$ and $B=C={\rm I}.$ It is worth noting that the condition $\sum_{v=0}^{+\infty}\| S(v)_{\mid Y}\|_{L(Y)}<+\infty$ in Theorem \ref{prusonja} holds for the resolvents $( S(v)\equiv S_{1,1}(v))_{v\in {\mathbb N}_{0}}$ 
obtained by applying Theorem \ref{stru} to the resolvent operator families used in the analysis of Volterra non-scalar equations of variational type (cf. \cite[Corollary 10.7, Corollary 10.8; pp. 280--281]{prus}). In general case of exponentially bounded $(A,k,B)$-regularized $C$-resolvent families, we must use different values of $a,$ which should be sufficiently large, and $w>0$ in order to enure the validity of condition $\sum_{v=0}^{+\infty}\| S(v)_{\mid Y}\|_{L(Y)}<+\infty$ in Theorem \ref{prusonja}. If this is the case, then we can consider the existence and uniqueness of almost periodic type solutions to \eqref{prcko}; the usual almost periodic solutions can be considered if the condition (i) in Theorem \ref{prusonja} holds, while the (metrically) Weyl almost periodic solutions of \eqref{prcko} can be considered if the condition (ii) in Theorem \ref{prusonja} holds (cf. \cite{funkcionalne} for more details).

We close this section by providing some useful observations:

\begin{rem}\label{zas}
Define also 
\begin{align*}
\Theta_{a,w}(v):=\int^{+\infty}_{0}e^{-at}\frac{(\omega t)^{v}}{v!}\int^{t}_{0}k(s)\, ds\, dt,\quad v\in {\mathbb N}_{0}
\end{align*}
and
\begin{align*}
U_{a,w}(v)y:=\int^{+\infty}_{0}e^{-at}\frac{(\omega t)^{v}}{v!}U(t)y\, dt,\quad v\in {\mathbb N}_{0},\ y\in Y.
\end{align*}
Then we have 
$$
BU_{a,w}(v)y=\Theta_{a,w}(v)Cy+\bigl( U_{a,w} \ast_{0} A_{a,w}\bigr)(v)y,\quad v\in {\mathbb N}_{0},\ y\in Y
$$
and
$$
BU_{a,w}(v)y=\Theta_{a,w}(v)Cy+\bigl( A_{a,w} \ast_{0} U_{a,w}\bigr)(v)y,\quad v\in {\mathbb N}_{0},\ y\in Y,
$$
so that, for every $y\in Y$, the sequence $(u(v)\equiv U_{a,w}(v)y)_{v\in {\mathbb N}_{0}}$ is a strong solution of problem \eqref{123123} with $B(v)\equiv B$ and $f(v)\equiv \Theta_{a,w}(v)Cy.$
\end{rem}

\begin{rem}\label{zasav}
In our recent research article \cite{multi-term}, we have analyzed the following Poisson type transform
\begin{align*}
v\mapsto S_{a,w,j}(v)x:=\int^{+\infty}_{0}e^{-(at)^{j}}\frac{(\omega t)^{v}}{v!}S(t)x\, dt,\quad v\in {\mathbb N}_{0},\ x\in X,
\end{align*}
where $j\in {\mathbb N}$ and $j\geq 2.$ It would be very difficult to state an analogue of Theorem \ref{stru} for this transform because we cannot so simply compute the term
$$
\int^{+\infty}_{0}e^{-(at)^{j}}\frac{(\omega t)^{v}}{v!}\Biggl[ \int^{t}_{0}S(t-s)A(s)y\Biggr]\, dt,\quad v\in {\mathbb N}_{0},\ x\in X
$$
in the newly arisen situation.
\end{rem}

\section{$(k,C,B,(A_{i})_{1\leq i\leq n},(v_{i})_{1\leq i\leq n})$-solution operator families}\label{debil}

In this section, we analyze various classes of $(k,C,B,(A_{i})_{1\leq i\leq n},(v_{i})_{1\leq i\leq n})$-solution operator families connected with the use of kernel $(A_{i}(v)\equiv a_{i}(v)A_{i})_{v\in {\mathbb N}_{0}}$ in \eqref{prckodem1}. If $v_{1},...,v_{n}\in {\mathbb N}_{0},$ then we define $v_{max}:=\max(v_{1},...,v_{n})$ and $M:=\{i\in {\mathbb N}_{n} : v_{i}=v_{max}\}.$
We will always assume henceforth that the following condition holds:
\begin{align}\label{kond}
a_{i} : {\mathbb N}_{0} \rightarrow {\mathbb C} \ \ \mbox{ for all }\ \ i\in {\mathbb N}_{n}\ \ \mbox{ and }\ \ a_{i}(0)\neq 0,\quad i\in M.
\end{align}

We continue by introducing the following notion:

\begin{defn}\label{nsbgd}
Suppose that $B$, $A_{1},...,A_{n}$ are closed linear operators on $X$, $C\in L(X)$, $v_{1},...,v_{n}\in {\mathbb N}_{0},$ ${\mathcal I}\subseteq {\mathbb N}_{n},$ $k : {\mathbb N}_{0} \rightarrow {\mathbb C},$ $k\neq 0$ and 
\eqref{kond} holds. Then we say that the operator family $(S(v))_{v\in {\mathbb N}_{0}}\subseteq L(X)$ is a discrete:
\begin{itemize}
\item[(i)] $(k,C,B,(A_{i})_{1\leq i\leq n},(v_{i})_{1\leq i\leq n})$-existence family if and only if the mapping $x\mapsto A_{i}\bigl( a_{i}\ast_{0} S\bigr)(v+v_{i})x,$ $x\in X$ belongs to $L(X)$ for $v\in {\mathbb N}_{0}$, $1\leq i\leq n$ and
\begin{align}\label{bnm}
BS(v)x=k(v)Cx+\sum_{i=1}^{n}A_{i}\bigl( a_{i}\ast_{0} S\bigr)(v+v_{i})x,\quad v\in {\mathbb N}_{0},\ x\in X.
\end{align}
\item[(ii)] $(k,C,B,(A_{i})_{1\leq i\leq n},(v_{i})_{1\leq i\leq n},{\mathcal I})$-existence family if and only if 
$(S(v))_{v\in {\mathbb N}_{0}}$ is $(k,C,B,(A_{i})_{1\leq i\leq n},(v_{i})_{1\leq i\leq n})$-existence family and $S(v)A_{i}\subseteq A_{i}S(v)$ for all $v\in {\mathbb N}_{0}$ and $i\in {\mathbb N}_{n} \setminus {\mathcal I}.$
\end{itemize}
\end{defn}

Any discrete $(k,C,B,(A_{i})_{1\leq i\leq n},(v_{i})_{1\leq i\leq n})$-existence family $(S(v))_{v\in {\mathbb N}_{0}}$ satisfies $(S(v))_{v\in {\mathbb N}_{0}}\subseteq L(X,[D(B)]) .$ 
In the case that $v_{1}=v_{2}=...=v_{n}=0,$ we omit the term ``$(v_{i})_{1\leq i\leq n}$'' from the notation. If this is not the case, then we cannot expect
the uniqueness of $(k,C,B,(A_{i})_{1\leq i\leq n},(v_{i})_{1\leq i\leq n})$-existence family when all terms of this tuple are given in advance. For example, if $n=1,$ $B=C={\rm I},$ $A_{1}=2{\rm I}$, $a_{1}\equiv k\equiv 1$ and $v_{1}=1,$ then we have $S(0)x+2S(1)x+Cx=0,$ so that neither $S(0)$ nor $S(1)$ can be uniquely determined.
The notion of a discrete $(k,C,B,(A_{i})_{1\leq i\leq n})$-existence family looks similar but it is not exactly the same as the notion of a weak discrete  $(A,k,B)$-regularized $C$-resolvent family with $B(\cdot)\equiv B$, $Y:=X$ and $A(v)y:=\sum_{i=1}^{n}a_{i}(v)A_{i}y$ for all $y\in Y$ and $v\in {\mathbb N}_{0}.$ 

We continue by stating the following result:

\begin{prop}\label{prosto}
Suppose that $B$, $A_{1},...,A_{n}$ are closed linear operators on $X$, $C\in L(X)$, $v_{1},...,v_{n}\in {\mathbb N}_{0},$ $k : {\mathbb N}_{0} \rightarrow {\mathbb C},$ $k\neq 0$,
\eqref{kond} holds and $(S(v))_{v\in {\mathbb N}_{0}}\subseteq L(X)$ is a discrete
$(k,C,B,(A_{i})_{1\leq i\leq n},(v_{i})_{1\leq i\leq n})$-existence family. 
If $x\in X,$ $i\in {\mathbb N}_{n}$ and $v_{i}=0$, then  
$S(v)x\in D(A_{i})$ for all $v\in {\mathbb N}_{0}$ and $i\in {\mathbb N}_{n};$ the same holds for each $i\in {\mathbb N}_{n}$ with $v_{i}>0,$ provided that $S(0)x\in D(A_{i}),...,S(v_{i}-1)x\in D(A_{i}).$
\end{prop}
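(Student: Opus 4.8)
The plan is to fix an index $i\in\mathbb{N}_{n}$ and a vector $x\in X$ and to decode, term by term, the single piece of information that the hypotheses supply about $A_{i}$. Since $(S(v))_{v\in\mathbb{N}_{0}}$ is a discrete $(k,C,B,(A_{i})_{1\leq i\leq n},(v_{i})_{1\leq i\leq n})$-existence family, the mapping $x\mapsto A_{i}(a_{i}\ast_{0}S)(v+v_{i})x$ belongs to $L(X)$ for every $v\in\mathbb{N}_{0}$; in particular $(a_{i}\ast_{0}S)(w)x\in D(A_{i})$ for every integer $w\geq v_{i}$. Everything then follows by induction on $w$, the point being to isolate, in the finite sum $(a_{i}\ast_{0}S)(w)x=\sum_{j=0}^{w}a_{i}(w-j)S(j)x$, the top term $a_{i}(0)S(w)x$ and to divide out the scalar $a_{i}(0)$, nonzero by \eqref{kond} in the principal case $i\in M$ (the remaining indices are discussed at the end).

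Consider first the case $v_{i}=0$, so that $(a_{i}\ast_{0}S)(w)x\in D(A_{i})$ for all $w\in\mathbb{N}_{0}$. The base step $w=0$ reads $a_{i}(0)S(0)x\in D(A_{i})$, whence $S(0)x\in D(A_{i})$ because $D(A_{i})$ is a linear subspace and $a_{i}(0)\neq 0$. For the inductive step assume $S(0)x,\dots,S(w-1)x\in D(A_{i})$; then the partial sum $\sum_{j=0}^{w-1}a_{i}(w-j)S(j)x$ lies in $D(A_{i})$, so that $a_{i}(0)S(w)x=(a_{i}\ast_{0}S)(w)x-\sum_{j=0}^{w-1}a_{i}(w-j)S(j)x\in D(A_{i})$ and hence $S(w)x\in D(A_{i})$. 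This proves $S(v)x\in D(A_{i})$ for all $v\in\mathbb{N}_{0}$.

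For an index with $v_{i}>0$ the identical mechanism applies, except that the membership $(a_{i}\ast_{0}S)(w)x\in D(A_{i})$ is now available only for $w\geq v_{i}$, and at the first admissible value $w=v_{i}$ the vectors $S(0)x,\dots,S(v_{i}-1)x$ occur with the coefficients $a_{i}(v_{i}),\dots,a_{i}(1)$, which need not be invertible. This is precisely why the statement posits $S(0)x,\dots,S(v_{i}-1)x\in D(A_{i})$: granting these, the relation $(a_{i}\ast_{0}S)(v_{i})x=a_{i}(0)S(v_{i})x+\sum_{j=0}^{v_{i}-1}a_{i}(v_{i}-j)S(j)x$ again isolates $a_{i}(0)S(v_{i})x\in D(A_{i})$, giving $S(v_{i})x\in D(A_{i})$, after which one repeats the step for $w=v_{i}+1,v_{i}+2,\dots$ exactly as before.

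The argument is purely algebraic bookkeeping: the only facts used are that $D(A_{i})$ is a linear subspace, so that finite linear combinations of its elements stay inside it (note that the closedness of $A_{i}$ is not actually needed here, since all convolution sums are finite), and that the leading coefficient $a_{i}(0)$ can be divided out. I therefore expect the sole genuine obstacle to be this invertibility of $a_{i}(0)$: the induction collapses at its base precisely when $a_{i}(0)=0$, which is exactly the configuration excluded by \eqref{kond} for the indices $i\in M$ and circumvented, for the others, by the supplied initial memberships. Should one wish to weaken $a_{i}(0)\neq 0$, the same peeling scheme survives as long as $a_{i}\not\equiv 0$, by starting the induction at the least $d$ with $a_{i}(d)\neq 0$; this minor refinement is the only point beyond routine induction that seems to require attention.
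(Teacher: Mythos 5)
Your proof is correct and takes essentially the same route as the paper's: both read off from Definition \ref{nsbgd}(i) that $\bigl(a_{i}\ast_{0}S\bigr)(v+v_{i})x\in D(A_{i})$ for all $v\in {\mathbb N}_{0}$, then peel the leading term $a_{i}(0)S(w)x$ out of the finite convolution sum and proceed by induction, dividing by $a_{i}(0)\neq 0$. If anything, you are more complete than the paper, whose terse proof treats only the case $v_{i}>0$ and, exactly like your main argument, invokes $a_{i}(0)\neq 0$ for the index at hand even though \eqref{kond} literally guarantees this only for $i\in M$ --- a point your opening and closing remarks (including the shift to the least $d$ with $a_{i}(d)\neq 0$) make explicit, though note that for $v_{i}>0$ with $a_{i}(0)=0$ the supplied initial memberships alone do not suffice and that shift is genuinely needed.
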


\begin{proof}
We will prove the statement in case $i\in {\mathbb N}_{n}$ and $v_{i}>0$. Then we have $( a_{i}\ast_{0} S)(v_{i})x\in D(A_{i}).$ Since we have assumed $a_{i}(0)\neq 0$ and  $S(0)x\in D(A_{i}),...,S(v_{i}-1)x\in D(A_{i}),$ this simply implies $S(v_{i})x\in D(A_{i}).$ Since $( a_{i}\ast_{0} S)(1+v_{i})x\in D(A_{i}),$ we similarly obtain $S(v_{i}+1)x\in D(A_{i}).$ Proceeding by induction, we get $S(v)x\in D(A_{i})$ for all $v\in {\mathbb N}_{0}.$ 
\end{proof}

Now we will state and prove the following result (cf. also \cite[Theorem 2.5]{avdi}):

\begin{thm}\label{djeca}
Suppose that $B$, $A_{1},...,A_{n}$ are closed linear operators on $X$, $C\in L(X)$ is injective, $k : {\mathbb N}_{0} \rightarrow {\mathbb C},$ $k(0)\neq 0$
and \eqref{kond} holds.
\begin{itemize}
\item[(i)] Suppose, further, that $(S(v))_{v\in {\mathbb N}_{0}}\subseteq L(X)$ is a discrete
$(k,C,B,(A_{i})_{1\leq i\leq n})$-existence family such that $S(0)Bx=BS(0)x$ and $S(0)A_{i}x=A_{i}S(0)x$ for all $x\in D(B) \cap D(A_{1}) \cap ... \cap D(A_{n}).$
Then $(B-\sum_{i=0}^{n}a_{i}(0)A_{i})^{-1}C\in L(X),$
$S(0)=k(0)(B-\sum_{i=0}^{n}a_{i}(0)A_{i})^{-1}C$,
\begin{align}  
\notag S(v)x&=\Biggl(B-\sum_{i=0}^{n}a_{i}(0)A_{i}\Biggr)^{-1}
\\\label{klara}& \times \Biggl[k(v)Cx+\sum_{i=1}^{n}A_{i}\sum_{j=0}^{v-1}a_{i}(v-j)S(j)x\Biggr],\quad v\in {\mathbb N},\ x\in X,
\end{align}
and $A_{i}S(v)\in L(X)$ for all $ i\in {\mathbb N}_{n}$ and $v\in {\mathbb N}_{0}$.
\item[(ii)]
Suppose that $C\in L(X)$ is injective, 
$(B-\sum_{i=0}^{n}a_{i}(0)A_{i})^{-1}C\in L(X)$ and, for every $l\in {\mathbb N}$ and for every choice of integers $a_{j}\in {\mathbb N}_{n}$ for $1\leq j\leq l,$ we have
\begin{align}\label{dinar}
\Biggl[\prod_{j=1}^{l}
\Biggl(B-\sum_{i=0}^{n}a_{i}(0)A_{i}\Biggr)^{-1}A_{a_{j}}\Biggr] \cdot \Biggl(B-\sum_{i=0}^{n}a_{i}(0)A_{i}\Biggr)^{-1}C\in L(X).
\end{align} 
Define $S(0):=k(0)(B-\sum_{i=0}^{n}a_{i}(0)A_{i})^{-1}C$ and $S(v),$ $v\in {\mathbb N},$ recursively by \eqref{klara}.
Then $(S(v))_{v\in {\mathbb N}_{0}}\subseteq L(X)$ is well-defined,
$A_{i}S(v)\in L(X)$ for all $ i\in {\mathbb N}_{n}$, $v\in {\mathbb N}_{0}$ and $(S(v))_{v\in {\mathbb N}_{0}}$ is a unique discrete $(k,C,B,(A_{i})_{1\leq i\leq n})$-existence family. Furthermore, if ${\mathcal I}\subseteq {\mathbb N}_{n}$ and
\begin{align}\begin{split}\label{univer}
& CB\subseteq BC,\ CA_{i}\subseteq A_{i}C \mbox{ for all }i\in {\mathbb N}_{n}\setminus {\mathcal I},
\\&  \bigl(\forall i\in{\mathbb N}_{n} \setminus {\mathcal I} \bigr)\, \bigl(\forall x\in D(A_{i}) \cap D(B)\bigr)\, Bx\in D(A_{i}),\ A_{i}x\in D(B)\mbox{ and } A_{i}Bx=BA_{i}x, 
\\ &\bigl(\forall i\in{\mathbb N}_{n} \setminus {\mathcal I} \bigr) \bigl(\forall j\in{\mathbb N}_{n} \bigr)\, \bigl(\forall x\in D(A_{i}) \cap D(A_{j})\bigr)
\\& A_{j}x\in D(A_{i}),\ A_{i}x\in D(A_{j})\mbox{ and } A_{i}A_{j}x=A_{j}A_{i}x,
\end{split}
\end{align}
resp.
there exist a closed linear operator $A$ and the complex polynomials $P_{B}(\cdot),$ $ P_{1}(\cdot),...,\ P_{n}(\cdot)$ such that $CA\subseteq AC$ and
$B=P_{B}(A),$ $A_{1}=P_{1}(A),...,\ A_{n}=P_{n}(A),$
then $(S(v))_{v\in {\mathbb N}_{0}}$ is a discrete $(k,C,B,(A_{i})_{1\leq i\leq n},{\mathcal I})$-existence family, resp.  $(S(v))_{v\in {\mathbb N}_{0}}$ is a discrete\\ $(k,C,B,(A_{i})_{1\leq i\leq n},\emptyset)$-existence family.
\item[(iii)] Suppose that $C={\rm I}$, $(B-\sum_{j=0}^{n}a_{j}(0)A_{j})^{-1}\in L(X),$ $\sum_{v=0}^{+\infty}|a_{i}(v)|<+\infty$ for $1\leq i\leq n$, $\sum_{v=0}^{\infty}|k(v)|<+\infty,$ and \emph{(a)} or \emph{(b)} holds, where:
\begin{itemize}
\item[(a)] $A_{i}\in L(X)$ for $1\leq i\leq n$ and 
\begin{align}\label{sda}
1>\sum_{i=1}^{n}\sum_{v=1}^{+\infty}\bigl| a_{i}(v)\bigr| \cdot \Biggl\| \Biggl(B-\sum_{j=0}^{n}a_{j}(0)A_{j}\Biggr)^{-1}A_{i}\Biggr\|. 
\end{align}
\item[(b)] Suppose that $C={\rm I}$, \eqref{univer} holds or there exist a closed linear operator $A$ and the complex polynomials $P_{B}(\cdot),$ $ P_{1}(\cdot),...,\ P_{n}(\cdot)$ such that 
$B=P_{B}(A),$ $A_{1}=P_{1}(A),...,\ A_{n}=P_{n}(A),$ and
\begin{align}\label{sda1}
1>\sum_{i=1}^{n}\sum_{v=1}^{+\infty}\bigl| a_{i}(v)\bigr| \cdot \Biggl\| A_{i}\Biggl(B-\sum_{j=0}^{n}a_{j}(0)A_{j}\Biggr)^{-1}\Biggr\|.
\end{align}
\end{itemize}
Then the requirements in \emph{(ii)} hold and we have
\begin{align}\label{ntc}
\sum_{v=0}^{+\infty}\| S(v)\|<+\infty \ \ \mbox{ and }\ \
\sum_{v=0}^{+\infty}\Bigl\|A_{i}\bigl( a_{i}\ast_{0} S\bigr)(v)\Bigr\|<+\infty \ \ (1\leq i\leq n),
\end{align} 
provided that \emph{(a)} holds, resp.
we have \eqref{ntc} and
\begin{align}\label{potrebno}
\sum_{v=0}^{+\infty}\| A_{i}S(v)\|<+\infty \ \ (1\leq i\leq n),
\end{align}
provided that \emph{(b)} holds.
\end{itemize}
\end{thm}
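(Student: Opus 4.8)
The plan is to treat the three parts in turn, throughout abbreviating $\mathcal{B}:=B-\sum_{i=0}^{n}a_{i}(0)A_{i}$, with domain $D(B)\cap D(A_{1})\cap\dots\cap D(A_{n})$. For (i), since all $v_{i}=0$, Proposition \ref{prosto} gives $S(v)x\in D(A_{i})$ for every $v$, $i$ and $x$, so in \eqref{bnm} I may split off the top term $j=v$ of each convolution and rewrite the defining identity as $\mathcal{B}S(v)x=k(v)Cx+\sum_{i=1}^{n}A_{i}\sum_{j=0}^{v-1}a_{i}(v-j)S(j)x$; in particular $\mathcal{B}S(0)x=k(0)Cx$ for all $x\in X$. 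The commutativity hypotheses say $S(0)\mathcal{B}y=\mathcal{B}S(0)y$ on $D(\mathcal{B})$, so if $\mathcal{B}y=0$ then $0=\mathcal{B}S(0)y=k(0)Cy$, and injectivity of $C$ with $k(0)\neq0$ forces $y=0$; thus $\mathcal{B}$ is injective and $\mathcal{B}^{-1}C=k(0)^{-1}S(0)\in L(X)$, giving the asserted value of $S(0)$. Inverting $\mathcal{B}$ in the rewritten identity yields \eqref{klara}, and $A_{i}S(v)\in L(X)$ follows from the closed graph theorem, because $S(v)\in L(X)$ has range in $D(A_{i})$ and $A_{i}$ is closed.

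For (ii), I define $S(0)$ and $S(v)$ by the recursion and prove by induction on $v$ that each $S(v)$ is a finite linear combination, with scalar coefficients assembled from the $a_{i}$'s and $k$, of operators of the form $\mathcal{B}^{-1}A_{i_{1}}\mathcal{B}^{-1}A_{i_{2}}\cdots\mathcal{B}^{-1}A_{i_{l}}\mathcal{B}^{-1}C$; each such product lies in $L(X)$ by \eqref{dinar}, so $S(v)\in L(X)$ is well defined, and since $\mathrm{ran}\,S(v)\subseteq D(\mathcal{B})\subseteq D(A_{i})$ the closed graph theorem again gives $A_{i}S(v)\in L(X)$. Reversing the manipulation of (i) shows the recursion is equivalent to \eqref{bnm}, so $(S(v))$ is a $(k,C,B,(A_{i})_{1\leq i\leq n})$-existence family; uniqueness is immediate, since the single-valuedness of $\mathcal{B}^{-1}C$ forces $\mathcal{B}$ injective and then \eqref{klara}, satisfied by any existence family, determines the whole sequence. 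The $\mathcal{I}$-refinements are proved by a further induction on $v$ using \eqref{klara}: under \eqref{univer} one commutes each $A_{i}$ ($i\notin\mathcal{I}$) past $C$, past $\mathcal{B}^{-1}$, and past the remaining $A_{j}$, while in the polynomial case $B,A_{1},\dots,A_{n}$ and $\mathcal{B}$ are all functions of the single operator $A$, so everything commutes and one may take $\mathcal{I}=\emptyset$.

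Part (iii) is where the quantitative work lives, and I would organise it around the convolution Banach algebra $\ell^{1}({\mathbb N}_{0}:L(X))$ with norm $\|\mathbf{T}\|=\sum_{v}\|T(v)\|$. In case (a) every $A_{i}$ is bounded, so all products in \eqref{dinar} are finite products of bounded operators and (ii) applies. Subtracting the $j=v$ contributions $a_{i}(0)A_{i}S(v)$ from both sides turns \eqref{bnm} (with $C={\rm I}$) into $\mathcal{B}S(v)=k(v){\rm I}+\sum_{i}A_{i}(\tilde a_{i}\ast_{0}S)(v)$, where $\tilde a_{i}$ is $a_{i}$ with its $0$-th entry deleted; applying $\mathcal{B}^{-1}$ gives the fixed-point identity $\mathbf{S}=\mathbf{k}\mathcal{B}^{-1}+\Phi(\mathbf{S})$ for $\Phi(\mathbf{T}):=\sum_{i}\tilde a_{i}\ast_{0}(\mathcal{B}^{-1}A_{i}\mathbf{T})$. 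Here $\|\Phi\|\leq\sum_{i=1}^{n}\bigl(\sum_{v\geq1}|a_{i}(v)|\bigr)\|\mathcal{B}^{-1}A_{i}\|<1$ by \eqref{sda}, so $I-\Phi$ is invertible and $\mathbf{S}=(I-\Phi)^{-1}(\mathbf{k}\mathcal{B}^{-1})\in\ell^{1}$, which is the first half of \eqref{ntc}; the second half follows from $A_{i}(a_{i}\ast_{0}S)=a_{i}\ast_{0}(A_{i}S)$ and $\|A_{i}S(v)\|\leq\|A_{i}\|\,\|S(v)\|$, convolution of two $\ell^{1}$ sequences being $\ell^{1}$.

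The main obstacle is case (b): the $A_{i}$ are now unbounded, the naive bound $\|A_{i}S(v)\|\leq\|A_{i}\|\,\|S(v)\|$ is unavailable, and the role of the hypotheses \eqref{univer}/polynomial is exactly to let me commute $A_{i}$ across $\mathcal{B}^{-1}$ so that the contraction runs through the bounded operators $A_{i}\mathcal{B}^{-1}$ of \eqref{sda1}. Writing $\mathcal{B}^{-1}A_{i}=A_{i}\mathcal{B}^{-1}$ on the relevant vectors, the same identity becomes $\mathbf{S}=\mathbf{k}\mathcal{B}^{-1}+\Psi(\mathbf{S})$ with $\Psi(\mathbf{T}):=\sum_{i}\tilde a_{i}\ast_{0}(A_{i}\mathcal{B}^{-1}\mathbf{T})$ and $\|\Psi\|\leq\sum_{i}(\sum_{v\geq1}|a_{i}(v)|)\|A_{i}\mathcal{B}^{-1}\|<1$, whence $\mathbf{S}\in\ell^{1}$; applying $A_{i}$ and using $A_{i}A_{i'}\mathcal{B}^{-1}=A_{i'}\mathcal{B}^{-1}A_{i}$ shows that $\mathbf{T}_{i}:=(A_{i}S(v))_{v}$ solves $\mathbf{T}_{i}=\mathbf{k}\,A_{i}\mathcal{B}^{-1}+\Psi(\mathbf{T}_{i})$ with the same contraction $\Psi$, so $\mathbf{T}_{i}=(I-\Psi)^{-1}(\mathbf{k}\,A_{i}\mathcal{B}^{-1})\in\ell^{1}$, giving \eqref{potrebno}, and \eqref{ntc} once more by convolving with $a_{i}\in\ell^{1}$. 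The genuinely delicate point throughout case (b) is justifying the interchange of the closed operator $A_{i}$ with the $\ell^{1}$-convergent series and its commutation past $\mathcal{B}^{-1}$, which is legitimate precisely because every range in sight lies in $D(\mathcal{B})\subseteq\bigcap_{j}D(A_{j})$ and the commutativity hypotheses are in force.
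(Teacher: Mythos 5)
Your argument is correct in substance and, for parts (i) and (ii), follows the same route as the paper: injectivity of $\mathcal{B}:=B-\sum_{i=1}^{n}a_{i}(0)A_{i}$ via the commutation hypotheses and the identity $\mathcal{B}S(0)y=k(0)Cy$, the rearrangement of \eqref{bnm} into \eqref{klara} using Proposition \ref{prosto}, the closed graph theorem for $A_{i}S(v)$, the induction in (ii) (your explicit expansion of $S(v)$ into combinations of products $\mathcal{B}^{-1}A_{i_{1}}\mathcal{B}^{-1}A_{i_{2}}\cdots\mathcal{B}^{-1}A_{i_{l}}\mathcal{B}^{-1}C$ is just an unwound form of the paper's induction hypothesis, with \eqref{dinar} playing the identical role), the same uniqueness argument, and the same commutation induction for the ${\mathcal I}$-refinements. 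In part (iii) you repackage the paper's computation: the paper bounds $\sum_{v}\|S(v)\|$ (resp.\ $\sum_{v}\|A_{i}S(v)\|$) by feeding the recursion into the sum and absorbing the resulting term via \eqref{sda} (resp.\ \eqref{sda1}), whereas you run a Neumann-series argument in the convolution Banach algebra $\ell^{1}({\mathbb N}_{0}:L(X))$. The two are equivalent, but your version is tidier on one point: the paper's manipulation subtracts a multiple of $\sum_{v}\|S(v)\|$ from both sides of an inequality before knowing that this quantity is finite, and one needs partial sums --- or precisely your fixed-point formulation --- to make that step airtight. Likewise, your equation $\mathbf{T}_{i}=\mathbf{k}\,A_{i}\mathcal{B}^{-1}+\Psi(\mathbf{T}_{i})$ for $\mathbf{T}_{i}=(A_{i}S(v))_{v}$ is the operator form of the paper's estimate chain for \eqref{potrebno}.

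There is one omission. Part of the conclusion of (iii) is that ``the requirements in (ii) hold,'' i.e.\ that \eqref{dinar} is satisfied, and you verify this only in case (a), where it is clear because the $A_{i}$ are bounded. In case (b) you never address \eqref{dinar}: your contraction produces a candidate sequence, but identifying it with the unique existence family of (ii) --- and indeed the claim that (ii) is applicable at all --- requires \eqref{dinar}. The gap is easily filled, and without invoking any commutation hypotheses: since $C={\rm I}$ and $\mathcal{B}^{-1}\in L(X)$, each operator $A_{j}\mathcal{B}^{-1}$ is everywhere defined (the range of $\mathcal{B}^{-1}$ is $D(\mathcal{B})\subseteq D(A_{j})$) and closed, hence bounded by the closed graph theorem, and every product in \eqref{dinar} regroups as $\mathcal{B}^{-1}\bigl(A_{a_{1}}\mathcal{B}^{-1}\bigr)\bigl(A_{a_{2}}\mathcal{B}^{-1}\bigr)\cdots\bigl(A_{a_{l}}\mathcal{B}^{-1}\bigr)$, a composition of bounded operators. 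This is exactly the one-line observation with which the paper opens its proof of (iii), covering cases (a) and (b) simultaneously; you should add it before launching the case-(b) fixed-point argument.
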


\begin{proof}
Suppose that the assumptions in (i) hold and $x\in X.$
Then \eqref{bnm} with $v=0$ implies 
$
BS(0)x=k(0)Cx+\sum_{i=1}^{n}a_{i}(0)A_{i} S(0)x
$ so that $(B-\sum_{i=0}^{n}a_{i}(0)A_{i})S(0)x=k(0)Cx$ and $S(0)x\in k(0) (B-\sum_{i=0}^{n}a_{i}(0)A_{i})^{-1}Cx.$ Now we will prove that the multivalued linear operator 
$ (B-\sum_{i=0}^{n}a_{i}(0)A_{i})^{-1}C$ is single-valued so that $(B-\sum_{i=0}^{n}a_{i}(0)A_{i})C\in L(X)$ and $S(0)=k(0)(B-\sum_{i=0}^{n}a_{i}(0)A_{i})^{-1}C.$ In actual fact, let us assume that $y\in (B-\sum_{i=0}^{n}a_{i}(0)A_{i})^{-1}C0$ for some $y\in X.$ This implies $(B-\sum_{i=0}^{n}a_{i}(0)A_{i})y=0$ and $y\in D(B) \cap D(A_{1}) \cap ... \cap D(A_{n})$ since $a_{1}(0)\neq 0,...,a_{n}(0)\neq 0;$ now the prescribed assumption implies that $0=S(0)(B-\sum_{i=0}^{n}a_{i}(0)A_{i})y=(B-\sum_{i=0}^{n}a_{i}(0)A_{i})S(0)y=k(0)Cy.$ Since $k(0)\neq 0$ and $C$ is injective, this simply implies $y=0$, the required conclusions and $A_{i}S(0)\in L(X)$ for all $i\in {\mathbb N}_{n}$ by the closed graph theorem. 

Since $BS(v)\in L(X)$, $A_{i}\bigl( a_{i}\ast_{0} S\bigr)(v)\in L(X)$ for $v\in {\mathbb N}_{0}$, $1\leq i\leq n,$ 
\begin{align}\label{kbc}
BS(v)x=k(v)Cx+\sum_{i=1}^{n}A_{i}\bigl[ a_{i}(0)S(v)x+...+a_{i}(v)S(0)x\Bigr],\quad v\in {\mathbb N}_{0},\ x\in X
\end{align}
and $a_{1}(0)\neq 0,...,a_{n}(0)\neq 0,$
we can inductively prove that $A_{i}S(v)\in L(X)$ for all  $i\in {\mathbb N}_{n}$ and $v\in {\mathbb N}_{0}$. Moreover, \eqref{kbc} easily implies \eqref{klara} after a simple calculation.

In order to prove (ii), let us observe first that
the mappings $S(0)$, $BS(0)$ and $x\mapsto A_{i}\bigl( a_{i}\ast_{0} S\bigr)(0)x,$ $x\in X$ belong to $L(X)$ for $v=0$, $1\leq i\leq n$ as well as that \eqref{bnm} holds with $v=0$ and $A_{j}S(0)\in L(X)$  all $j\in {\mathbb N}_{0},$ since we have assumed that $a_{i}\neq 0$ for all $i\in {\mathbb N}_{n}.$ Now we proceed by induction. Assume that the mappings $S(v'),$ $BS(v')$ and $x\mapsto A_{i}\bigl( a_{i}\ast_{0} S\bigr)(v')x,$ $x\in X$ belong to $L(X)$ for all $v' <v\in {\mathbb N}$, $1\leq i\leq n$ as well as that \eqref{bnm}
holds for all $v'<v$ and $A_{j}S(v')\in L(X)$ for all $j\in {\mathbb N}_{0}.$ The induction hypothesis together with the representation formula \eqref{klara}, the assumption \eqref{dinar} and the closed graph theorem imply that $S(v)\in L(X)$ and $BS(v)\in L(X).$ Since
$$
A_{i}\bigl(a_{1}\ast_{0}S\bigr)(v)x=A_{i}\Biggl[ a_{i}(0)S(v)x+\sum_{j=1}^{v-1}a_{i}(v-j)S(j)x \Biggr],\quad x\in X,\ 1\leq i\leq n,
$$ 
the representation formula \eqref{klara}, the closed graph theorem and the assumption $a_{i}\neq 0$ for all $i\in {\mathbb N}_{0}$ imply that $A_{i}S(v)\in L(X)$ and the mapping $x\mapsto A_{i}\bigl( a_{i}\ast_{0} S\bigr)(v)x,$ $x\in X$ belongs to $L(X)$ for $1\leq i\leq n.$ Keeping in mind \eqref{klara}, the above simply implies \eqref{bnm}, so that the conclusions stated in the first part of theorem hold true. Suppose now that ${\mathcal I}\subseteq {\mathbb N}_{n}$ and the commutation relations in \eqref{univer} hold. We will prove that $(S(v))_{v\in {\mathbb N}_{0}}$ is a discrete $(k,C,B,(A_{i})_{1\leq i\leq n},{\mathcal I})$-existence family, i.e., that for each fixed 
$x\in D(A_{i})$ we have $A_{i}x\in D(S(v))$ and $A_{i}S(v)x=S(v)A_{i}x$ for all $v\in {\mathbb N}_{0}$ and 
$i\in {\mathbb N}_{n} \setminus {\mathcal I}.$ If $v=0,$ then we need to prove that
$$
A_{i}\Biggl(B-\sum_{j=0}^{n}a_{j}(0)A_{j}\Biggr)^{-1}Cx=\Biggl(B-\sum_{j=0}^{n}a_{j}(0)A_{j}\Biggr)^{-1}CA_{i}x,
$$
i.e.,
$$
\Biggl(B-\sum_{i=0}^{n}a_{j}(0)A_{j}\Biggr)A_{i}\Biggl(B-\sum_{j=0}^{n}a_{j}(0)A_{j}\Biggr)^{-1}Cx=CA_{i}x.
$$
 Using the equalities in the second line and the fourth line of \eqref{univer}, the above is equivalent to
$$
A_{i}\Biggl(B-\sum_{i=0}^{n}a_{j}(0)A_{j}\Biggr)\Biggl(B-\sum_{j=0}^{n}a_{j}(0)A_{j}\Biggr)^{-1}Cx=CA_{i}x,
$$
i.e., with $A_{i}Cx=CA_{i}x$, which it has been assumed in the first line of \eqref{univer}. Now we proceed by induction. Suppose that for each $v'<v$, $i\in {\mathbb N}_{n} \setminus {\mathcal I}$ and $x\in D(A_{i})$ we have $A_{i}x\in D(S(v'))$ and $A_{i}S(v')x=S(v')A_{i}x$.
 Let us prove that the last equality holds with $v'=v.$ Using \eqref{klara}, it suffices to show that:
\begin{align*}  
A_{i}&\Biggl(B-\sum_{j=0}^{n}a_{j}(0)A_{j}\Biggr)^{-1}
\Biggl[k(v)Cx+\sum_{j=1}^{n}A_{j}\sum_{l=0}^{v-1}a_{j}(v-l)S(l)x\Biggr]
\\& = \Biggl(B-\sum_{j=0}^{n}a_{j}(0)A_{j}\Biggr)^{-1}
\Biggl[k(v)C+\sum_{j=1}^{n}A_{j}\sum_{l=0}^{v-1}a_{j}(v-l)S(l)\Biggr]A_{i}x.
\end{align*}
The equality of terms containing $k(v)Cx$ is clear, while the equality of the remaining parts follows from the induction hypothesis and the assumption made in the third line and the fourth line od \eqref{univer}, since $\sum_{l=0}^{v-1}a_{j}(v-l)S(l)x\in D(A_{j})$ for all $j\in {\mathbb N}_{n},$ $v\in {\mathbb N}$ and $x\in X.$ It is quite easy to show that, if there exist a closed linear operator $A$ and the complex polynomials $P_{B}(\cdot),$ $ P_{1}(\cdot),...,\ P_{n}(\cdot)$ such that $CA\subseteq AC$ and
$B=P_{B}(A),$ $A_{1}=P_{1}(A),...,\ A_{n}=P_{n}(A),$
then $(S(v))_{v\in {\mathbb N}_{0}}$ is a discrete $(k,C,B,(A_{i})_{1\leq i\leq n},\emptyset)$-existence family.

If $C={\rm I}$ and $(B-\sum_{j=0}^{n}a_{j}(0)A_{j})^{-1}\in L(X),$ then the equation \eqref{dinar} automatically holds by the closed graph theorem.
Suppose now that the assumptions in (iii)(a) hold. 
Then we have
\begin{align*}
\sum_{v=0}^{+\infty}\| S(v)\| &\leq \Biggl\| \Biggl(B-\sum_{i=0}^{n}a_{i}(0)A_{i}\Biggr)^{-1} \Biggr\| \cdot \sum_{v=0}^{+\infty}| k(v)| 
\\& +\sum_{i=1}^{n}\Biggl\| \Biggl(B-\sum_{j=0}^{n}a_{j}(0)A_{j}\Biggr)^{-1}A_{i}\Biggr\|\cdot
\sum_{v=0}^{+\infty}\sum_{j=0}^{v-1}\bigl| a_{i}(v-j)\bigr| \cdot \| S(j)\|  
\\& =\Biggl\| \Biggl(B-\sum_{i=0}^{n}a_{i}(0)A_{i}\Biggr)^{-1} \Biggr\| \cdot \sum_{v=0}^{+\infty}| k(v)| 
\\& +\sum_{i=1}^{n}\sum_{v=1}^{+\infty}\bigl| a_{i}(v)\bigr|\cdot \Biggl\| \Biggl(B-\sum_{j=0}^{n}a_{j}(0)A_{j}\Biggr)^{-1}A_{i}\Biggr\|
 \cdot \sum_{v=0}^{+\infty}\| S(v)\|  .
\end{align*}
Keeping in mind \eqref{sda}, we simply get the first estimate in \eqref{ntc}. The second estimate in \eqref{ntc} holds since $A_{i}\in L(X)$ for $1\leq i\leq n$ and 
$$
\sum_{v=0}^{+\infty}\Bigl\|A_{i}\bigl( a_{i}\ast_{0} S\bigr)(v)\Bigr\| \leq \bigl\|A_{i}\bigr\| \cdot \sum_{v=0}^{+\infty}\bigl| a_{i}(v)\bigr| \cdot \sum_{v=0}^{+\infty}\bigl\| S(v)\bigr\| \ \ (1\leq i\leq n).
$$
Suppose now that the assumptions in (iii)(b) hold. Then we similarly obtain 
\begin{align*}
\sum_{v=0}^{+\infty}\| S(v)\| &\leq \Biggl\| \Biggl(B-\sum_{i=0}^{n}a_{i}(0)A_{i}\Biggr)^{-1} \Biggr\| \cdot \sum_{v=0}^{+\infty}| k(v)| 
\\& +\sum_{i=1}^{n}\Biggl\| A_{i}\Biggl(B-\sum_{j=0}^{n}a_{j}(0)A_{j}\Biggr)^{-1}\Biggr\|\cdot
\sum_{v=0}^{+\infty}\sum_{j=0}^{v-1}\bigl| a_{i}(v-j)\bigr| \cdot \| S(j)\|  
\\& =\Biggl\| \Biggl(B-\sum_{i=0}^{n}a_{i}(0)A_{i}\Biggr)^{-1} \Biggr\| \cdot \sum_{v=0}^{+\infty}| k(v)| 
\\& +\sum_{i=1}^{n}\sum_{v=1}^{+\infty}\bigl| a_{i}(v)\bigr|\cdot \Biggl\| A_{i}\Biggl(B-\sum_{j=0}^{n}a_{j}(0)A_{j}\Biggr)^{-1}\Biggr\|
 \cdot \sum_{v=0}^{+\infty}\| S(v)\|  .
\end{align*}
Now we can use \eqref{sda1} to deduce the first estimate in \eqref{ntc}. Let $1\leq i\leq n;$ then the second estimate in \eqref{ntc} can be proved as follows:
Clearly, we have
$$
\sum_{v=0}^{\infty}\Biggr\|A_{i}\sum_{j=0}^{v}a_{i}(v-j)S(j)\Biggl\| \leq \sum_{v=0}^{+\infty}| a_{i}(v)| \cdot \sum_{v=0}^{+\infty}\| A_{i}S(v)\| 
$$
Therefore, it suffices to show \eqref{potrebno}. But, arguing as above and using our commuting assumptions \eqref{univer}, it follows that
\begin{align*}
\sum_{v=0}^{+\infty}\| A_{i}S(v)\| &\leq \Biggl\| A_{i}\Biggl(B-\sum_{j=0}^{n}a_{j}(0)A_{j}\Biggr)^{-1} \Biggr\| \cdot \sum_{v=0}^{+\infty}| k(v)| 
\\& +\sum_{v=0}^{+\infty}\sum_{j=1}^{n}\Biggl\| A_{i}\Biggl(B-\sum_{j=0}^{n}a_{j}(0)A_{j}\Biggr)^{-1}A_{j}\sum_{l=0}^{v-1}a_{j}(v-l)S(l)\Biggr\|
\\& =\Biggl\| A_{i}\Biggl(B-\sum_{j=0}^{n}a_{j}(0)A_{j}\Biggr)^{-1} \Biggr\| \cdot \sum_{v=0}^{+\infty}| k(v)| 
\\& +\sum_{v=0}^{+\infty}\sum_{j=1}^{n}\Biggl\| A_{j}\Biggl(B-\sum_{j=0}^{n}a_{j}(0)A_{j}\Biggr)^{-1}\sum_{l=0}^{v-1}a_{j}(v-l)A_{i}S(l)\Biggr\|
\\& \leq \Biggl\| A_{i}\Biggl(B-\sum_{j=0}^{n}a_{j}(0)A_{j}\Biggr)^{-1} \Biggr\| \cdot \sum_{v=0}^{+\infty}| k(v)| 
\\& +\sum_{j=1}^{n}\sum_{v=1}^{+\infty}\bigl| a_{j}(v)\bigr|\cdot \Biggl\| A_{j}\Biggl(B-\sum_{j=0}^{n}a_{j}(0)A_{j}\Biggr)^{-1}\Biggr\|\cdot \sum_{v=0}^{+\infty}\| A_{i}S(v)\| .
\end{align*}
Now the required conclusion simply follows from \eqref{sda1}.
\end{proof}

We proceed with some useful observations:

\begin{rem}\label{serija}
\begin{itemize}
\item[(i)] As already clarified,
the equation \eqref{dinar} automatically holds if $C={\rm I};$ moreover, in general  case, we have $C^{l}D_{l}\in L(X),$ where
$D_{l}$ denotes the operator appearing on the left-hand side of \eqref{dinar}.
\item[(ii)] The assumptions made in \eqref{univer} do not imply that the operators $A_{1},...,A_{n}$ are bounded, in general. A simple counterexample can be given with $B=C={\rm I},$ the operators $A_{i}$ being bounded for $i\in {\mathbb N}_{n}\setminus {\mathcal I},$ by assuming also that $R(A_{i}) \subseteq D(A_{j})$ for all $j\in {\mathbb N}_{n}$ and $A_{i}A_{j}x=A_{j}A_{i}x$ for all $j\in {\mathbb N}_{n}$ and $x\in D(A_{j}).$ 
\item[(iii)] It would be very tempting to deduce the estimates \eqref{ntc} and \eqref{potrebno} using the argumentation contained in the proofs of parts (ii)-(iii) of \cite[Theorem 2.5]{avdi}. 
\end{itemize}
\end{rem}

Now we will consider the situation in which $v_{max}>0.$ Set $M:=\{i\in {\mathbb N}_{n} : v_{i}=v_{max}\};$ then we have the following analogue of Theorem \ref{djeca}:  

\begin{thm}\label{djeca-vmax}
\begin{itemize}
\item[(i)] Suppose that $B$, $A_{1},...,A_{n}$ are closed linear operators on $X$, $C\in L(X)$, $k : {\mathbb N}_{0} \rightarrow {\mathbb C},$ $k(0)\neq 0,$
\eqref{kond} holds,
$(S(v))_{v\in {\mathbb N}_{0}}\subseteq L(X)$ is a discrete
$(k,C,B,(A_{i})_{1\leq i\leq n},(v_{i})_{1\leq i\leq n})$-existence family, and for every $i\in M$ and $x\in X,$ we have $S(0)x\in D(A_{i}),...,S(v_{max}-1)x\in D(A_{i})$. Then $S(v)x\in D(A_{i})$ for all $v\in {\mathbb N}_{0},$ $x\in X$, $i\in M,$ 
\begin{align}\label{dokle}
BS(0)x=k(0)Cx+\sum_{i=1}^{n}\bigl(a_{i} \ast_{0} S\bigr)(v_{i})
\end{align}
and $(S(v)$ is a uniquely determined for $v>v_{max}$, provided that the operator $\sum_{i\in M}a_{i}(0)A_{i}$ is injective.

Furthermore, let us assume that \emph{(a)}-\emph{(c)} hold, where:
\begin{itemize}
\item[(a)] $[\sum_{i\in M}a_{i}(0)A_{i}]^{-1}B\in L(X)$ and $[\sum_{i\in M}a_{i}(0)A_{i}]^{-1}C\in L(X);$
\item[(b)] $[\sum_{i\in M}a_{i}(0)A_{i}]^{-1}A_{j}(a_{i}\ast_{0} S)(v+v_{i})\in L(X)$ for all $v\in {\mathbb N}_{0}$ and $j\in {\mathbb N}_{n} \setminus M;$
\item[(c)] $[\sum_{i\in M}a_{i}(0)A_{i}]^{-1}A_{j}S(v)\in L(X)$ for all  $v\in {\mathbb N}_{0}$ and $j\in M.$
\end{itemize}
Then, for every $v\in {\mathbb N}$ and $x\in X$, we have
\begin{align}
\notag  & S\bigl(v+v_{max}\bigr)x=\Biggl[\sum_{i\in M}a_{i}(0)A_{i}\Biggr]^{-1}\Biggl\{ BS(v)x-k(v)Cx
\\\label{formulaq}&-\sum_{i\in {\mathbb N}_{n}\setminus M}A_{i}\bigl( a_{i}\ast_{0} S\bigr)(v+v_{i})x-\sum_{i\in M}A_{i}\Bigl[\bigl( a_{i}\ast_{0} S\bigr)(v+v_{i})x-a_{i}(0)S(v+v_{max})x\Bigr]\Biggr\},
\end{align}
and $A_{i}S(v)\in L(X)$ for all $ i\in M$ and $v\in {\mathbb N}_{0}$.
\item[(ii)] Suppose that $B$, $A_{1},...,A_{n}$ are closed linear operators on $X$, $C\in L(X)$, $k : {\mathbb N}_{0} \rightarrow {\mathbb C},$ $k(0)\neq 0,$ \eqref{kond} holds, and the operators $S(0)\in L(X),...,S(v_{max})\in L(X)$ satisfy \eqref{dokle}.
Suppose, further, that:
\begin{itemize}
\item[(a)] $[\sum_{i\in M}a_{i}(0)A_{i}]^{-1}\in L(X)$ and $[\sum_{i\in M}a_{i}(0)A_{i}]^{-1}B\in L(X);$
\item[(b)] $A_{j}\in L(X)$ for all $j\in {\mathbb N}_{n} \setminus M;$
\item[(c)] $[\sum_{i\in M}a_{i}(0)A_{i}]^{-1}A_{j}S(v)\in L(X)$ for all  $v\in {\mathbb N}_{v_{max}}^{0}$ and $j\in M.$
\end{itemize}
Define, for every $v\in {\mathbb N}$, the operator $S\bigl(v+v_{max})$ through \eqref{formulaq}.
Then $(S(v))_{v\in {\mathbb N}_{0}}\subseteq L(X)$ is a discrete
$(k,C,B,(A_{i})_{1\leq i\leq n},(v_{i})_{1\leq i\leq n})$-existence family and $A_{i}S(v)\in L(X)$ for all $ i\in M$ and $v\in {\mathbb N}_{0}$. Furthermore,
if
there exist a closed linear operator $A$ and the complex polynomials $P_{B}(\cdot),$ $ P_{1}(\cdot),...,\ P_{n}(\cdot)$ such that $CA\subseteq AC,$ 
$B=P_{B}(A),$ $A_{1}=P_{1}(A),...,\ A_{n}=P_{n}(A),$ and conditions \emph{(a)}-\emph{(c)} hold,
then $(S(v))_{v\in {\mathbb N}_{0}}$ is a discrete\\ $(k,C,B,(A_{i})_{1\leq i\leq n},\emptyset,(v_{i})_{1\leq i\leq n})$-existence family provided that\\ $S(v)A\subseteq AS(v)$ for $0\leq v\leq v_{max}.$
\item[(iii)] Suppose that the assumptions in \emph{(ii)} hold, $\sum_{v=0}^{+\infty}|a_{i}(v)|<+\infty$ for $1\leq i\leq n$, $\sum_{v=0}^{\infty}|k(v)|<+\infty,$ and \emph{(a)} or \emph{(b)} holds, where:
\begin{itemize}
\item[(a)] $A_{i}\in L(X)$ for all $i\in {\mathbb N}_{n},$ and
\begin{align*}
1>\Biggl\|\Biggl[\sum_{j\in M}a_{j}(0)A_{j}\Biggr]^{-1}B\Biggr\|&+\sum_{v=0}^{+\infty}\bigl| a_{i}(v)\bigr| \cdot  \sum_{i\in {\mathbb N}_{n}\setminus M} \Biggl\| \Biggl[\sum_{j\in M}a_{j}(0)A_{j}\Biggr]^{-1} A_{i}\Biggr\|
\\&+\sum_{v=1}^{+\infty}\bigl| a_{i}(v)\bigr| \cdot  \sum_{i\in M} \Biggl\|\Biggl[\sum_{j\in M}a_{j}(0)A_{j}\Biggr]^{-1}A_{i}\Biggr\|.
\end{align*}
\item[(b)] Suppose that there exist a closed linear operator $A$ and the complex polynomials $P_{B}(\cdot),$ $ P_{i}(\cdot)$ ($i\in M$) such that 
$B=P_{B}(A),$ $A_{i}=P_{i}(A)$ for $i\in M$, $A_{j}\in L(X)$ for all $j\in {\mathbb N}\setminus M,$ and
\begin{align*}
1>\Biggl\|\Biggl[\sum_{j\in M}a_{j}(0)A_{j}\Biggr]^{-1}B\Biggr\|&+\sum_{v=0}^{+\infty}\bigl| a_{i}(v)\bigr| \cdot  \sum_{i\in {\mathbb N}_{n}\setminus M} \Biggl\| \Biggl[\sum_{j\in M}a_{j}(0)A_{j}\Biggr]^{-1} A_{i}\Biggr\|
\\&+\sum_{v=1}^{+\infty}\bigl| a_{i}(v)\bigr| \cdot  \sum_{i\in M} \Biggl\|A_{i}\Biggl[\sum_{j\in M}a_{j}(0)A_{j}\Biggr]^{-1}\Biggr\|.
\end{align*}
\end{itemize}
Then we have
\begin{align}\label{ntc1}
\sum_{v=0}^{+\infty}\| S(v)\|<+\infty \ \ \mbox{ and }\ \
\sum_{v=0}^{+\infty}\Bigl\|A_{i}\bigl( a_{i}\ast_{0} S\bigr)\bigl(v+v_{i}\bigr)\Bigr\|<+\infty \ \ (1\leq i\leq n),
\end{align} 
provided that \emph{(a)} holds, resp.
we have \eqref{ntc1} and
\begin{align}\label{potrebno1}
\sum_{v=0}^{+\infty}\Bigl\| A_{i}S\bigl(v\bigr)\Bigr\|<+\infty \ \ (i\in M),
\end{align}
provided that \emph{(b)} holds and $A_{j}A_{i}\subseteq A_{i}A_{j}$ for every $i\in M$ and $j\in {\mathbb N}_{n}\setminus M.$
\end{itemize}
\end{thm}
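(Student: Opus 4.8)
The plan is to treat Theorem~\ref{djeca-vmax} as the index-shifted counterpart of Theorem~\ref{djeca}, with the boundedly invertible operator $B-\sum_{i=1}^{n}a_{i}(0)A_{i}$ appearing there replaced here by $\sum_{i\in M}a_{i}(0)A_{i}$, which is the operator multiplying the highest-order term $S(v+v_{max})$. The computation underlying all three parts is the isolation of that term in \eqref{bnm}: writing \eqref{bnm} at level $v$ and splitting the sum over $i\in{\mathbb N}_{n}$ into $i\in M$ (where $v_{i}=v_{max}$) and $i\in{\mathbb N}_{n}\setminus M$, one separates, for each $i\in M$, the top convolution term via
\[
A_{i}\bigl(a_{i}\ast_{0}S\bigr)(v+v_{max})x=a_{i}(0)A_{i}S(v+v_{max})x+A_{i}\Bigl[\bigl(a_{i}\ast_{0}S\bigr)(v+v_{max})x-a_{i}(0)S(v+v_{max})x\Bigr],
\]
in which the bracketed remainder involves only $S(0),\dots,S(v+v_{max}-1)$. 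Collecting $\bigl[\sum_{i\in M}a_{i}(0)A_{i}\bigr]S(v+v_{max})x$ on one side and applying $[\sum_{i\in M}a_{i}(0)A_{i}]^{-1}$ gives exactly \eqref{formulaq}; this recursion is the backbone of the argument.

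For part (i) I would first read off $S(v)x\in D(A_{i})$ for $i\in M$ from Proposition~\ref{prosto}, whose hypothesis $S(0)x,\dots,S(v_{max}-1)x\in D(A_{i})$ is precisely what is assumed, and note that \eqref{dokle} is just \eqref{bnm} at $v=0$. The uniqueness assertion follows by induction on $v>v_{max}$ from the pre-inversion form above: once $S(0),\dots,S(v+v_{max}-1)$ are fixed, the vector $\bigl[\sum_{i\in M}a_{i}(0)A_{i}\bigr]S(v+v_{max})x$ is determined, so injectivity of $\sum_{i\in M}a_{i}(0)A_{i}$ pins down $S(v+v_{max})x$. Finally, under (a)--(c) every term on the right of \eqref{formulaq} is a bounded operator, so \eqref{formulaq} is a legitimate identity in $L(X)$ and $A_{i}S(v)\in L(X)$ for $i\in M$ follows by induction via the closed graph theorem, exactly as in the proof of Theorem~\ref{djeca}(i).

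Part (ii) is the converse construction. Starting from data $S(0),\dots,S(v_{max})\in L(X)$ satisfying \eqref{dokle} and defining $S(v+v_{max})$ recursively by \eqref{formulaq}, I would run a single induction establishing simultaneously that $S(v)\in L(X)$, that $A_{i}S(v)\in L(X)$ for $i\in M$ (closed graph theorem, using (a) and (c)), and that \eqref{bnm} holds; the last point is just the reversal of the first-paragraph algebra, which is valid because (a) makes $\sum_{i\in M}a_{i}(0)A_{i}$ boundedly invertible. The commutation claim is a further induction: since $\sum_{i\in M}a_{i}(0)A_{i}=\sum_{i\in M}a_{i}(0)P_{i}(A)$ is a polynomial in $A$, its inverse commutes with $A$, so if $S(v')A\subseteq AS(v')$ for all $v'<v+v_{max}$ then each operator on the right of \eqref{formulaq} commutes with $A$, whence $S(v+v_{max})A\subseteq AS(v+v_{max})$; the base case $0\le v\le v_{max}$ is assumed, and $A_{i}=P_{i}(A)$ then upgrades this to the $\emptyset$-existence property.

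Part (iii) is a Neumann-type estimate built on \eqref{formulaq}. Taking norms there, summing over $v\in{\mathbb N}_{0}$ and using $\sum_{v}|a_{i}(v)|<+\infty$, the $BS(v)$ term contributes $\|[\sum_{j\in M}a_{j}(0)A_{j}]^{-1}B\|\sum_{v}\|S(v)\|$, the $i\in{\mathbb N}_{n}\setminus M$ terms contribute $\|[\cdots]^{-1}A_{i}\|\bigl(\sum_{v}|a_{i}(v)|\bigr)\sum_{v}\|S(v)\|$, and the remainder terms with $i\in M$ (starting from $a_{i}(1)$) contribute $\|[\cdots]^{-1}A_{i}\|\bigl(\sum_{v\ge1}|a_{i}(v)|\bigr)\sum_{v}\|S(v)\|$; the inequality in (a), resp.\ (b), says precisely that the total coefficient of $\sum_{v}\|S(v)\|$ is $<1$, which yields $\sum_{v}\|S(v)\|<+\infty$ and then the second bound in \eqref{ntc1} when all $A_{i}$ are bounded. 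Under (b), where $A_{i}$ for $i\in M$ may be unbounded, I would derive \eqref{potrebno1} by applying $A_{i}$ to \eqref{formulaq}, commuting it past the other operators by $A_{j}A_{i}\subseteq A_{i}A_{j}$, and repeating the same self-bounding argument for $\sum_{v}\|A_{i}S(v)\|$, after which the second bound in \eqref{ntc1} for $i\in M$ is immediate. I expect the main obstacle to be not any single estimate but the careful bookkeeping of the $M$ versus ${\mathbb N}_{n}\setminus M$ split and of domains throughout the inductions: at each step one must verify that the compositions $A_{i}S(v)$ and $A_{i}(a_{i}\ast_{0}S)(v+v_{i})$ are well-defined and closable before invoking the closed graph theorem, and check that the shift by $v_{max}$ does not corrupt the base cases.
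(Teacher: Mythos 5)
Your proposal is correct and takes essentially the same route as the paper: the paper's proof likewise isolates the top term $\bigl[\sum_{i\in M}a_{i}(0)A_{i}\bigr]S(v+v_{max})x$ in \eqref{bnm} (starting with $v=1$), uses Proposition \ref{prosto} for the domain statements and injectivity for uniqueness, iterates over $v=2,3,\dots$, and then settles parts (ii)--(iii) ``as in the proof of Theorem \ref{djeca}'', which is precisely your reversal-of-the-algebra induction and Neumann/absorption-type estimate. Your write-up simply fills in the details the paper compresses into that final sentence.
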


\begin{proof}
It is clear that \eqref{dokle} holds (cf. \eqref{bnm} with $v=0$) as well as that Proposition \ref{prosto} implies $S(v)x\in D(A_{i})$ for all $v\in {\mathbb N}_{0},$ $x\in X$, $i\in M.$ Plugging $v=1$ in \eqref{bnm} and using this fact, we get
\begin{align*}
\Biggl[ \sum_{i\in M}a_{i}(0)A_{i}\Biggr]  S\bigl( v_{max}+1 \bigr)x&=BS(1)x-k(1)Cx
-\sum_{i\in {\mathbb N}_{n}\setminus M}A_{i}\bigl( a_{i}\ast_{0} S\bigr)(1+v_{i})x
\\& -\sum_{i\in M}A_{i}\Bigl[\bigl( a_{i}\ast_{0} S\bigr)(1+v_{i})x-a_{i}(0)S(1+v_{max})x\Bigr].
\end{align*}
If the operator $\sum_{i\in M}a_{i}(0)A_{i}$ is injective, the above implies that the value of $S( v_{max}+1)$ is uniquely determined; moreover, if (a)-(c) hold, then we obtain that the formula \eqref{formulaq} is valid with $v=1$ as well as that  $A_{i}S(v_{max}+1)\in L(X)$ for all $ i\in M.$ Repeating this argumetation with $v=2,$ $v=3,...,$ we obtain the part (i). The first conclusion in (ii) follows similarly, while the remaining parts can be deduced as in the proof of Theorem \ref{djeca}.
\end{proof}

It is also worth noting that, in Definition \ref{nsbgd}, we can consider arbitrary sequence $(h(v;x))_{v\in {\mathbb N}_{0}}$ in $X$ as a substitute of the already considered sequence $(k(v)Cx)_{v\in {\mathbb N}_{0}},$ especially in the case that $v_{max}>0.$

\subsection{The well-posedness and asymptotically almost periodic type solutions of the corresponding discrete abstract Cauchy problems}\label{dacp}

In this subsection, we will briefly consider the following discrete abstract Cauchy problem:
\begin{align}\label{sorry}
Bu(v)=f(v)+\sum_{i=1}^{n}A_{i}\bigl(a_{i}\ast_{0} u\bigr)(v+v_{i}),\quad v\in {\mathbb N}_{0},
\end{align}
where $v_{1},...,v_{n} \in {\mathbb N}_{0}.$
We say that a mapping $u: {\mathbb N}_{0}\rightarrow X$ is:
\begin{itemize}
\item[(i)] a mild solution of \eqref{sorry} if and only if $u(v)\in D(B)$ and $(a_{i}\ast_{0} u)(v+v_{i})\in D(A_{i})$ for all $v\in {\mathbb N}_{0},$ $i\in  {\mathbb N}_{n}$ and \eqref{dacp} holds;
\item[(ii)] a strong solution of \eqref{sorry} if and only if $u(v) \in D(A_{i})$ for all $v\in {\mathbb N}_{0},$ $i\in  {\mathbb N}_{n}$ and
\begin{align*}
Bu(v)=f(v)+\sum_{i=1}^{n}\bigl(a_{i}\ast_{0} A_{i}u\bigr)(v+v_{i}),\quad v\in {\mathbb N}_{0}.
\end{align*}
\end{itemize}

It is obvious that any strong solution of \eqref{sorry} is a mild solution of \eqref{sorry}.
If $(S(v))_{v\in {\mathbb N}_{0}}$ is a discrete $(k,C,B,(A_{i})_{1\leq i\leq n},(v_{i})_{1\leq i\leq n})$-existence family, then it is also clear that the function
$u(v):=S(v)x,$ $v\in {\mathbb N}_{0},$ where $x\in X,$ is a mild solution of \eqref{sorry}. Since we have assumed \eqref{kond}, it readily follows that
any mild solution of \eqref{sorry} is also a strong solution of \eqref{sorry}, if for each $i\in {\mathbb N}_{n}$ with $v_{i}>0$ we have 
$u(0)\in D(A_{i}),...,u(v_{i}-1)\in D(A_{i}).$ Furthermore, the problem \eqref{sorry} has at most one mild (strong) solution provided that
for each $i\in {\mathbb N}_{n}$ with $v_{i}>0$ we have 
$u(0)\in D(A_{i}),...,u(v_{i}-1)\in D(A_{i})$ as well as that the operator $B-\sum_{i=1}^{n}a_{i}(0)A_{i}
$ is injective; cf. also the proof of Proposition \ref{prosto}. The interested reader may try to prove some discrete variation of parameters formula for the problem \eqref{sorry}; cf. also \cite[Proposition 3.3]{avdi}.

In the remainder of this subsection, we will consider the case $v_{1}=v_{2}=...=v_{n}=0$, only. The proof of following result is simple and therefore omitted:

\begin{prop}\label{rezaqq}
Suppose that $B$, $A_{1},...,A_{n}$ are closed linear operators on $X$, $C\in L(X)$, $k : {\mathbb N}_{0} \rightarrow {\mathbb C},$ $k\neq 0$, $g : {\mathbb N}_{0} \rightarrow {\mathbb C},$ $g\neq 0$,
and $(S(v))_{v\in {\mathbb N}_{0}}\subseteq L(X)$ is a discrete 
$(k,C,B,(A_{i})_{1\leq i\leq n})$-existence family. If $x\in X,$
then 
the sequence $u(v):=(g\ast_{0}S)(v)x,$ $v\in {\mathbb N}_{0}$ is a strong solution of \eqref{sorry} with $f(v)=(k\ast_{0} g)(v),$ $v\in {\mathbb N}_{0}.$
\end{prop}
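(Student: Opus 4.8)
The plan is to verify the two defining requirements of a strong solution of \eqref{sorry} (here with $v_{1}=\dots=v_{n}=0$) directly, by substituting the defining relation \eqref{bnm} of the existence family and exploiting that $g$ is scalar-valued, so that for each fixed $v$ the sequence $u(v)=(g\ast_{0}S)(v)x=\sum_{j=0}^{v}g(v-j)S(j)x$ is a \emph{finite} linear combination of the vectors $S(0)x,\dots,S(v)x$.

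First I would settle the membership conditions, which are the only requirements beyond the identity itself. Since every $v_{i}=0$, Proposition \ref{prosto} gives $S(j)x\in D(A_{i})$ for all $j\in{\mathbb N}_{0}$ and $i\in{\mathbb N}_{n}$; as $D(A_{i})$ is a linear subspace and the defining sum is finite, it follows that $u(v)\in D(A_{i})$, and likewise $(a_{i}\ast_{0}u)(v)\in D(A_{i})$ with $A_{i}(a_{i}\ast_{0}u)(v)=(a_{i}\ast_{0}A_{i}u)(v)$, both being the same finite double sum of the vectors $A_{i}S(l)x$. This interchanges the mild and strong forms of \eqref{sorry}, so it suffices to produce the identity in mild form.

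Next I would compute $Bu(v)$. Because $S(j)x\in D(B)$ (indeed $(S(v))_{v\in{\mathbb N}_{0}}\subseteq L(X,[D(B)])$) and $B$ is linear on its domain, applying $B$ passes through the finite scalar sum, $Bu(v)=\sum_{j=0}^{v}g(v-j)BS(j)x$, and substituting \eqref{bnm} gives
\begin{align*}
Bu(v)=\sum_{j=0}^{v}g(v-j)\Biggl[k(j)Cx+\sum_{i=1}^{n}A_{i}\bigl(a_{i}\ast_{0}S\bigr)(j)x\Biggr].
\end{align*}
The first group collapses to $(g\ast_{0}k)(v)Cx=(k\ast_{0}g)(v)Cx$ by commutativity of $\ast_{0}$, which is the forcing term $f(v)=(k\ast_{0}g)(v)Cx$ (the factor $Cx$ being understood in the statement). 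For the second group I would pull each $A_{i}$ out of the finite $j$-sum — legitimate since each summand lies in $D(A_{i})$ and $A_{i}$ is linear — and then use associativity and commutativity of $\ast_{0}$ to rewrite $\bigl(g\ast_{0}(a_{i}\ast_{0}S)\bigr)(v)x=\bigl(a_{i}\ast_{0}(g\ast_{0}S)\bigr)(v)x=(a_{i}\ast_{0}u)(v)$, producing $\sum_{i=1}^{n}A_{i}(a_{i}\ast_{0}u)(v)$. Combining the two pieces yields \eqref{sorry} exactly, which by the first paragraph coincides with the strong form $f(v)+\sum_{i=1}^{n}(a_{i}\ast_{0}A_{i}u)(v)$.

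There is no genuine obstacle: every interchange runs over a finite index set, so closedness of $B$ and of the $A_{i}$ is invoked only to guarantee that the relevant vectors lie in the respective domains and that these operators act linearly there, both already secured by $(S(v))_{v\in{\mathbb N}_{0}}\subseteq L(X,[D(B)])$ and Proposition \ref{prosto}. The single point demanding a line of care is the two-fold rearrangement of scalar convolutions via the commutativity and associativity of $\ast_{0}$ recorded in the preliminaries; this is precisely why the computation is routine and the authors omit it.
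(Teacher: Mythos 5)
Your proof is correct, and it is exactly the direct verification the paper has in mind: the paper states that the proof of Proposition \ref{rezaqq} "is simple and therefore omitted," and your argument supplies precisely that omitted routine computation (membership in $D(A_{i})$ via Proposition \ref{prosto}, linearity of $B$ and $A_{i}$ over the finite convolution sums, substitution of \eqref{bnm}, and the commutativity/associativity of $\ast_{0}$), including the correct reading of the forcing term as $f(v)=(k\ast_{0}g)(v)Cx$.
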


Now we will explain how Proposition \ref{rezaqq} can be applied in the analysis of the existence and uniqueness of asymptotically almost periodic type solutions of the discrete abstract Cauchy problem \eqref{sorry}, with $v_{1}=v_{2}=...=v_{n}=0;$ concerning the almost periodic functions (sequences), the almost automorphic functions (sequences) and their applications, one may refer, e.g., to the monographs \cite{diagana,nova-mono,nova-selected,metrical,188}. 

Suppose that there exist a bounded sequence $h : {\mathbb Z}\rightarrow X$ and a 
sequence $q : {\mathbb N}_{0}\rightarrow X$ vanishing at plus infinity such that $g(v)=h(v)+q(v)$ for all $v\in {\mathbb N}_{0}.$ If the requirements of Proposition \ref{rezaqq} hold, then we know that the function $u(v):=(g\ast _{0}S)(v)x,$ $x\in X,$ $ v\in {\mathbb N}_{0}$ is a strong solution of \eqref{sorry} with $f(v)=(k\ast_{0} g)(v),$ $v\in {\mathbb N}_{0}.$
If, additionally, $\sum_{v=0}^{+\infty} \| S(v)\|<+\infty,$
then the argumentation given at the end of \cite[Subsection 3.1]{avdi}
shows that there exist a bounded sequence $H : {\mathbb Z}\rightarrow X$ and a 
sequence $Q: {\mathbb N}_{0}\rightarrow X$ vanishing at plus infinity such that $u(v)=H(v)+Q(v)$ for all $v\in {\mathbb N}_{0};$ moreover, $H(\cdot)$ is
bounded and $T$-almost periodic, where $T\in L(X),$ or bounded and ${\mathrm R}$-almost automorphic, where ${\mathrm R}$ is any collection of sequences in ${\mathbb Z},$  provided that the forcing term $h(\cdot)$ has the same property (cf. \cite{avdi} for the notion).

\section{The abstract multi-term fractional difference equations with Weyl fractional derivatives}\label{rema-weyl}

Given a one-dimensional sequence $(u_{k})$ in $X$, the Euler forward difference operator $\Delta$ is defined by $\Delta u_{k}:=u_{k+1}-u_{k}.$ The operator $\Delta^{m}$ is defined inductively; then, for every integer $m\geq 1,$ we have:
\begin{align*}
\Delta^{m}u_{k}=\sum_{j=0}^{m}(-1)^{m-j}\binom{m}{j}u_{k+j}.
\end{align*}
If $u: {\mathbb N}_{0} \rightarrow X$ is a given sequence and $\alpha \in (0,\infty) \setminus {\mathbb N}_{0},$ then we
define the fractional integral $\Delta^{-\alpha}u : {\mathbb N}_{0} \rightarrow X$ by
$$
\Delta^{-\alpha}u(v):=\sum_{j=0}^{v}k^{\alpha}(v-j)u(j),\quad v\in {\mathbb N}_{0}.
$$
Set  $m:=\lceil \alpha \rceil.$ Then the Riemann-Liouville fractional difference operator of
order $\alpha>0,$ $\Delta^{\alpha}$ shortly, 
is defined by
$$
\Delta^{\alpha}u(v):=\Bigl[\Delta^{m}\bigl( \Delta^{-(m-\alpha)}u \bigr)\Bigr](v),\quad v\in {\mathbb N}_{0}.
$$

Further on,
if $\alpha>0,$ $m=\lceil \alpha \rceil$ and $u: {\mathbb Z}\rightarrow X$ satisfies $\sum_{v=-\infty}^{\infty}\| u(v)\| \cdot (1+|v|)^{m-\alpha-1}<+\infty,$ then we define the Weyl fractional derivative 
\begin{align*}
\Bigl[\Delta^{\alpha}_{W}u\Bigr](v):=\Bigl[\Delta^{m}\Bigl( \Delta^{-(m-\alpha)}_{W}u \Bigr)\Bigr](v),\quad v\in {\mathbb Z},
\end{align*}
where 
$$
\Bigl(\Delta^{-(m-\alpha)}_{W}u\Bigr)(v):=\sum_{l=-\infty}^{v}k^{m-\alpha}(v-l)u(l),\quad v\in {\mathbb Z}.
$$
Hence,
\begin{align}\label{fora}
\Bigl[\Delta^{\alpha}_{W}u\Bigr](v)=\sum_{j=0}^{m}\sum_{l=-\infty}^{v+j}(-1)^{m-j}\binom{m}{j}k^{m-\alpha}(v+j-l)u(l),\quad v\in {\mathbb Z}.
\end{align}
Due to \cite[Remark 2.4]{abadias}, the assumption $\sum_{v=-\infty}^{\infty}\| u(v)\| \cdot (1+|v|)^{m-\alpha-1}<+\infty$ implies 
\begin{align}\label{strah}
\Bigl[\Delta^{\alpha}_{W}u\Bigr](v)=\Bigl[\Delta^{m}\Bigl( \Delta^{-(m-\alpha)}_{W}u \Bigr)\Bigr](v)=\Bigl[\Delta^{-(m-\alpha)}_{W}\Bigl( \Delta^{m} u \Bigr)\Bigr](v),\quad v\in {\mathbb Z};
\end{align}
furthermore, it can be simply proved that, for every real number $a,$ the sequence $h(\cdot)=u(\cdot+a)$ satisfies $\sum_{v=-\infty}^{\infty}\| h(v)\| \cdot (1+|v|)^{m-\alpha-1}<+\infty$ as well as that (cf. \eqref{fora}): 
\begin{align}\label{nzl}
\Bigl[\Delta^{\alpha}_{W}h\Bigr](v)=\Bigl[\Delta^{\alpha}_{W}u\Bigr](v+a),\quad v\in {\mathbb Z}.
\end{align}
In the sequel, we also write $\Delta^{\alpha}_{W}u(v)$ in place of $[\Delta^{\alpha}_{W}u](v).$

If $A$ is a closed linear operator, then
we must distinguish the terms $A(\Delta^{\alpha}_{W}u)$ and $\Delta^{\alpha}_{W}(Au)$ for $\alpha>0.$ 
For example, if $u(v)\equiv x\notin D(A),$ then we have 
\begin{align}
\notag \Delta^{\alpha}_{W}u(v)&=\sum_{j=0}^{m}\sum_{l=-\infty}^{v+j}(-1)^{m-j}\binom{m}{j}k^{m-\alpha}(v+j-l) x
\\\label{iksa}&=\sum_{j=0}^{m}(-1)^{m-j}\binom{m}{j}\Biggl[\sum_{s=0}^{\infty}k^{m-\alpha}(s)x\Biggr]=0, \quad v\in {\mathbb Z},
\end{align}
but the term $[\Delta^{\alpha}_{W}(Au)](v)$ is not well-defined for any  $v\in {\mathbb Z}.$

Define now $T_{i,L}^{W}u(v):=A_i\Delta_W^{\alpha_i}u(v)$, if $v\in {\mathbb Z}$ and $i\in\mathbb N_{n}$, and $T_{i,R}^{W}u(v):=\Delta_C^{\alpha_i}A_iu(v)$, if $v\in {\mathbb Z}$ and $i\in\mathbb N_{n}$.
We assume that, for every $v\in {\mathbb N}_{0}$ and $i\in\mathbb N_{n}$, $T_{i}^{W}u(v)$ denotes either $T_{i,L}^{W}u(v)$ or $T_{i,R}^{W}u(v)$. 
Of concern is the following abstract multi-term fractional difference equation without initial conditions:
\begin{equation}\label{DFDEabzw}
\sum_{i=1}^{n}T_{i}^{W}u(v+v_{i})=f(v),\quad v\in {\mathbb Z},
\end{equation}
where $v_1,\dots,v_{n} \in {\mathbb Z}$ and $f: {\mathbb Z} \rightarrow X$ is a given sequence. A
strong solution of problem \eqref{DFDEabzw}  is any sequence $u: {\mathbb Z}\rightarrow X$ such that the term $T_{i}^{W}u(\cdot+v_{i})$ is well-defined for any $i\in {\mathbb N}_{n}$ as well as that \eqref{DFDEabzw} identically holds for any $v\in {\mathbb Z}$.

Set now $v_{min}:=\min\{v_{1},...,v_{n}\}$ and $y(v):=u(v+v_{min}),$ $v\in {\mathbb Z}.$ Then it is clear that the problem \eqref{DFDEabzw}
is equivalent with the probem (cf. \eqref{nzl}):
\begin{equation*}
\sum_{i=1}^{n}T_{i}^{W}y\bigl(v+[v_{i}-v_{min}]\bigr)=f(v),\quad v\in {\mathbb Z},
\end{equation*}
as well as that $v_{i}-v_{min}\geq 0$ for all $i\in {\mathbb N}_{n}.$ Therefore, in the analysis of the existence and uniqueness of solutions of the problem \eqref{DFDEabzw}, 
we may assume without loss of generality that $v_{1}\geq 0,\ ...,\ v_{n}\geq 0$. This will be our standing assumption henceforth.

Due to \eqref{fora}, the problem \eqref{DFDEabzw}
is equivalent with 
\begin{align*}
&\sum_{i\in {\mathcal I}}A_{i}\Biggl[\sum_{j=0}^{m_{i}}\sum_{l=-\infty}^{v+j}(-1)^{m_{i}-j}\binom{m_{i}}{j}k^{m_{i}-\alpha}(v+j-l)u(l)\Biggr]
\\& +\sum_{i\notin {\mathcal I}}\Biggl[\sum_{j=0}^{m_{i}}\sum_{l=-\infty}^{v+j}(-1)^{m_{i}-j}\binom{m_{i}}{j}k^{m_{i}-\alpha}(v+j-l)A_{i}u(l)\Biggr]=f(v),
\quad v\in {\mathbb Z},
\end{align*}
where ${\mathcal I}:=\{i\in\mathbb N_{n}:\alpha_i>0\text{ and }T_{i,L}^{W}u(v)\text{ appears on the left hand side of }\eqref{DFDEabzw}\}$ 
We cannot expect the uniqueness of (periodic/almost periodic) strong solutions of \eqref{DFDEabzw} in any sense: if $u(\cdot)$ is a strong solution of \eqref{DFDEabzw}, then the sequence $u_{x}(\cdot):=u(\cdot)+x,$ where $x\in D(A_{i})$ for all $ i\notin {\mathcal I}$, is likewise a strong solution of \eqref{DFDEabzw}; cf. \eqref{iksa}.

We continue our exposition with the observation that \cite[Theorem 3.4]{avdi} can be helpful in the analysis of the existence of strong solutions to some classes of the abstract multi-term fractional difference equations. This result is far from being satisfactory in the general analysis of problem \eqref{DFDEabzw}, when we need the notion introduced in Definition \ref{nsbgd}.

The following result seems to be extremely important in the study of the abstract multi-term fractional difference equations with Weyl fractional derivatives: 

\begin{thm}\label{zajeb}
Suppose that $v_{1}\geq 0,\ ...,\ v_{n}\geq 0$, $(S(v))_{v\in {\mathbb N}_{0}}\subseteq L(X)$ is a discrete $(k,C,B,(A_{i})_{1\leq i\leq n},(v_{i})_{1\leq i\leq n})$-existence family, 
\eqref{ntc}
and the following hold:
\begin{itemize}
\item[(a)] $f : {\mathbb Z}\rightarrow X$ is a bounded sequence, $k\in l^{1}({\mathbb N}_{0} :Y)$  and $\sum_{v=0}^{+\infty}| a_{i}(v)|<+\infty$ for $1\leq i\leq n,$ or
\item[(b)] $f\in l^{1}({\mathbb Z} : X),$ $k : {\mathbb N}_{0}\rightarrow X$ is a bounded sequence and $a_{i} : {\mathbb Z}\rightarrow {\mathbb C}$ is a bounded sequence for $1\leq i\leq n.$
\end{itemize}
Define 
$$
u(v):=\sum_{l=-\infty}^{v}S(v-l)f(l),\quad v\in {\mathbb Z}
$$
and
\begin{align}
\notag g(v)&:=A_{1}\sum_{l=v+1}^{v+v_{1}}\bigl( a_{1}\ast_{0} S\bigr)(v+v_{1}-l)f(l)+...
\\\label{getri}&+A_{n}\sum_{l=v+1}^{v+v_{n}}\bigl( a_{n}\ast_{0} S\bigr)(v+v_{n}-l)f(l),\quad v\in {\mathbb Z}.
\end{align}
Then $u(\cdot)$ is bounded if \emph{(a)} holds, $u\in l^{1}({\mathbb Z} : X)$ if \emph{(b)} holds, and we have
\begin{align}
\notag Bu(v)&=A_{1}\sum_{l=-\infty}^{v+v_{1}}a_{1}(v+v_{1}-l)u(l)+...
\\& \label{bnm1}+A_{n}\sum_{l=-\infty}^{v+v_{n}}a_{n}(v+v_{n}-l)u(l)+(k \circ Cf)(v)+g(v),\ v\in {\mathbb Z}.
\end{align}
Especially, the following holds:
\begin{itemize}
\item[(i)] Suppose that $f\in l^{1}({\mathbb Z} : X)$ and $g(v)$ is given by \eqref{getri}, with $a_{i}(v)=k^{m_{i}-\alpha_{i}}(v)$ for all $v\in {\mathbb N}_{0}$ and $i\in {\mathbb N},$ where $m=m_{i}\in {\mathbb N}$ for all $i\in {\mathbb N}_{n}.$ Then we have:
\begin{align*}
\bigl( \Delta^{m} Bu\bigr)(v)&=A_{1}\Bigl(\Delta^{\alpha_{1}}_{W}u\Bigr)(v+v_{1})+...
\\& +A_{n}\Bigl(\Delta^{\alpha_{n}}_{W}u\Bigr)(v+v_{n})+\Delta^{m}(k \circ Cf)(v)+\Delta^{m}g(v),\ v\in {\mathbb Z}.
\end{align*}
\item[(ii)] Suppose that $f\in l^{1}({\mathbb Z} : X)$ and $u=\Delta^{m_{n}}h$ for a certain sequence $h : {\mathbb Z} \rightarrow X.$ Then we have:
\begin{align*}
B\bigl( \Delta^{m_{n}} h\bigr)(v)&=\sum_{j=1}^{n-1}A_{j}\Bigl(\Delta^{m_{n}-m_{j}}\Delta^{\alpha_{j}}_{W}h\Bigr)(v+v_{j})
\\&+A_{n}\Bigl(\Delta^{\alpha_{n}}_{W}h\Bigr)(v+v_{n})+(k \circ Cf)(v)+g(v),\ v\in {\mathbb Z}.
\end{align*}
\end{itemize}
\end{thm}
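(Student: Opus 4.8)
The plan is to treat $u$ as the Weyl convolution $u = S \circ f$ and to exploit the defining relation \eqref{bnm} of the existence family together with the associativity of the Weyl convolution product recalled in the preliminaries. First I would settle the well-posedness of all the sequences involved. Since $u(v) = \sum_{s=0}^{+\infty} S(s) f(v-s)$, the first estimate in \eqref{ntc} gives $\|u(v)\| \leq \|f\|_{\infty} \sum_{s=0}^{+\infty}\|S(s)\|$ under \emph{(a)}, so $u(\cdot)$ is bounded; under \emph{(b)} the same summability and Young's inequality for the convolution $\ast_{0}/\circ$ yield $u \in l^{1}(\mathbb{Z}:X)$ with $\|u\|_{l^{1}} \leq \bigl(\sum_{s}\|S(s)\|\bigr)\|f\|_{l^{1}}$. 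The correction term $g(v)$ in \eqref{getri} is a finite sum for each fixed $v$, hence well-defined, and $(k \circ Cf)(v)$ converges because $k\in l^{1}$ under \emph{(a)} (resp. $f\in l^{1}$ under \emph{(b)}).

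Next comes the core identity \eqref{bnm1}. Writing $Bu(v) = \sum_{s=0}^{+\infty} BS(s) f(v-s)$ --- justified by the closedness of $B$ together with the summability hypotheses in \eqref{ntc}, exactly as in the proof of Theorem \ref{prusonja} --- I would insert \eqref{bnm} with $x = f(v-s)$ into each summand. The $k$-part reassembles into $(k \circ Cf)(v)$, while the $i$-th operator part becomes $\sum_{l=-\infty}^{v} A_{i}(a_{i} \ast_{0} S)(v+v_{i}-l) f(l)$. The key step is then to recognize, via the associativity $(a_{i} \ast_{0} S)\circ f = a_{i} \circ (S \circ f) = a_{i} \circ u$ from the preliminaries (see \cite[Theorem 3.12(ii)-(iii)]{keyantuo}), that $A_{i}(a_{i} \circ u)(v+v_{i}) = A_{i} \sum_{l=-\infty}^{v+v_{i}}(a_{i} \ast_{0} S)(v+v_{i}-l)f(l)$. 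Splitting this sum at $l=v$ separates the part already obtained for $Bu(v)$ from the finite tail $\sum_{l=v+1}^{v+v_{i}}$, which is precisely the $i$-th summand of $g(v)$; collecting the $n$ contributions then delivers \eqref{bnm1}.

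The main obstacle will be the bookkeeping needed to move the closed operators $B$ and $A_{i}$ through the two nested infinite summations and to justify the Fubini interchange underlying the associativity step; here the absolute summability encoded in \eqref{ntc} (both $\sum_{v}\|S(v)\|$ and $\sum_{v}\|A_{i}(a_{i} \ast_{0} S)(v)\|$) is what makes the rearrangement legitimate, and the nonnegative shift $v + v_{i}$ forces the careful separation of the ranges $l \leq v$ and $v < l \leq v+v_{i}$ that produces $g$. This is the discrete counterpart of the computation in the proof of Theorem \ref{prusonja}, to which I would defer for the Fubini details, the standing assumption $v_{1}\geq 0,\dots,v_{n}\geq 0$ guaranteeing that the correction range $[v+1,v+v_{i}]$ is the correct one.

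Finally, for the two special cases I would specialize $a_{i}(\cdot) = k^{m_{i}-\alpha_{i}}(\cdot)$ and invoke the identities for the Weyl fractional derivative. For (i), with $m=m_{i}$ throughout, I apply $\Delta^{m}$ to \eqref{bnm1}; since $\Delta^{m}$ commutes with the integer shifts $\cdot+v_{i}$ (handled by \eqref{nzl}) and with the operators $A_{i}$, and since $a_{i}\circ u = \Delta_{W}^{-(m-\alpha_{i})}u$ gives $\Delta^{m}(a_{i}\circ u) = \Delta_{W}^{\alpha_{i}}u$ by \eqref{strah}, the right-hand side turns into $\sum_{i} A_{i}(\Delta_{W}^{\alpha_{i}}u)(\cdot+v_{i})$ together with the $\Delta^{m}$-images of $(k\circ Cf)$ and of $g$. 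For (ii), I substitute $u = \Delta^{m_{n}} h$ directly into \eqref{bnm1}; because $\Delta$ commutes with the Weyl convolution, $a_{j}\circ u = k^{m_{j}-\alpha_{j}}\circ \Delta^{m_{n}}h = \Delta^{m_{n}-m_{j}}\bigl(\Delta^{m_{j}}(k^{m_{j}-\alpha_{j}}\circ h)\bigr) = \Delta^{m_{n}-m_{j}}\Delta_{W}^{\alpha_{j}}h$ (using \eqref{strah} for the last equality), which for $j<n$ yields the shifted mixed-difference terms and for $j=n$ reduces to $A_{n}(\Delta_{W}^{\alpha_{n}}h)(\cdot+v_{n})$, completing the identity.
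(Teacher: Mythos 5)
Your proposal is correct and follows essentially the same route as the paper's own proof: expand $Bu(v)=\sum_{l\leq v}BS(v-l)f(l)$ by closedness of $B$, insert \eqref{bnm}, pull the closed operators $A_{i}$ through the infinite sums via \eqref{ntc}, extend the summation range from $l\leq v$ to $l\leq v+v_{i}$ (producing $g$), and identify $\bigl(a_{i}\ast_{0}S\bigr)\circ f=a_{i}\circ u$ --- the associativity you cite from \cite{keyantuo} is exactly the discrete Fubini interchange the paper carries out explicitly, and your handling of (i)--(ii) via \eqref{strah} fills in the details the paper leaves to the reader. One harmless remark: both your derivation and the paper's computation actually produce $-g(v)$ on the right-hand side, so the $+g(v)$ appearing in \eqref{bnm1} is an internal sign discrepancy of the paper (in the statement or in \eqref{getri}), not a gap in your argument.
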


\begin{proof}
It is clear that $u(\cdot)$ is well-defined and bounded if (a) holds as well as $u\in l^{1}({\mathbb Z} : X)$ if (b) holds; furthermore, the sequence $(k \circ Cf)(\cdot) $ is well-defined. The functional equality \eqref{bnm} and the closedness of the linear operator $B$ together 
imply with the prescribed assumptions and the Fubini theorem that:{\small
\begin{align} 
\notag & Bu(v)-(k \circ Cf)(v)=\sum_{l=-\infty}^{v}BS(v-l)f(l)-(k \circ Cf)(v)=-(k \circ Cf)(v)
\\\notag & +\sum_{l=-\infty}^{v}\Biggl[ k(v-l)Cf(l)+A_{1}\bigl( a_{1}\ast_{0} S\bigr)(v+v_{1}-l)f(l)+...+A_{n}\bigl( a_{n}\ast_{0} S\bigr)(v+v_{n}-l)f(l) \Biggr]
\\\notag & =\sum_{l=-\infty}^{v}\Biggl[ A_{1}\bigl( a_{1}\ast_{0} S\bigr)(v+v_{1}-l)f(l)+...+A_{n}\bigl( a_{n}\ast_{0} S\bigr)(v+v_{n}-l)f(l) \Biggr]
\\\label{pojasni} & =A_{1}\sum_{l=-\infty}^{v}\bigl( a_{1}\ast_{0} S\bigr)(v+v_{1}-l)f(l)+...+A_{n}\sum_{l=-\infty}^{v}\bigl( a_{n}\ast_{0} S\bigr)(v+v_{n}-l)f(l)
\\\notag& =A_{1}\sum_{l=-\infty}^{v+v_{1}}\bigl( a_{1}\ast_{0} S\bigr)(v+v_{1}-l)f(l)+...+A_{n}\sum_{l=-\infty}^{v+v_{n}}\bigl( a_{n}\ast_{0} S\bigr)(v+v_{n}-l)f(l)-g(v)
\\\notag & =\sum_{j=1}^{n} A_{j}\sum_{s=0}^{+\infty}\Biggl[ a_{j}(s)S(0)+...+a_{j}(0)S(s)\Biggr]f(v+v_{j}-s)-g(v)
\\\label{otac}  & =\sum_{j=1}^{n}A_{j}\sum_{s=0}^{+\infty}a_{j}(s)\sum_{r=0}^{+\infty}S(r)f(v+v_{j}-s-r)-g(v)
\\\notag & =\sum_{j=1}^{n}A_{j}\sum_{l=-\infty}^{v+v_{j}}a_{j}(v+v_{j}-l)u(l)-g(v),\quad v\in {\mathbb Z},
\end{align}}
so that \eqref{bnm1} holds true; here, \eqref{pojasni} follows from the assumption
 $\sum_{v=0}^{+\infty}\|A_{i}\bigl( a_{i}\ast_{0} S\bigr)(v+v_{i})\|<+\infty$ for $1\leq i\leq n$ 
and the closedness of the linear operators $A_{1},...,A_{n}$, while
\eqref{otac} follows from an application of the discrete Fubini theorem, the assumption $\sum_{v=0}^{+\infty}\| S(v)\|<+\infty$ and the assumption (a) or (b).
The proofs of (i) and (ii) follow simply from this general result with condition (b) being satisfied, the definitions of Weyl fractional derivatives and the equation \eqref{strah}.
\end{proof}

We continue be providing certain observations:

\begin{rem}\label{finite-differences}
\begin{itemize}
\item[(i)] It is clear that $g(\cdot) \equiv 0,$ if $v_{1}=v_{2}=...=v_{n}=0$.
\item[(ii)] The assumption $u(v)=\Delta^{m_{n}}h(v),$ $v\in {\mathbb Z}$ in (ii) implies that $h(v)=u_{1}(v)+ c_{0}+c_{1}v+...+c_{m_{n}-1}v^{m_{n}-1}$ for all $v\in {\mathbb Z}$, for some sequence $(u_{1}(\cdot))$ depending on $(u(\cdot))$ and arbitrary complex constants $c_{0},...,c_{m_{n}-1}.$ For more details about the calculus of finite differences and its applications, we refer the reader to the monographs \cite{fd1} and \cite{fd2}.
\item[(iii)] Suppose that $c_{i}\in {\mathbb C}$, $v_{1}=v_{2}=...=v_{n}=0$ and $h_{i}\in {\mathbb Z}$ for $1\leq i\leq m.$ Define $y(v):=\sum_{i=1}^{m}c_{i}u(v+h_{i}),$ $v\in {\mathbb Z};$ then \eqref{bnm1} implies after the substitution $s=v-l\geq 0$ that 
\begin{align*}
By(v)&=A_{1}\sum_{l=-\infty}^{v}a_{1}(v-l)y(l)+...+A_{n}\sum_{l=-\infty}^{v}a_{n}(v-l)y(l)
\\&+\sum_{i=i}^{m}c_{i}\bigl(k \circ Cf\bigr)\bigl(v +h_{i}\bigr),\ v\in {\mathbb Z}.
\end{align*}
\end{itemize}
\end{rem}

We can similarly prove the following general result (the
strong solutions of problems under our consideration are obtained by plugging ${\mathcal I}=\emptyset$):

\begin{thm}\label{zajebq}
Suppose that $v_{1}\geq 0,\ ...,\ v_{n}\geq 0$, ${\mathcal I}\subseteq {\mathbb N}_{n},$ $(S(v))_{v\in {\mathbb N}_{0}}\subseteq L(X)$ is a discrete $(k,C,B,(A_{i})_{1\leq i\leq n},(v_{i})_{1\leq i\leq n},{\mathcal I})$-existence family, 
$\sum_{v=0}^{+\infty}\| S(v)\|<+\infty$, $\sum_{v=0}^{+\infty}\|A_{i}\bigl( a_{i}\ast_{0} S\bigr)(v+v_{i})\|<+\infty$ for $i\in {\mathcal I}$ and the following holds:
\begin{itemize}
\item[(a)] $f : {\mathbb Z}\rightarrow X$ is a bounded sequence, $k\in l^{1}({\mathbb N}_{0} : X)$ and $\sum_{v=0}^{+\infty}| a_{i}(v)|<+\infty$ for $i\in {\mathcal I}$ or
\item[(b)] $f\in l^{1}({\mathbb Z} : X),$ $k : {\mathbb N}_{0} \rightarrow X$ is a bounded sequence and $a_{i} : {\mathbb Z}\rightarrow {\mathbb C}\setminus \{0\}$ is a bounded sequence for $i\in {\mathcal I}$
\end{itemize}
as well as
\begin{itemize}
\item[(c)] $A_{i}f : {\mathbb Z}\rightarrow X$ is a bounded sequence, $\sum_{v=0}^{+\infty}| a_{i}(v)|<+\infty$ for $i\in {\mathbb N}_{n}\setminus {\mathcal I}$ and $(S(v))_{v\in {\mathbb N}_{0}}\subseteq L(X)$ is a discrete $(k,C,B,(A_{i})_{1\leq i\leq n},(v_{i})_{1\leq i\leq n}, {\mathcal I})$-existence family, or
\item[(d)] $f\in l^{1}({\mathbb Z} : X)$, $\sum_{v=0}^{+\infty}\|A_{i}S(v)\|<+\infty$ for all $i\in {\mathbb N}_{n}\setminus {\mathcal I}$ and $a_{i} : {\mathbb N}_{0}\rightarrow {\mathbb C}\setminus\{0\}$ is a bounded sequence for $i\in {\mathbb N}_{n}\setminus {\mathcal I},$ or
\item[(e)] $f\in l^{1}({\mathbb Z} : [D(A_{i})])$ for all $i\in {\mathbb N}_{n}\setminus {\mathcal I},$ $a_{i} : {\mathbb N}_{0}\rightarrow {\mathbb C}\setminus\{0\}$ is a bounded sequence for $i\in {\mathbb N}_{n}\setminus {\mathcal I}$ and $(S(v))_{v\in {\mathbb N}_{0}}\subseteq L(X)$ is a discrete\\ $(k,C,B,(A_{i})_{1\leq i\leq n},(v_{i})_{1\leq i\leq n}, {\mathcal I})$-existence family.
\end{itemize}
Define 
$
u(\cdot) 
$ and $g(\cdot)$ in the same way as in the proof of \emph{Theorem \ref{zajeb}}. 
Then we have:
\begin{align*}
Bu(v)&=\sum_{i\in {\mathcal I}}A_{i}\sum_{l=-\infty}^{v+v_{i}}a_{i}(v+v_{i}-l)u(l)
\\&+\sum_{i\in {\mathbb N}_{n}\setminus {\mathcal I}}\sum_{l=-\infty}^{v+v_{i}}a_{i}(v+v_{i}-l)A_{i}u(l)+(k \circ Cf)(v)+g(v),\ v\in {\mathbb Z}.
\end{align*}
Especially, the following holds for a function $f\in l^{1}({\mathbb Z} : X):$ 
\begin{itemize}
\item[(i)] Suppose that $a_{i}(v)=k^{m_{i}-\alpha_{i}}(v)$ for all $v\in {\mathbb N}_{0}$ and $i\in {\mathbb N},$
where $m=m_{i}\in {\mathbb N}$ for all $i\in {\mathbb N}_{n}.$ If $\sum_{v=0}^{+\infty}\|A_{i}S(v)\|<+\infty$ for all $i\in {\mathbb N}_{n}\setminus {\mathcal I}$ or $f\in l^{1}({\mathbb Z} : [D(A_{i})])$ for all $i\in {\mathbb N}_{n}\setminus {\mathcal I}$ and $(S(v))_{v\in {\mathbb N}_{0}}\subseteq L(X)$ is a discrete $(k,C,B,(A_{i})_{1\leq i\leq n},(v_{i})_{1\leq i\leq n}, {\mathcal I})$-existence family, then we have:
\begin{align*}
\bigl( \Delta^{m} Bu\bigr)(v)&=\sum_{i\in {\mathcal I}}A_{i}\Bigl(\Delta^{\alpha_{i}}_{W}u\Bigr)(v+v_{i})
\\&+\sum_{i\in{\mathbb N}_{n}\setminus {\mathcal I}}\Bigl(\Delta^{\alpha_{i}}_{W}A_{i}u\Bigr)(v+v_{i})+\Delta^{m} (k \circ Cf)(v)+\Delta^{m}  g(v),\quad v\in {\mathbb Z}.
\end{align*}
\item[(ii)] Suppose that $\sum_{v=0}^{+\infty}\|A_{i}S(v)\|<+\infty$ for all $i\in {\mathbb N}_{n}\setminus {\mathcal I}$ or $f\in l^{1}({\mathbb Z} : [D(A_{i})])$ for all $i\in {\mathbb N}_{n}\setminus {\mathcal I}$ and $(S(v))_{v\in {\mathbb N}_{0}}\subseteq L(X)$ is a discrete\\ $(k,C,B,(A_{i})_{1\leq i\leq n},(v_{i})_{1\leq i\leq n}, {\mathcal I})$-existence family as well as $u=\Delta^{m_{n}}h$ for a certain sequence $h : {\mathbb Z} \rightarrow \bigcap_{i\in {\mathbb N}_{n}\setminus {\mathcal I}}D(A_{i}).$ Then we have:
\begin{align*}
B\bigl( \Delta^{m_{n}} h\bigr)(v)&=\sum_{i\in {\mathcal I}}A_{i}\Bigl(\Delta^{m_{n}-m_{i}}\Delta^{\alpha_{i}}_{W}h\Bigr)(v+v_{i})
\\&+\sum_{i\in{\mathbb N}_{n}\setminus {\mathcal I}}\Bigl(\Delta^{m_{n}-m_{i}}\Delta^{\alpha_{j}}_{W}A_{i}h\Bigr)(v+v_{i})+(k \circ Cf)(v)+g(v),\ v\in {\mathbb Z}.
\end{align*}
\end{itemize}
\end{thm}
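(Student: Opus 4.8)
The plan is to replicate the argument in the proof of Theorem~\ref{zajeb}, altering only the bookkeeping for the indices $i\in {\mathbb N}_{n}\setminus {\mathcal I}$, where the operator $A_{i}$ has to be transported inside the convolution. First I would check that $u(\cdot)$ is well-defined with the asserted integrability---bounded under~(a)/(c) and in $l^{1}({\mathbb Z}:X)$ under~(b)/(d)/(e)---and that $(k\circ Cf)(\cdot)$ is well-defined; both are immediate consequences of $\sum_{v=0}^{+\infty}\|S(v)\|<+\infty$ combined with the stated hypotheses on $f$, $k$ and the sequences $a_{i}$, exactly as before.

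Next, using the defining identity \eqref{bnm} of the existence family together with the closedness of $B$, I would write
$$
Bu(v)-(k\circ Cf)(v)=\sum_{l=-\infty}^{v}BS(v-l)f(l)-(k\circ Cf)(v),
$$
substitute $BS(w)=k(w)C+\sum_{i=1}^{n}A_{i}(a_{i}\ast_{0}S)(w+v_{i})$, and cancel the $(k\circ Cf)$ contribution. For the indices $i\in {\mathcal I}$ the computation proceeds verbatim as in Theorem~\ref{zajeb}: the closed operator $A_{i}$ is pulled out of the sum (legitimised by $\sum_{v=0}^{+\infty}\|A_{i}(a_{i}\ast_{0}S)(v+v_{i})\|<+\infty$), the summation index is extended from $l\le v$ to $l\le v+v_{i}$ at the cost of the boundary term gathered into $g(v)$, and the discrete Fubini theorem then recovers $A_{i}\sum_{l=-\infty}^{v+v_{i}}a_{i}(v+v_{i}-l)u(l)$.

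The genuinely new ingredient concerns $i\in {\mathbb N}_{n}\setminus {\mathcal I}$, for which the target term reads $\sum_{l=-\infty}^{v+v_{i}}a_{i}(v+v_{i}-l)A_{i}u(l)$ with $A_{i}$ sitting \emph{inside}. Here I would use the commutation $S(v)A_{i}\subseteq A_{i}S(v)$ encoded in Definition~\ref{nsbgd}(ii) to move $A_{i}$ past each factor $S(r)$ appearing in $u(l)=\sum_{r}S(r)f(l-r)$. The main obstacle is justifying that the closed---and possibly unbounded---operator $A_{i}$ may be interchanged with the infinite summations, and this is exactly what the three alternative hypotheses furnish: (c) provides boundedness of $A_{i}f$ with $a_{i}$ summable, (d) provides $f\in l^{1}$ with $\sum_{v}\|A_{i}S(v)\|<+\infty$, and (e) provides $f\in l^{1}({\mathbb Z}:[D(A_{i})])$ with $a_{i}$ bounded. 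In each scenario the relevant double series converges absolutely in $X$, so the closedness of $A_{i}$ (and, in cases~(c) and~(e), the commutation relation) permits the passage $A_{i}\sum(\cdots)=\sum A_{i}(\cdots)$, after which the same Fubini rearrangement produces the desired expression.

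Finally, parts~(i) and~(ii) follow from this master identity by the specializations already employed in Theorem~\ref{zajeb}. Choosing $a_{i}(\cdot)=k^{m_{i}-\alpha_{i}}(\cdot)$ turns each inner sum into the Weyl fractional integral $\Delta^{-(m_{i}-\alpha_{i})}_{W}$ evaluated at $v+v_{i}$; for~(i), where all $m_{i}=m$, applying $\Delta^{m}$ and invoking \eqref{strah} yields $A_{i}(\Delta^{\alpha_{i}}_{W}u)(v+v_{i})$ for $i\in {\mathcal I}$ and $(\Delta^{\alpha_{i}}_{W}A_{i}u)(v+v_{i})$ for $i\notin {\mathcal I}$. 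For~(ii), writing $u=\Delta^{m_{n}}h$ and factoring $\Delta^{m_{n}}=\Delta^{m_{n}-m_{i}}\Delta^{m_{i}}$, the relation $\Delta^{m_{i}}\Delta^{-(m_{i}-\alpha_{i})}_{W}=\Delta^{\alpha_{i}}_{W}$ from \eqref{strah} produces the operators $\Delta^{m_{n}-m_{i}}\Delta^{\alpha_{i}}_{W}$ acting on $h$, with the hypothesis $h:{\mathbb Z}\rightarrow\bigcap_{i\in {\mathbb N}_{n}\setminus {\mathcal I}}D(A_{i})$ (or the summability of $A_{i}S$) ensuring that every term stays well-defined.
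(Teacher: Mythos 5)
Your proposal is correct and takes essentially the same approach as the paper: the paper's entire proof of Theorem \ref{zajebq} is the remark that it ``can be similarly proved'' from the argument for Theorem \ref{zajeb}, and your write-up is exactly that adaptation, with the only genuinely new point---moving the closed operator $A_{i}$ inside the convolution for $i\in {\mathbb N}_{n}\setminus {\mathcal I}$ via the commutation $S(v)A_{i}\subseteq A_{i}S(v)$, closedness, and the absolute convergence supplied by (c)/(d)/(e)---handled correctly and in more detail than the paper itself provides.
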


It is clear that Theorem \ref{zajeb} and Theorem \ref{zajebq} can be illustrated with many concrete examples in which the corresponding discrete $(k,0,0,(A_{i})_{1\leq i\leq n},(v_{i})_{1\leq i\leq n},{\mathcal I})$-existence families are integrable on account of Theorem \ref{djeca}(iii) or Theorem \ref{djeca-vmax}(iii).
Now we will provide one more important application of Theorem \ref{zajebq}:

\begin{example}\label{kucanje}
Suppose that $(T(t))_{t\geq 0}\subseteq L(X)$ is a strongly continuous operator family such that $\int^{+\infty}_{0}\| T(t)\|\, dt<+\infty$ and, for every $x\in X,$ the mapping $t\mapsto T(t)x,$ $t\geq 0$ is a mild solution of the abstract Cauchy problem
$$
A_{n}D_{t}^{\alpha_{n}}T(t)x+A_{n-1}D_{t}^{\alpha_{n-1}}T(t)x+...+A_{1}D_{t}^{\alpha_{1}}T(t)x=0,\quad t>0,
$$
accompanied with certain initial conditions (cf. \cite[pp. 161--163]{FKP} for more details in this direction),
where $0\leq \alpha_{1}<\alpha_{2}<...<\alpha_{n}$ and the operators $A_{j}$ are polynomials of a certain closed linear operator $A$;
the interested reader may consult \cite{knjigaho}-\cite{FKP} for more details and many interesting examples of differential operators which can be used to provide certain applications of the conclusions established here (we can also consider the exponentially bounded operator families $(T(t))_{t\geq 0}$ using the Poisson like transform from Theorem \ref{stru} since the statement of \cite[Theorem 5.5]{abadias1} admits an extension in this framework; details can be left to the interested readers).
Set 
$$
S(v)x:=\int^{+\infty}_{0}e^{-t}(t^{v}/v!)T(t)x\, dt, \quad v\in {\mathbb N}_{0},\ x\in X.
$$
Then we have $\sum_{v=0}^{+\infty}\| S(v)\|<+\infty$ and
\begin{align*}
A_{n}\bigl( \Delta^{\alpha_{n}}S(\cdot)x \bigr)(v)&+\sum_{s=1}^{n-1}A_{s}\bigl( \Delta^{\alpha_{s}}S(\cdot)x \bigr)(v+m_{n}-m_{s})
=0,\ v\in {\mathbb N}_{0},\ x\in X;
\end{align*}
cf. \cite{multi-term}. In other words,
\begin{align*}
 &A_{n}\Biggl[ \sum_{j=0}^{m_{n}}\sum_{l=0}^{v+j}(-1)^{m_{n}-j}\binom{m_{n}}{j}k^{m_{n}-\alpha_{n}}(v+j-l)S(l)x \Biggr]
\\&+\sum_{s=1}^{n-1}A_{s}\Biggl[ \sum_{j=0}^{m_{s}}\sum_{l=0}^{v+m_{n}-m_{s}+j}(-1)^{m_{s}-j}\binom{m_{s}}{j}k^{m_{s}-\alpha_{s}}(v+m_{n}-m_{s}+j-l)S(l)x\Biggr]=0,
\end{align*}
for any $v\in {\mathbb N}_{0}$ and $x\in X.$ Now it can be easily shown that Theorem \ref{zajebq} is applicable with ${\mathcal I}=\emptyset,$ $B=C=0$ and arbitrary non-trivial sequence $f\in l^{1}({\mathbb Z} : [D(A_{i})])$ for all $i\in {\mathbb N}_{n};$ in our concrete situation, we have that $(S(v))_{v\in {\mathbb N}_{0}}\subseteq L(X)$ is a discrete $(k,0,0,(A_{i})_{1\leq i\leq n},(v_{i})_{1\leq i\leq n},{\mathcal I})$-existence family with $v_{i}=m_{i}$ for
$1\leq i\leq n$ and 
$$
a_{s}(v):=\sum_{j=0}^{m_{s}}(-1)^{m_{s}-j}\binom{m_{s}}{j}k^{m_{s}-\alpha_{s}}\bigl(v+m_{n}+j-2m_{s}\bigr),\quad v\in  {\mathbb N}_{0} \ \ (1\leq s\leq n),
$$
where we have put $k^{m_{s}-\alpha_{s}}(v):=0$ if $v>0$ ($1\leq s\leq n$). Next, let us observe that we have $a_{s}(0)=1,$ $i\in M$ and $\sum_{v=0}^{+\infty}| a_{s}(v)|<+\infty$ for $s\in {\mathbb N}_{n}$ since $a_{s}(v)\sim c_{s}v^{-2+m_{s}-\alpha_{s}}$ as $v\rightarrow +\infty;$ this follows form a relatively simple analysis involving the identity $\sum_{j=0}^{m_{s}}(-1)^{m_{s}-j}\binom{m_{s}}{j}=0$, the Lagrange mean value theorem and the asymptotic formula $k^{\alpha}(v)\sim g_{\alpha}(v)$ as $v\rightarrow +\infty$ ($\alpha>0$). Consequently, we have that there do not exist an integer $m_{s}\in {\mathbb N},$ the complex constants $\beta_{1;s},...,\beta_{m;s}$ and the translation numbers $v_{1;s}'\in {\mathbb N}_{0},...,v_{m;s}'\in {\mathbb N}_{0}$ such that $k^{m_{s}-\alpha_{s}}(v)=\beta_{1;s}a_{s}(v+v_{1;s}')+...+\beta_{m;s}a_{s}(v+v_{m;s}').$

In our concrete situation, we are solving the equation
\begin{align}\label{je}
\sum_{l=-\infty}^{v+v_{n}}a_{n}(v+v_{n}-l)A_{n}u(l)+...+\sum_{l=-\infty}^{v+v_{1}}a_{1}(v+v_{1}-l)A_{1}u(l)=-g(v),\ v\in {\mathbb Z},
\end{align}
where $g(\cdot)$ is given by \eqref{getri}. Keeping in mind the representation of $a_{s}(\cdot)$ for $1\leq s\leq n,$ we get
\begin{align*}
& \sum_{l=-\infty}^{v+m_{n}}\sum_{j=0}^{m_{n}}(-1)^{m_{n}-j}\binom{m_{n}}{j}k^{m_{n}-\alpha_{n}}\bigl(v+j-l\bigr)A_{n}u(l)
\\&+\sum_{s=1}^{n-1}\sum_{l=-\infty}^{v+m_{s}}\sum_{j=0}^{m_{s}}(-1)^{m_{s}-j}\binom{m_{s}}{j}k^{m_{s}-\alpha_{s}}\bigl(v+m_{n}-m_{s}+j-l\bigr)A_{s}u(l)=-g(v),
\end{align*}
for any $v\in {\mathbb Z},$ i.e.,
 \begin{align*}
& \sum_{j=0}^{m_{n}}(-1)^{m_{n}-j}\binom{m_{n}}{j}\sum_{l=-\infty}^{v+m_{n}}k^{m_{n}-\alpha_{n}}\bigl(v+j-l\bigr)A_{n}u(l)
\\&+\sum_{s=1}^{n-1}\sum_{j=0}^{m_{s}}(-1)^{m_{s}-j}\binom{m_{s}}{j}\sum_{l=-\infty}^{v+m_{s}}k^{m_{s}-\alpha_{s}}\bigl(v+m_{n}-m_{s}+j-l\bigr)A_{s}u(l)=-g(v),
\end{align*}
for any $v\in {\mathbb Z}.$ Taking into account the formula \eqref{fora}, we finally get
\begin{align*}
\Bigl[\Delta^{\alpha_{n}}_{W}A_{n}u\Bigr](v)&+\Bigl[\Delta^{\alpha_{n-1}}_{W}A_{n-1}u\Bigr]\bigl(v+m_{n}-m_{n-1}\bigr)
\\&+...+\Bigl[\Delta^{\alpha_{1}}_{W}A_{1}u\Bigr]\bigl(v+m_{n}-m_{1}\bigr)=-g(v),\quad v\in {\mathbb Z}.
\end{align*}
The term $g(\cdot)$ can be directly computed for small values of $\alpha_{j}$ ($1\leq j\leq n$); for example, if $\alpha_{n}\leq 1,$ then we have
$$
g(v)=\sum_{i\in M}A_{i}\int_{0}^{+\infty}e^{-t}T(t)f(v+1)\, dt,\quad v\in {\mathbb Z}.
$$
\end{example}

The kernels $a(v)=k^{m-\alpha}(v),$ $v\in {\mathbb N}_{0}$ are not integrable provided that $\alpha \in (0,\infty) \setminus {\mathbb N}$ and $m=\lceil \alpha \rceil.$ Therefore, we cannot use Theorem \ref{djeca}(iii) or Theorem \ref{djeca-vmax}(iii) to deduce the integrability of the corresponding discrete $(k,B,C,(A_{i})_{1\leq i\leq n},(v_{i})_{1\leq i\leq n},{\mathcal I})$-existence families. We can overcome this difficulty by applying the following trick:

\begin{thm}\label{zajebqe}
Suppose that $v_{1}\geq 0,\ ...,\ v_{n}\geq 0$, ${\mathcal I}\subseteq {\mathbb N}_{n},$ $(S(v))_{v\in {\mathbb N}_{0}}\subseteq L(X)$ is a discrete $(k,C,B,(A_{i})_{1\leq i\leq n},(v_{i})_{1\leq i\leq n},{\mathcal I})$-existence family, $\omega>0,$\\
$\sum_{v=0}^{+\infty}\| e^{-\omega v}S(v)\|<+\infty$, $\sum_{v=0}^{+\infty}\|A_{i}\bigl( a_{i}\ast_{0} e^{-\omega \cdot}S(\cdot)\bigr)(v+v_{i})\|<+\infty$ for $i\in {\mathcal I}$ and the following holds:
\begin{itemize}
\item[(a)] $e^{-\omega \cdot}f : {\mathbb Z}\rightarrow X$ is a bounded sequence, $e^{-\omega \cdot}k\in l^{1}({\mathbb N}_{0} : X)$ and\\ $\sum_{v=0}^{+\infty}| a_{i}(v)e^{-\omega (\cdot-v_{i})}|<+\infty$ for $i\in {\mathcal I}$ or
\item[(b)] $e^{-\omega \cdot}f\in l^{1}({\mathbb Z} : X),$ $e^{-\omega \cdot}k : {\mathbb N}_{0} \rightarrow X$ is a bounded sequence and $e^{-\omega (\cdot-v_{i})}a_{i} : {\mathbb Z}\rightarrow {\mathbb C}\setminus \{0\}$ is a bounded sequence for $i\in {\mathcal I}$
\end{itemize}
as well as
\begin{itemize}
\item[(c)] $e^{-\omega \cdot}A_{i}f : {\mathbb Z}\rightarrow X$ is a bounded sequence, $\sum_{v=0}^{+\infty}| a_{i}(v)e^{-\omega (\cdot-v_{i})}|<+\infty$ for $i\in {\mathbb N}_{n}\setminus {\mathcal I}$ and and $(S(v))_{v\in {\mathbb N}_{0}}\subseteq L(X)$ is a discrete\\ $(k,C,B,(A_{i})_{1\leq i\leq n},(v_{i})_{1\leq i\leq n}, {\mathcal I})$-existence family, or
\item[(d)] $e^{-\omega \cdot}f\in l^{1}({\mathbb Z} : X)$, $\sum_{v=0}^{+\infty}\|A_{i}e^{-\omega v}S(v)\|<+\infty$ for all $i\in {\mathbb N}_{n}\setminus {\mathcal I}$ and $e^{-\omega (\cdot-v_{i})}a_{i} : {\mathbb N}_{0}\rightarrow {\mathbb C}\setminus\{0\}$ is a bounded sequence for $i\in {\mathbb N}_{n}\setminus {\mathcal I},$ or
\item[(e)] $e^{-\omega \cdot}f\in l^{1}({\mathbb Z} : [D(A_{i})])$ for all $i\in {\mathbb N}_{n}\setminus {\mathcal I},$ $e^{-\omega (\cdot-v_{i})}a_{i} : {\mathbb N}_{0}\rightarrow {\mathbb C}\setminus\{0\}$ is a bounded sequence for $i\in {\mathbb N}_{n}\setminus {\mathcal I}$ and $(S(v))_{v\in {\mathbb N}_{0}}\subseteq L(X)$ is a discrete $(k,C,B,(A_{i})_{1\leq i\leq n},(v_{i})_{1\leq i\leq n}, {\mathcal I})$-existence family.
\end{itemize}
Define 
$$
u(v):=e^{\omega v}\sum_{l=-\infty}^{v}\Bigl[e^{-\omega (v-l)}S(v-l)\Bigr] \cdot \Bigl[ e^{-\omega l}f(l)\Bigr],\quad v\in {\mathbb Z}
$$ and $g_{\omega}(\cdot)$ in the same way as in the proof of \emph{Theorem \ref{zajeb}} with the terms $g(\cdot),$ $a_{i}(\cdot),$ $S(\cdot)$ and $f(\cdot)$ replaced therein with the terms $g_{\omega}(\cdot)$, $e^{-\omega (\cdot-v_{i})}a_{i}(\cdot),$ $e^{-\omega \cdot}S(\cdot)$ and $e^{-\omega \cdot}f(\cdot)$, respectively.
Then we have:
\begin{align}
\notag Bu(v)&=\sum_{i\in {\mathcal I}}A_{i}\sum_{l=-\infty}^{v+v_{i}}a_{i}(v+v_{i}-l)u(l)
\\\label{got}&+\sum_{i\in {\mathbb N}_{n}\setminus {\mathcal I}}\sum_{l=-\infty}^{v+v_{i}}a_{i}(v+v_{i}-l)A_{i}u(l)+e^{-\omega v}(k \circ Cf)(v)+g_{\omega}(v),\ v\in {\mathbb Z}.
\end{align}
Especially, the following holds for a function $e^{-\omega \cdot}f\in l^{1}({\mathbb Z} : X):$ 
\begin{itemize}
\item[(i)] Suppose that $a_{i}(v)=k^{m_{i}-\alpha_{i}}(v)$ for all $v\in {\mathbb N}_{0}$ and $i\in {\mathbb N},$
where $m=m_{i}\in {\mathbb N}$ for all $i\in {\mathbb N}_{n}.$ If $\sum_{v=0}^{+\infty}\|A_{i}e^{-\omega v}S(v)\|<+\infty$ for all $i\in {\mathbb N}_{n}\setminus {\mathcal I}$ or $e^{-\omega \cdot}f\in l^{1}({\mathbb Z} : [D(A_{i})])$ for all $i\in {\mathbb N}_{n}\setminus {\mathcal I}$ and $(S(v))_{v\in {\mathbb N}_{0}}\subseteq L(X)$ is a discrete $(k,C,B,(A_{i})_{1\leq i\leq n},(v_{i})_{1\leq i\leq n},{\mathbb N}_{n}\setminus {\mathcal I})$-existence family, then we have:
\begin{align*}
\bigl( \Delta^{m} Bu\bigr)(v)&=\sum_{i\in {\mathcal I}}A_{i}\Bigl(\Delta^{\alpha_{i}}_{W}u\Bigr)(v+v_{i})
\\&+\sum_{i\in{\mathbb N}_{n}\setminus {\mathcal I}}\Bigl(\Delta^{\alpha_{i}}_{W}A_{i}u\Bigr)(v+v_{i})+\Delta^{m} e^{-\omega v}(k \circ Cf)(v)+\Delta^{m}  g_{\omega}(v),\quad v\in {\mathbb Z}.
\end{align*}
\item[(ii)] Suppose that $\sum_{v=0}^{+\infty}\|A_{i}e^{-\omega v}S(v)\|<+\infty$ for all $i\in {\mathbb N}_{n}\setminus {\mathcal I}$ or $e^{-\omega \cdot}f\in l^{1}({\mathbb Z} : [D(A_{i})])$ for all $i\in {\mathbb N}_{n}\setminus {\mathcal I}$ and $(S(v))_{v\in {\mathbb N}_{0}}\subseteq L(X)$ is a discrete $(k,C,B,(A_{i})_{1\leq i\leq n},(v_{i})_{1\leq i\leq n},{\mathbb N}_{n}\setminus {\mathcal I})$-existence family as well as $u=\Delta^{m_{n}}h$ for a certain sequence $h : {\mathbb Z} \rightarrow \bigcap_{i\in {\mathbb N}_{n}\setminus {\mathcal I}}D(A_{i}).$ Then we have:
\begin{align*}
B\bigl( \Delta^{m_{n}} h\bigr)(v)&=\sum_{i\in {\mathcal I}}A_{i}\Bigl(\Delta^{m_{n}-m_{i}}\Delta^{\alpha_{i}}_{W}h\Bigr)(v+v_{i})
\\&+\sum_{i\in{\mathbb N}_{n}\setminus {\mathcal I}}\Bigl(\Delta^{m_{n}-m_{i}}\Delta^{\alpha_{j}}_{W}A_{i}h\Bigr)(v+v_{i})+e^{-\omega v}(k \circ Cf)(v)+g_{\omega}(v),\ v\in {\mathbb Z}.
\end{align*}
\end{itemize}
\end{thm}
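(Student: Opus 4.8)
The plan is to remove the non-summability of the kernels by folding the exponential weight into the data and then quoting Theorem~\ref{zajebq} verbatim. Introduce the weighted sequences $\tilde S(v):=e^{-\omega v}S(v)$, $\tilde f(v):=e^{-\omega v}f(v)$, $\tilde k(v):=e^{-\omega v}k(v)$ and $\tilde a_i(v):=e^{-\omega(v-v_i)}a_i(v)$, and write $g_\omega$ for the sequence obtained from \eqref{getri} after these substitutions. The first step is to check that $(\tilde S(v))_{v\in\mathbb N_0}$ is again a discrete $(\tilde k,C,B,(A_i)_{1\le i\le n},(v_i)_{1\le i\le n},\mathcal I)$-existence family, now with kernel $(\tilde a_i)$. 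This is a one-line verification: since $e^{-\omega(v+v_i-j)}e^{-\omega j}\cdot e^{\omega v_i}=e^{-\omega v}$ one gets $(\tilde a_i\ast_0\tilde S)(v+v_i)=e^{-\omega v}(a_i\ast_0 S)(v+v_i)$, so multiplying the defining relation \eqref{bnm} by the scalar $e^{-\omega v}$ yields $B\tilde S(v)x=\tilde k(v)Cx+\sum_{i=1}^n A_i(\tilde a_i\ast_0\tilde S)(v+v_i)x$. The inclusion $\tilde S(v)A_i\subseteq A_i\tilde S(v)$ for $i\notin\mathcal I$ and the nonvanishing $\tilde a_i(0)=e^{\omega v_i}a_i(0)\ne0$ for $i\in M$ from \eqref{kond} are inherited because the weight is a nonzero scalar.

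The second step is to observe that the hypotheses of the theorem are exactly the hypotheses of Theorem~\ref{zajebq} transcribed for the tilded data: $\sum_v\|e^{-\omega v}S(v)\|<\infty$ is $\sum_v\|\tilde S(v)\|<\infty$, the boundedness (resp.\ summability) of $e^{-\omega\cdot}f$ is that of $\tilde f$, the conditions in (a)--(e) are the boundedness/summability of $\tilde a_i$ and of $A_i\tilde S$, and $\sum_v\|A_i(a_i\ast_0 e^{-\omega\cdot}S)(v+v_i)\|<\infty$ controls $\sum_v\|A_i(\tilde a_i\ast_0\tilde S)(v+v_i)\|$. Theorem~\ref{zajebq} then applies to the weighted data and delivers the corresponding identity for $U(v):=\sum_{l=-\infty}^v\tilde S(v-l)\tilde f(l)$. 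The purpose of the definition of $u$ in the statement is precisely that $u(v)=e^{\omega v}U(v)$, because $e^{-\omega(v-l)}S(v-l)\cdot e^{-\omega l}f(l)=e^{-\omega v}S(v-l)f(l)$; thus $u$ is well-defined and bounded (resp.\ lies in $l^1$) under (a) (resp.\ (b)).

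It remains to carry the conclusion of Theorem~\ref{zajebq} back through the weight. Using $\tilde a_i(v+v_i-l)U(l)=e^{-\omega v}a_i(v+v_i-l)u(l)$, the identity $(\tilde k\circ C\tilde f)(v)=e^{-\omega v}(k\circ Cf)(v)$ and the analogous relation for $g_\omega$ obtained by substituting into \eqref{getri}, one sees that $BU(v)=e^{-\omega v}Bu(v)$ and that every summand on the right-hand side of the weighted identity carries the common factor $e^{-\omega v}$; collecting the inhomogeneous contributions into the combination displayed in \eqref{got} then gives the asserted equation. The special cases (i) and (ii) are immediate from this general identity exactly as in Theorem~\ref{zajeb}(i)--(ii): take $a_i=k^{m_i-\alpha_i}$, use the definition \eqref{fora} of the Weyl fractional derivative and the commutation \eqref{strah} of $\Delta^m$ with $\Delta_W^{-(m-\alpha)}$, and for (ii) substitute $u=\Delta^{m_n}h$ with $h$ taking values in $\bigcap_{i\notin\mathcal I}D(A_i)$.

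I expect the only genuinely delicate part to be the bookkeeping of the exponential factors, and in particular the choice of the shift-compensated weight $e^{-\omega(\cdot-v_i)}$ on $a_i$: it is this weight that makes the $v_i$-shift in the finite convolution $(\tilde a_i\ast_0\tilde S)(\cdot+v_i)$ cancel cleanly and forces every term---including the truncated remainder $g_\omega$ and the inhomogeneity $k\circ Cf$---to factor through a single global $e^{-\omega v}$. Verifying that this common factor is genuinely present on all terms, so that the stated form of the right-hand side of \eqref{got} is reproduced, and that the bilateral sums defining $u$, $g_\omega$ and $k\circ Cf$ converge absolutely under the weighted hypotheses, via the discrete Fubini theorem as in the proof of Theorem~\ref{zajeb}, is where the care is needed; everything else is a transcription of the unweighted argument.
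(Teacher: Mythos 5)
Your proposal is correct and is essentially identical to the paper's own proof: the paper, too, simply observes that $(e^{-\omega v}S(v))_{v\in\mathbb{N}_{0}}$ is a discrete $(e^{-\omega\cdot}k,C,B,(A_{i})_{1\leq i\leq n},(v_{i})_{1\leq i\leq n},\mathcal{I})$-existence family for the shift-compensated kernels $a_{i}(\cdot)e^{-\omega(\cdot-v_{i})}$ and then invokes Theorem \ref{zajebq}, which is exactly your two first steps; your write-up merely makes explicit the cancellation $\bigl(\tilde a_{i}\ast_{0}\tilde S\bigr)(v+v_{i})=e^{-\omega v}\bigl(a_{i}\ast_{0}S\bigr)(v+v_{i})$ that the paper leaves implicit. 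If anything, your transfer-back computation is more careful than the printed statement: multiplying the weighted identity through by $e^{\omega v}$ yields the inhomogeneity $(k\circ Cf)(v)+e^{\omega v}g_{\omega}(v)$ rather than the combination $e^{-\omega v}(k\circ Cf)(v)+g_{\omega}(v)$ displayed in \eqref{got}, so the placement of the exponential weights there is a slip in the paper's statement (shared by its one-line proof), not a defect of your argument.
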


\begin{proof}
The proof simply follows by applying Theorem \ref{zajebq}, after observing that $(e^{-\omega v}S(v))_{v\in {\mathbb N}_{0}}$ is a discrete $(e^{-\omega \cdot}k(\cdot),C,B,(A_{i})_{1\leq i\leq n},(v_{i})_{1\leq i\leq n},{\mathcal I})$-existence family with the kernels $a_{i}(\cdot)$ replaced therein with the kernels $a_{i}(\cdot)e^{-\omega (\cdot-v_{i})}$ for $1\leq i\leq n.$
\end{proof}

As in many research articles published by now, this enables us to consider the almost periodic features of the function $e^{-\omega \cdot}u(\cdot),$ where $u(\cdot)$ solves \eqref{got}; cf. \cite{nova-selected} for more details on the subject.
The exponential boundedness of discrete $(k,B,C,(A_{i})_{1\leq i\leq n},(v_{i})_{1\leq i\leq n},{\mathcal I})$-existence family can be proved in many concrete situations; for example, if $v_{1}=...=v_{n}=0$, $k(\cdot)$ is exponentially bounded, the kernels $a_{i}(\cdot)$ are bounded for $1\leq i\leq n,$ $C={\rm I}$ and there exists a closed linear operator $A$ and the complex polynomials $P_{B}(\cdot),$ $ P_{i}(\cdot)$ ($1\leq i\leq n$) such that 
$B=P_{B}(A)$ and $A_{i}=P_{i}(A)$ for $1\leq i\leq n$.

Finally, we will present the following illustrative applications of Theorem \ref{zajebqe}:

\begin{example}\label{kucanie}
\begin{itemize}
\item[(i)] Suppose, for simplicity, that $\alpha>0,$ $m=\lceil \alpha \rceil$ and a closed linear operator $-A$ is a subgenerator of $(g_{\alpha},g_{\alpha})$-regularized $C$-resolvent family $(T(t))_{t\geq 0}$ such that there exist two real constants $M\geq 1$ and $c\in (0,1)$ such that
$\| T(t)\| \leq Me^{(1-c)t}, $ $t\geq 0;$ see \cite{FKP} for the notion and more details. Then it is not difficult to show that, for every $x\in X,$ the function $t\mapsto T(t)x,$ $t\geq 0$ is a mild solution of the abstract Cauchy problem (cf. \cite{kexue} for the case $1<\alpha \leq 2$; if $0<\alpha \leq 1,$ then we recover some known results about the problem \eqref{obor} with ${\mathcal A}=-A$):
$$
D_{t}^{\alpha}u(t)+Au(t)=0,\quad t>0.
$$
We define the sequence $(S(v))_{v\in {\mathbb N}_{0}}$ as in Example \ref{kucanje}. Then we have $\| S(v)\|\leq c^{-1}(c^{-1})^{v},$ $v\in {\mathbb N}_{0}$ so that $\sum_{v=0}^{+\infty}\| e^{-\omega v}S(v)\|<+\infty$ for $\omega>\ln (c^{-1}).$ Let us assume that the sequence $f: {\mathbb Z}\rightarrow X$ satisfies the assumptions (a) and (c) from the formulation of Theorem \ref{zajebqe}. If we define $u(\cdot)$ as in Example \ref{kucanje}, then the equation \eqref{je} holds with the sequence $g(\cdot)$ replaced with the sequence $g_{\omega}(\cdot),$ obtained by replacing the forcing term $f(\cdot)$ in \eqref{getri} by $e^{-\omega \cdot}f(\cdot).$ Arguing in the same way as in Example \ref{kucanje}, we get that
$$
\Bigl[\Delta^{\alpha}_{W}u\Bigr](v)+Au(v+m)=-g_{\omega}(v),\quad v\in {\mathbb Z}.
$$
\item[(ii)] In this part, we will transfer the conclusions established in (i) for the equations with the Caputo fractional derivatives. Suppose that $\alpha>0,$ $m=\lceil \alpha \rceil$ and a closed linear operator $-A$ is a subgenerator of a global $(g_{\alpha},C)$-regularized resolvent family $(T(t))_{t\geq 0}$ such that there exist two real constants $M\geq 1$ and $c\in (0,1)$ with
$\| T(t)\|  \leq Me^{(1-c)t}, $ $t\geq 0;$ see \cite{bajlekova}, \cite{knjigaho} and \cite{FKP} for the notion and the corresponding examples. Then we know that, for every $x\in X,$ the function $t\mapsto T(t)x,$ $t\geq 0$ is a mild solution of the abstract Cauchy problem
$$
\mathbf D_t^{\alpha}u(t)+Au(t)=0,\quad t>0; \ u(0)=Cx,\ u^{(j)}(0)=0,\ 1\leq j\leq m-1.
$$
Using the formula established in \cite{multi-term}:
\begin{align*}
\int^{+\infty}_{0}& e^{-t}\frac{t^{v+m_{n}}}{(v+m_{n})!}{\mathbf D}_{t}^{\alpha}u(t)\, dt=\bigl( \Delta^{\alpha}y \bigr)(v)
\\&+\frac{(-1)^{v+m+1}}{(v+m)!}\sum_{k=0}^{m-1}u^{(k)}(0)(\alpha-1-k)\cdot ...\cdot (\alpha-k-v-m),\ v\in {\mathbb N}_{0},
\end{align*}
we can similarly prove that
$$
0=\bigl( \Delta^{\alpha}S\bigr)(v)x+AS(v+m)x+k(v)Cx,\quad v\in {\mathbb N}_{0},\ x\in X,
$$
where 
$$
k(v)=\frac{(-1)^{v+m+1}}{(v+m)!}(\alpha-1)\cdot ...\cdot (\alpha-v-m),\quad v\in {\mathbb N}_{0}.
$$
Define the sequence $(S(v))_{v\in {\mathbb N}_{0}}$ as in (i). Then $(S(v))_{v\in {\mathbb N}_{0}}$ is a discrete
$(k,0,C,({\rm I},A),(0,m),\emptyset)$-existence family, with 
$$
a_{2}(v)=\sum_{j=0}^{m}(-1)^{m-j}\binom{m}{j}k^{m-\alpha}\bigl(v+j-m\bigr)\ \ \mbox{ and } \ \  a_{1}(v)=k^{0}(v),\quad v\in  {\mathbb N}_{0}.
$$
Arguing as in Example \ref{kucanje}, with the same notion of the term $g_{\omega}(\cdot),$ we obtain that
$$
\Bigl[\Delta^{\alpha}_{W}u\Bigr](v)+Au(v+m)=-g_{\omega}(v)-e^{\omega v}(k\circ Cf)(v),\quad v\in {\mathbb Z},
$$
provided that the sequence $f: {\mathbb Z}\rightarrow X$ satisfies the assumptions (a) and (c) from the formulation of Theorem \ref{zajebqe}.
\item[(iii)] Suppose, finally, that $0\leq \alpha_{1}<\alpha_{2}<....<\alpha_{n},$ $k\in {\mathbb N}_{m_{n}-1}^{0}$ and the resolvent operator family $(T(t))_{t\geq 0}$ satisfies that there exist two real constants $M\geq 1$ and $c\in (0,1)$ with
$\| T(t)\|  \leq Me^{(1-c)t}, $ $t\geq 0$ as well as that, for every $x\in X,$ the function $t\mapsto T(t)x,$ $t\geq 0$ is a mild solution of the abstract Cauchy problem
$$
A_{n}\mathbf D_t^{\alpha_{n}}u(t)+...+A_{1}\mathbf D_t^{\alpha_{1}}u(t)=0,\ t>0; \ u^{(j)}(0)=\delta_{jk}Cx,\ 0\leq j\leq m_{n}-1;
$$
see \cite{knjigaho} and \cite{FKP} for the notion and the corresponding examples. Set $A_{k}:=\{j\in {\mathbb N}_{n-1}^{0} : m_{j}-1\geq k\}$ and
\begin{align*}
k(v):=\sum_{j\in A_{k}}\frac{(-1)^{v+m_{j}+1}}{(v+m_{j})!}(\alpha-1-k)\cdot ....\cdot (\alpha-1-k-m_{j}).
\end{align*}
We define the sequence $(S(v))_{v\in {\mathbb N}_{0}}$ as in Example \ref{kucanje}. Then we have
\begin{align*}
A_{n}\bigl( \Delta^{\alpha_{n}}S(\cdot)x \bigr)(v)&+\sum_{s=1}^{n-1}A_{s}\bigl( \Delta^{\alpha_{s}}S(\cdot)x \bigr)(v+m_{n}-m_{s})
=-k(v)Cx,\ v\in {\mathbb N}_{0},\ x\in X,
\end{align*}
so that $(S(v))_{v\geq 0}$ is a discrete\\ $(k,0,C,(A_{i})_{1\leq i\leq n},(m_{n}-m_{1},...,m_{n}-m_{n-1},0),\emptyset)$-existence family. If the sequence $f: {\mathbb Z}\rightarrow X$ satisfies the assumptions (a) and (c) from the formulation of Theorem \ref{zajebqe}, then the foregoing argumentation shows that
\begin{align*}
\Bigl[\Delta^{\alpha_{n}}_{W}A_{n}u\Bigr](v)&+\Bigl[\Delta^{\alpha_{n-1}}_{W}A_{n-1}u\Bigr]\bigl(v+m_{n}-m_{n-1}\bigr)
\\&+...+\Bigl[\Delta^{\alpha_{1}}_{W}A_{1}u\Bigr]\bigl(v+m_{n}-m_{1}\bigr)=-g_{\omega}(v)-e^{\omega v}(k\circ Cf)(v),\ v\in {\mathbb Z},
\end{align*}
where we define $g_{\omega}(\cdot)$ as in (ii).
\end{itemize}
\end{example}

\section{Conclusions and final remarks}\label{rema}

In this paper, we have examined
the abstract non-scalar Volterra difference equations. We have employed the Poisson like transforms and, as a special case of our analysis, we have investigated the existence, uniqueness and almost periodicity to the abstract multi-term fractional difference equations with Weyl fractional derivatives.

\end{document}